\documentclass[preprint,12pt]{elsarticle}

\usepackage{epsfig}
\usepackage{epstopdf}
\usepackage{databib}
\usepackage{amssymb}
\usepackage{amsmath}
\usepackage{amsthm}
\usepackage{amsfonts}
\usepackage{CJK}
\usepackage{pifont}
\usepackage{natbib}
\usepackage{geometry}
\usepackage{fleqn}
\usepackage{graphicx}
\usepackage{txfonts}
\usepackage[colorlinks,linkcolor=red,anchorcolor=blue,citecolor=blue]{hyperref}
\usepackage{lineno,mathrsfs,verbatim,bm,enumerate,caption,float,CJK}

\newtheorem{theorem}{Theorem}[section]
\newtheorem{remark}{Remark}[section]
\newtheorem{lemma}{Lemma}[section]
\newtheorem{example}{Example}[section]
\newtheorem{corollary}{Corollary}[section]
\begin{document}

\begin{frontmatter}

\title{Optimal asymptotic analyses on Laguerre and Hermite orthogonal approximation for functions of algebraic and logarithmic regularities\tnoteref{mytitlenote}}
\tnotetext[mytitlenote]{This work was supported by the National Natural Science Foundation of China (No. 12271528), the Natural Science Foundation of Jiangsu Province (BK20241087).}

\author[mymainaddress1,mymainaddress2]{Yali Zhang}
\ead{ylzhang\_math@163.com}

\author[mymainaddress3]{Guidong Liu}
\ead{liugd@nau.edu.cn}

\author[mymainaddress2]{Shuhuang Xiang\corref{mycorrespondingauthor}}
\cortext[mycorrespondingauthor]{Corresponding author}
\ead{xiangsh@csu.edu.cn}

\address[mymainaddress1]{School of Mathematics and Statistics, Yancheng Teachers University, Yancheng 224000,  P. R. China}

\address[mymainaddress2]{School of Mathematics and Statistics, Central South University, Changsha 410083,  P. R. China}

\address[mymainaddress3]{School of Mathematics, Nanjing Audit University, Nanjing 211815,  P. R. China}

\begin{abstract}
Based on the Hilb-type formula and van der Corput-type lemmas, we present optimal asymptotic estimates for the decay of the Laguerre and Hermite coefficients for functions with algebraic and logarithmic singularities, which in turn yield the convergence rates of the corresponding spectral orthogonal projections. Numerous examples are provided to verify the optimality of these asymptotic results.
\end{abstract}

\begin{keyword}
 van der Corput-type lemmas \sep Laguerre approximation \sep Hermite approximation \sep  Asymptotic analysis \sep Convergence rate
\MSC[2010] 41A10 \sep 41A25
\end{keyword}

\end{frontmatter}


\section{Introduction}
The approximation of functions {using} orthogonal polynomials plays a fundamental role in numerical analysis and scientific computing, particularly in spectral methods for solving differential equations, Gaussian quadrature rules, and {expansions in special functions} \cite{Szego,Shen,chen2023log,Guo,Xiang,Xiang2012,gil2018asymptotic}. Among the various families of orthogonal polynomials, Laguerre and Hermite polynomials are of particularly significance for representing functions over semi-infinite and infinite domains, respectively. Their effectiveness in function approximation has led to applications in various fields, including quantum mechanics, signal processing and numerical solutions of partial differential equations \cite{fatyanov2011high,terekhov2013fast,mikhailenko1999spectral,mastryukov2008numerical,
colton1984analytic,Javier,funaro1991approximation,schumer1998vlasov}. {Research on these polynomials has yielded many new and important results \cite{agarwal2017certain,prajapati2014some,avazzadeh2023optimal,hassani2023study,avazzadeh2023optimization}}.

A prominent advantage of orthogonal polynomial approximations is their well-defined orthogonality properties, which facilitate stable numerical computations and rapid convergence. The rate of convergence is primarily determined by the regularity of the function being approximated. Classical results in spectral approximation theory focus on analytic or highly smooth functions, for which the expansion coefficients decay rapidly, {leading to exponential convergence in appropriate norms}. For example, Elliott and Tuan \cite{elliott1974asymptotic} established contour integral representations for the Laguerre and Hermite coefficients of analytic functions. Wang in \cite{wang2024convergence}, and Wang and Zhang in \cite{wang2023convergence} investigated the root-exponential convergence of Laguerre and Hermite approximations for such functions. For functions with lower regularity, Shen and Wang \cite{shen2009some} and Xiang \cite{Xiang2012} analyzed the algebraic convergence rates {of order} $\mathcal{O}(n^{-s})$, where $s$ is a parameter that depends on the regularity of the underlying functions. However, these results become suboptimal when the regularity parameter $s$ is non-integer. More recently, Zhang et al. \cite{Zhang,Zhang2024} investigated optimal pointwise estimates for Laguerre approximations by a Hilb-type asymptotic formula for Laguerre polynomials.

In this paper, we {study} the asymptotic behavior of Laguerre and Hermite expansion coefficients for functions with endpoint or interior singularities. We {first} derive several asymptotic estimates for integrals involving highly oscillatory Bessel functions. Following Hilb-type formulas, these estimates are subsequently applied to integrals involving Laguerre and Hermite polynomials. By {employing} Rodrigues' formulas, we then establish the decay rates of the Laguerre and Hermite coefficients for functions with endpoint or interior singularities. It is important to emphasize that all of the derived estimates are optimal in the sense that the convergence rates cannot be improved, which will be {confirmed} by ample numerical experiments. Finally, these results are used to characterize the decay of the corresponding Laguerre and Hermite spectral orthogonal projections.

The structure of this paper is as follows. In Section 2, we review the fundamental properties of Laguerre polynomials and Hermite polynomials. In Section 3, we derive the generalized van der Corput-type Lemmas for Bessel transforms. Sections 4 and 5 present the optimal decay rates of the Laguerre and Hermite coefficients, respectively, as well as the convergence rates of the corresponding spectral orthogonal projections for functions with algebraic and logarithmic singularities. Finally, Section 6 provides a brief summary of the main results and conclusions.

\section{Preliminaries}
\subsection{Gneralized Laguerre polynomials}
The generalized Laguerre polynomials $L_{n}^{(\alpha)}(x)$ form an orthogonal basis on the semi-infinite interval $[0,+\infty)$ with respect to the weight function $\omega_{\alpha}(x)=x^{\alpha}{\rm e}^{-x}$, where $\alpha>-1$. Specifically, they satisfy the orthogonality property
\begin{align}\label{eq1}
  \int_{0}^{+\infty}\omega_{\alpha}(x)L_{n}^{(\alpha)}(x)L_{m}^{(\alpha)}(x)\,\mathrm{d}x=\sigma_{n}^{(\alpha)}
  \delta_{m,n},\quad \sigma_n^{(\alpha)}=\frac{\Gamma(n+\alpha+1)}{n!},
\end{align}
where $\delta_{m,n}$ denotes the Kronecker delta symbol.

Let $f(x)$ be a suitably smooth function on $[0,\infty)$.
Then it can be expanded in a Laguerre series \cite[p. 265]{Shen} as
\begin{align*}
   f(x) =\sum_{n=0}^{\infty} a_n (\alpha) L_{n}^{(\alpha)}(x),
\end{align*}
where the coefficients $a_n(\alpha)$ are given by
\begin{align}\label{eq3}
 a_n (\alpha)=\frac{1}{\sigma_n^{(\alpha)}} \int_0^{\infty}  f(x) L_{n}^{(\alpha)}(x)\omega_{\alpha}(x) \mathrm{d}x.
\end{align}
{A practical approximation of} $f(x)$ is obtained by truncating the Laguerre series after the first $N+1$-terms
\begin{align*}
   S_N^{(\alpha)}[f](x) =\sum_{n=0}^{N} a_n (\alpha) L_{n}^{(\alpha)}(x).
\end{align*}
The {corresponding} approximation error in the weighted $L^2$-norm {with respect to} $\omega_{\alpha}(x)$ is given by
\begin{align}\label{eq4}
\|f(x)-S_N^{(\alpha)}[f](x)\|_{L_{\omega_{\alpha}}^2[0,\infty)}
=\sqrt{\sum_{n=N+1}^{\infty} a_n^2 (\alpha) \sigma_{n}^{(\alpha)}}.
\end{align}

\subsection{Hermite orthogonal polynomials}
The Hermite polynomials $H_{n}(x)$ form an orthogonal basis on the entire real line $(-\infty,+\infty)$ with respect to the Gaussian weight function $\omega(x)=\mathrm{e}^{-x^{2}}$. They are widely used in probability theory, quantum mechanics and signal processing. For a function $g(x)$ defined on $(-\infty,+\infty)$, the Hermite series expansion takes the form \cite[p. 270]{Shen}
\begin{align*}
{g(x)} =\sum_{n=0}^{\infty} h_n H_{n}(x),
\end{align*}
where the Hermite coefficients $h_{n}$ are defined as
\begin{align}\label{eq6}
   h_n=\frac{1}{\gamma_n} \int_{-\infty}^{+\infty} g(x) H_{n}(x) \omega(x)\mathrm{d}x,
   \quad \gamma_n=\sqrt{\pi} 2^n n!.
\end{align}

The $N$th partial sum of the Hermite series provides an approximation to $g(x)$
\begin{align*}
  S_N[g](x) =\sum_{n=0}^{N} h_n H_{n}(x).
\end{align*}
{with the corresponding approximation error in the weighted $L^2$-norm with respect to $\omega(x)$ given by}
\begin{align}\label{eq7}
\|g(x)-S_N[g](x)\|_{L_{\omega}^2(-\infty,\infty)}
=\sqrt{\sum_{n=N+1}^{\infty} h_n^2  \gamma_{n}}.
\end{align}

Moreover, Hermite polynomials are closely connected to generalized Laguerre polynomials. In particular, they satisfy the following identities (see \cite{Szego,Shen})
\begin{align}\label{eq8}
\aligned
&H_{2n}(x)=(-1)^n 2^{2n} n! L_n^{(-1/2)}(x^2),
\\&H_{2n+1}(x)=(-1)^n 2^{2n+1} n! x L_n^{(1/2)}(x^2),
\endaligned
\end{align}
where $L_n^{(-1/2)}(x^2)$ and $L_n^{(1/2)}(x^2)$ are generalized Laguerre polynomials of degree $n$ {with parameters} $\alpha=-1/2$ and $1/2$, respectively. These relations illustrate the structural interplay between Hermite and Laguerre systems, which is of particular interest in asymptotic and approximation analyses.

\subsection{The asymptotic properties of Laguerre and Hermite polynomials}
From an asymptotic point of view as $n\to\infty$, the generalized Laguerre {polynomials exhibit} a close {connection with the} Bessel function of the first kind. {This relationship is formalized by the following Hilb-type asymptotic formula}.
\begin{lemma}{\rm (Asymptotic formula of Hilb's type \cite[Theorem 8.22.4]{Szego})} \label{lemma2.1}
Let $\alpha>-1$ and define $\tilde{n}=n+(\alpha+1)/2$. As $n\to\infty$, the generalized Laguerre polynomial satisfies
\begin{align}\label{eq9}
\aligned
{\rm e}^{-x/2} x^{\alpha/2} L_{n}^{(\alpha)}(x)&=\frac{\tilde{n}^{-\alpha/2}\Gamma(n+\alpha+1)}{n!}
J_{\alpha}\big(2\sqrt{\tilde{n}x}\big)
+\left\{
\aligned
&  x^{5/4}\mathcal{O}\Big(n^{\frac{\alpha}{2}-\frac{3}{4}}\Big), & c n^{-1} \leq x \leq \omega,
\\& x^{\alpha/2+2}\mathcal{O}(n^{\alpha}), &0< x \leq c n^{-1},
\endaligned \right.
\endaligned
\end{align}
where $c$ and $\omega$ are fixed positive constants, and $J_{\alpha}(z)$ denotes the Bessel function of the first kind of order $\alpha$. The bounds above hold uniformly for $0<x\leq \omega$. In the special case $\alpha=0$, the last bound in \eqref{eq9} is to be replaced by $x^2 \log\big(x^{-1}n^{-1}\big)$.
\end{lemma}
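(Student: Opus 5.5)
The plan is to follow the classical route in Szeg\H{o}'s proof of Theorem 8.22.4: represent $u(x)={\rm e}^{-x/2}x^{\alpha/2}L_n^{(\alpha)}(x)$ as the solution of a perturbed Bessel equation and convert that equation into a Volterra integral equation whose leading term is the Bessel function.

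\textbf{Step 1: the differential equation.} From the Laguerre differential equation $xy''+(\alpha+1-x)y'+ny=0$, the function $u$ satisfies
\begin{align*}
xu''+u'+\Big(\tilde n-\frac{x}{4}-\frac{\alpha^2}{4x}\Big)u=0,\qquad \tilde n=n+\frac{\alpha+1}{2}.
\end{align*}
Substituting $x=t^2$ and $v(t)=u(t^2)$ gives
\begin{align*}
v''+t^{-1}v'+\big(4\tilde n-\alpha^2t^{-2}\big)v=t^2v.
\end{align*}
This is Bessel's equation of order $\alpha$ with argument $2\sqrt{\tilde n}\,t$, plus the small inhomogeneity $t^2v$.

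\textbf{Step 2: the Volterra equation.} Variation of constants with the pair $J_\alpha(2\sqrt{\tilde n}t)$ and $Y_\alpha(2\sqrt{\tilde n}t)$ gives
\begin{align*}
v(t)=C\,J_\alpha(2\sqrt{\tilde n}t)+\frac{\pi}{2}\int_0^t s^3\big[J_\alpha(2\sqrt{\tilde n}t)Y_\alpha(2\sqrt{\tilde n}s)-J_\alpha(2\sqrt{\tilde n}s)Y_\alpha(2\sqrt{\tilde n}t)\big]v(s)\,\mathrm{d}s.
\end{align*}
The constant $C$ is fixed by comparing the behaviour as $t\to0$. There $u(x)\sim x^{\alpha/2}L_n^{(\alpha)}(0)=x^{\alpha/2}\Gamma(n+\alpha+1)/(n!\,\Gamma(\alpha+1))$ and $J_\alpha(2\sqrt{\tilde n x})\sim \tilde n^{\alpha/2}x^{\alpha/2}/\Gamma(\alpha+1)$. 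This yields $C=\tilde n^{-\alpha/2}\Gamma(n+\alpha+1)/n!$, exactly the leading coefficient in \eqref{eq9}. We will need $\alpha>-1$ so that the integral converges at $s=0$ and the $Y_\alpha$-branch is excluded.

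\textbf{Step 3: estimating the remainder.} We use the standard bounds for the kernel.
\begin{itemize}
\item For $z=2\sqrt{\tilde n}t\ge c$: $|J_\alpha(z)|,|Y_\alpha(z)|\lesssim z^{-1/2}$.
\item For $z\le c$: $J_\alpha(z)\asymp z^\alpha$ and $Y_\alpha(z)\asymp z^{-|\alpha|}$, with a logarithm when $\alpha=0$.
\end{itemize}
Since $C=\mathcal{O}(n^{\alpha/2})$, a Gronwall-type argument first gives an a priori bound on $v$. In the oscillatory zone this bound is $|v|\lesssim n^{\alpha/2-1/4}t^{-1/2}$; in the small zone it is $|v|\lesssim n^{\alpha}t^{\alpha}$. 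Inserting these into the integral gives the two error regimes.

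In the small zone $x\le c/n$, the integrand is of size $s^3\cdot n^\alpha s^{2\alpha}\cdot t^\alpha s^{-\alpha}$, where the factors are $s^3$, the a priori bound on $v(s)^{\phantom{|}}$, and the kernel. Integrating produces $n^\alpha t^{\alpha+4}=n^\alpha x^{\alpha/2+2}$. For $\alpha=0$ the $Y_0$ logarithm leaves the factor $\log(1/(nx))$.

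In the oscillatory zone $c/n\le x\le\omega$, the kernel is $\mathcal{O}(n^{-1/2}(ts)^{-1/2})$. Together with $|v(s)|\lesssim n^{\alpha/2-1/4}s^{-1/2}$ it gives a contribution
\begin{align*}
\int_0^t s^3\,n^{-1/2}t^{-1/2}s^{-1/2}\,n^{\alpha/2-1/4}s^{-1/2}\,\mathrm{d}s\lesssim n^{\alpha/2-3/4}t^{5/2}=x^{5/4}\,\mathcal{O}\big(n^{\alpha/2-3/4}\big).
\end{align*}
The part of the integral over $s\le c/\sqrt n$ is handled with the small-argument bounds and is of the same or smaller order.

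\textbf{Main obstacle.} The delicate step is the a priori bound on $v$ uniformly on $[0,\sqrt\omega]$, that is, closing the Gronwall argument so the constants are independent of $n$. Because $\omega$ is fixed, the kernel weight $\int_0^{\sqrt\omega}s^3(\cdots)\,\mathrm{d}s$ gives only a bounded multiplicative factor, so Gronwall does close. The remaining work is bookkeeping to match the regimes at the transition $s\approx n^{-1/2}$, together with separate handling of the logarithm for $\alpha=0$ and of the sign cases of $\alpha$ in the $Y_\alpha$ bound.
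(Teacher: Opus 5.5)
Your plan is the classical Volterra-equation argument behind Szeg\H{o}'s Theorem 8.22.4, which the paper only quotes without proof. Most of it checks out: the equation for $v$, the value of $C$, the Gronwall closure, and the oscillatory-zone estimate (with $\kappa=2\sqrt{\tilde n}$, the piece $s\le 1/\kappa$ has relative size $\mathcal{O}((nx)^{-3/2})=\mathcal{O}(1)$). There are two harmless slips. First, since $W\{J_\alpha,Y_\alpha\}(z)=2/(\pi z)$, the bracket in your kernel should be $J_\alpha(\kappa s)Y_\alpha(\kappa t)-J_\alpha(\kappa t)Y_\alpha(\kappa s)$. Second, in the small-zone integrand, $n^\alpha s^{2\alpha}$ should read $n^\alpha s^{\alpha}$; as written, the integral gives $t^{2\alpha+4}$, not $t^{\alpha+4}$.

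The step that does not go through as written is the zone $0<x\le c/n$ for $-1<\alpha<0$, which you treat as bookkeeping. There $Y_\alpha(z)=\cot(\alpha\pi)J_\alpha(z)-\csc(\alpha\pi)J_{-\alpha}(z)$ blows up like $z^{\alpha}$, just as $J_\alpha$ does. Bounding the two products in the kernel separately therefore only gives a kernel of size $\kappa^{2\alpha}(st)^{\alpha}$, and hence
\[
|v(t)-CJ_\alpha(\kappa t)|\lesssim n^{2\alpha}t^{3\alpha+4}=n^{\alpha}x^{\alpha/2+2}\,(nx)^{\alpha}.
\]
This misses the claimed bound by the factor $(nx)^{\alpha}$, which tends to $\infty$ as $nx\to0$.

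What is needed is the exact cancellation of the $\cot(\alpha\pi)$ terms. The kernel $\frac{\pi}{2}\big[J_\alpha(\kappa s)Y_\alpha(\kappa t)-J_\alpha(\kappa t)Y_\alpha(\kappa s)\big]$ equals
\[
-\frac{\pi}{2\sin\alpha\pi}\big[J_\alpha(\kappa s)J_{-\alpha}(\kappa t)-J_\alpha(\kappa t)J_{-\alpha}(\kappa s)\big].
\]
In other words, for non-integer $\alpha$ you should use $J_{\pm\alpha}$ as the fundamental pair. This form is $\mathcal{O}\big((t/s)^{|\alpha|}\big)$ for $s\le t\le 1/\kappa$, and then $\int_0^t (t/s)^{|\alpha|}s^3\,n^\alpha s^\alpha\,\mathrm{d}s=n^\alpha t^{\alpha+4}/(4+2\alpha)$, as required.

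The same $J_{\pm\alpha}$ form is also what actually excludes the second solution when $\alpha<0$. Matching the leading $t^\alpha$ terms cannot separate $J_\alpha$ from $Y_\alpha$. However, a $J_{-\alpha}(\kappa t)\sim t^{-\alpha}$ component would lie strictly between $t^{\alpha}$ and $t^{\alpha+2}$; this is where $\alpha>-1$ enters. Meanwhile, $v$ minus the Volterra integral equals $L_n^{(\alpha)}(0)\,t^{\alpha}+\mathcal{O}(t^{\alpha+2})$. So that component must vanish, and the $t^\alpha$ coefficient then gives $C$.
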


In addition to their asymptotic representations, the maximum behavior of Laguerre and Hermite polynomials is {of significant interest} in approximation theory. The following results provide sharp estimates for their maximum magnitudes in weighted forms.

\begin{lemma}{\rm(\cite[Theorem 8.91.2]{Szego})}\label{lemma2.2}
Let $\alpha,\lambda\in\mathbb{R}$, $a>0$ and $0<\eta<4$. As $n\to\infty$, the generalized Laguerre polynomial satisfies
\begin{align}\label{eq10}
\max_{x\geq a} {\rm e}^{-x/2} x^{\lambda} \left|L_{n}^{(\alpha)}(x)\right|
=\left\{\begin{aligned}
&\mathcal{O}\Big(n^{\max\{\lambda-1/2,\alpha/2-1/4\}}\Big),\quad &&\mathrm{if }\, a\leq x\leq(4-\eta)n,\\
&\mathcal{O}\Big(n^{\max\{\lambda-1/3,\alpha/2-1/4\}}\Big),\quad &&\mathrm{if }\, x\geq a.\\
\end{aligned}\right.
\end{align}
\end{lemma}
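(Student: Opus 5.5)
This is a classical result (Szegő, Theorem 8.91.2), so the proof I would give follows Szegő's approach. It combines the uniform asymptotic descriptions of $L_n^{(\alpha)}(x)$ in the different regimes of $x$ and maximizes each against the weight $x^{\lambda}$. Throughout set $\nu=4n+2\alpha+2$, so the turning point sits at $x\approx\nu$. I would split $[a,\infty)$ into four ranges:
\[
a\le x\le (4-\eta)n,\qquad (4-\eta)n\le x\le \nu-\nu^{1/3},\qquad |x-\nu|\le \nu^{1/3},\qquad x\ge \nu+\nu^{1/3}.
\]
On the first range I would use the Plancherel–Rotach oscillatory (or, for bounded $x$, Fejér-type) asymptotics,
\[
{\rm e}^{-x/2}L_n^{(\alpha)}(x)=\pi^{-1/2}x^{-\alpha/2-1/4}n^{\alpha/2-1/4}\bigl(\cos(\cdots)+\mathcal O(n^{-1/2})\bigr),
\]
which is uniform for $a\le x\le (4-\eta)n$. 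This gives the bound ${\rm e}^{-x/2}x^{\lambda}|L_n^{(\alpha)}(x)|=\mathcal O(x^{\lambda-\alpha/2-1/4}n^{\alpha/2-1/4})$.

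Maximizing the power of $x$ over $a\le x\le cn$ is then elementary:
\begin{itemize}
\item if $\lambda-\alpha/2-1/4\le 0$, the maximum occurs at $x=a$ and gives $\mathcal O(n^{\alpha/2-1/4})$;
\item if $\lambda-\alpha/2-1/4>0$, it occurs at $x\sim n$ and gives $\mathcal O(n^{\lambda-1/2})$.
\end{itemize}
Together these yield $\mathcal O(n^{\max\{\lambda-1/2,\alpha/2-1/4\}})$, which is the first line of the statement.

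For the second line I would first handle the transition zone $|x-\nu|\le \nu^{1/3}$ with the Airy-type Plancherel–Rotach formula:
\[
{\rm e}^{-x/2}L_n^{(\alpha)}(x)=(-1)^n\pi^{-1}2^{-\alpha-1/3}3^{1/3}n^{-1/3}\bigl(A(t)+\mathcal O(n^{-2/3})\bigr),\qquad x=\nu-2(2n/3)^{1/3}t,
\]
with $A$ bounded. Since $x^{\lambda}\sim n^{\lambda}$ there, this zone contributes $\mathcal O(n^{\lambda-1/3})$. On the intermediate range $(4-\eta)n\le x\le \nu-\nu^{1/3}$, the oscillatory Plancherel–Rotach form has an amplitude behaving like $n^{\lambda-1/2}\,(1-x/\nu)^{-1/4}$ up to constants. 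At the boundary of the Airy zone, $1-x/\nu\sim n^{-2/3}$, so this amplitude is at most $n^{\lambda-1/2+1/6}=n^{\lambda-1/3}$, and it matches the Airy bound. Beyond the turning point, $x\ge\nu+\nu^{1/3}$, the exponential-decay form of the asymptotics gives monotone decay. For $x\ge(1+\epsilon)\nu$, I would instead use the crude bound $|L_n^{(\alpha)}(x)|\le Cx^n/n!$, which makes ${\rm e}^{-x/2}x^{\lambda}$ times it exponentially small. Combining the ranges gives $\mathcal O(n^{\max\{\lambda-1/3,\alpha/2-1/4\}})$ on all of $x\ge a$.

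The main obstacle is the uniformity of the Plancherel–Rotach asymptotics as the oscillatory regime merges into the Airy regime. The $(1-x/\nu)^{-1/4}$ blow-up has to be matched exactly against the $n^{-1/3}$ Airy scale. I would handle this by using a single uniform Airy-function approximation valid on $[\epsilon n,\infty)$ (Olver's uniform expansion). This gives a bound of the form ${\rm e}^{-x/2}|L_n^{(\alpha)}(x)|\lesssim n^{-1/3}\bigl(n^{1/3}+|x-\nu|\bigr)^{-1/4}n^{1/12}$-type, from which both the $\lambda-1/2$ and $\lambda-1/3$ exponents follow directly.
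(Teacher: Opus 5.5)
The paper gives no proof of this lemma; it only quotes Szeg\H{o}. Your outline is sound. It departs from the classical argument mainly in how it closes the turning-point gap.

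\textbf{Where your route differs.} You correctly see that the Plancherel--Rotach formulas hold only for fixed $\phi\ge\epsilon$ and for bounded $t$. This leaves the ranges $n^{1/3}\ll|x-\nu|\ll n$ uncovered on \emph{both} sides of $\nu$. Your appeal to the exponential form on $x\ge\nu+\nu^{1/3}$ has the same defect as the oscillatory form below $\nu$, though your uniform expansion covers it too.

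You bridge the gap with Olver's (Frenzen--Wong's) uniform Airy expansion on $[\epsilon n,\infty)$. That is legitimate. It gives the two-sided envelope $n^{-1/3}(1+|x-\nu|n^{-1/3})^{-1/4}$, hence also matching lower bounds, which this $\mathcal{O}$-statement does not need. The price is importing a heavy external theorem.

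The elementary route available in Szeg\H{o}'s framework is a Sonin argument:
\begin{itemize}
\item $u={\rm e}^{-x/2}x^{(\alpha+1)/2}L_n^{(\alpha)}(x)$ satisfies $u''+Qu=0$ with $Q=\frac{\nu}{4x}-\frac14+\frac{1-\alpha^2}{4x^2}$.
\item $S=u^2+u'^2/Q$ has $S'=-u'^2Q'/Q^2\ge0$ wherever $Q>0$ and $x>(\alpha^2-1)/(2n+\alpha+1)$. So the relative maxima of $|u|$ increase up to the turning point $x_+\approx\nu$ (Szeg\H{o}, Theorem 7.6.4).
\item Beyond $x_+$ one has $Q<0$, so $|u|$ is convex there and hence decreasing.
\item The last maximum therefore lies between the largest zero (which is $\nu-\mathcal{O}(n^{1/3})$) and $x_+$, i.e.\ in the Airy window.
\end{itemize}
This gives $\sup_{x\ge\epsilon n}|u|=\mathcal{O}(n^{(\alpha+1)/2-1/3})$ from the fixed-range formulas alone. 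Multiplying by $x^{\lambda-(\alpha+1)/2}\asymp n^{\lambda-(\alpha+1)/2}$ on $[\epsilon n,6n]$ yields $\mathcal{O}(n^{\lambda-1/3})$.

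\textbf{One step needs correcting: the crude bound at large $x$.} The bound $|L_n^{(\alpha)}(x)|\le x^n/n!$ does hold beyond the largest zero; for $\alpha>-1$ it follows from $L_n^{(\alpha)}(x)=\frac{(-1)^n}{n!}\prod_k(x-x_k)$. But ${\rm e}^{-x/2}x^{n+\lambda}/n!$ is exponentially small only for $x\gtrsim 5.4\,n$. At $x=\nu$ it is roughly $(4/{\rm e})^{n}$ up to powers of $n$, so it cannot be used on $[(1+\epsilon)\nu,\infty)$ for small $\epsilon$.

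The fix: use the crude bound for $x\ge 6n$, say, and treat $[\nu+\nu^{1/3},6n]$ with the uniform expansion (or the monotonicity above), where $x^\lambda\asymp n^\lambda$. This split is needed anyway, because the envelope $(n^{1/3}+|x-\nu|)^{-1/4}$ alone does not control $x^\lambda$ as $x\to\infty$.
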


\begin{lemma}{\rm(\cite[Theorem 8.91.3]{Szego})}\label{lemma2.3}
Let $\lambda\in\mathbb{R}$, $a>0$ and $0<\eta<2$. As $n\to\infty$, the Hermite polynomial satisfies
\begin{align}\label{eq11}
  \max_{|x|\geq a} {\rm e}^{-x^2/2} x^{\lambda} \left|H_{n}(x)\right|
  =\sqrt{2^{n}n!}\left\{\begin{aligned}
  &\mathcal{O}\Big(n^{\max\{\lambda/2-1/4,-1/4\}}\Big),\quad &&\mathrm{if}\,\, a\leq |x|\leq \sqrt{(2-\eta)n},\\
  &\mathcal{O}\Big(n^{\max\{\lambda/2-1/12,-1/4\}}\Big),\quad &&\mathrm{if}\,\, |x|\geq a.
  \end{aligned}\right.
\end{align}
\end{lemma}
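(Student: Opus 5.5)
The plan is to reduce the Hermite estimate to the Laguerre estimate of Lemma \ref{lemma2.2} through the identities \eqref{eq8}, treating even and odd degrees separately and then tracking the normalizing constants with Stirling's formula. Since $H_m(-x)=(-1)^mH_m(x)$, it suffices to consider $x\geq a$, reading $x^{\lambda}$ as $|x|^{\lambda}$. Throughout, write $m$ for the degree of $H_m$, so the target bound is $\sqrt{2^m m!}$ times the stated power of $m$ (equivalently of $n$, since $m\asymp n$).

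\emph{Even degree} $m=2n$. Substitute $t=x^2$ in \eqref{eq8}. This gives
\begin{align*}
{\rm e}^{-x^2/2}|x|^{\lambda}|H_{2n}(x)|=2^{2n}n!\,{\rm e}^{-t/2}t^{\lambda/2}\big|L_n^{(-1/2)}(t)\big|,\qquad t\geq a^2 .
\end{align*}
Apply Lemma \ref{lemma2.2} with $\alpha=-1/2$, exponent $\lambda/2$, and lower limit $a^2$.

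On the restricted range $|x|\leq\sqrt{(2-\eta)2n}$, i.e. $t\leq(4-2\eta)n$, we use the lemma with $\eta'=2\eta\in(0,4)$. The maximum is then $\mathcal{O}(n^{\max\{\lambda/2-1/2,-1/2\}})$. On the full range $t\geq a^2$ the bound is $\mathcal{O}(n^{\max\{\lambda/2-1/3,-1/2\}})$.

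Next compare the prefactor with $\sqrt{2^{2n}(2n)!}$. Stirling gives $(2n)!\sim 4^n(n!)^2/\sqrt{\pi n}$, so
\begin{align*}
\frac{2^{2n}n!}{\sqrt{2^{2n}(2n)!}}\sim(\pi n)^{1/4}.
\end{align*}
Multiplying the Laguerre bounds by $n^{1/4}$ gives $n^{\max\{\lambda/2-1/4,-1/4\}}$ on the restricted range. On the full range it gives $n^{\max\{\lambda/2-1/12,-1/4\}}$, since $-1/3+1/4=-1/12$ and $-1/2+1/4=-1/4$.

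\emph{Odd degree} $m=2n+1$. Here the extra factor $x$ in \eqref{eq8} is absorbed into the weight:
\begin{align*}
{\rm e}^{-x^2/2}|x|^{\lambda}|H_{2n+1}(x)|=2^{2n+1}n!\,{\rm e}^{-t/2}t^{(\lambda+1)/2}\big|L_n^{(1/2)}(t)\big|.
\end{align*}
Apply Lemma \ref{lemma2.2} with $\alpha=1/2$ and exponent $(\lambda+1)/2$. This yields $\mathcal{O}(n^{\max\{\lambda/2,0\}})$ on the restricted range and $\mathcal{O}(n^{\max\{\lambda/2+1/6,0\}})$ on the full range.

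The restricted range $t\leq(2-\eta)(2n+1)$ is contained in $t\leq(4-\eta)n$ for $n$ large, so the lemma applies there. For the constant, Stirling again gives
\begin{align*}
\sqrt{2^{2n+1}(2n+1)!}\sim\sqrt{2}\,2^n n!\,(\pi n)^{-1/4}\sqrt{2n},
\end{align*}
hence $2^{2n+1}n!/\sqrt{2^{2n+1}(2n+1)!}\asymp n^{-1/4}$. Multiplying through recovers exactly the same exponents $\max\{\lambda/2-1/4,-1/4\}$ and $\max\{\lambda/2-1/12,-1/4\}$ as in the even case.

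The conceptual content is entirely in Lemma \ref{lemma2.2}. The main obstacle is only bookkeeping: the exponent shift $\lambda\mapsto\lambda/2$ versus $(\lambda+1)/2$, the Stirling asymptotics of the normalizing ratio, and checking that the oscillatory range $|x|\leq\sqrt{(2-\eta)m}$ maps into $t\leq(4-\eta')n$ with some admissible $\eta'\in(0,4)$ for both parities. I would verify the constant ratios first, since a mistake there shifts all exponents by $1/4$.
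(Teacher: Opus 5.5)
Your proof is correct. The paper gives no argument of its own; it only cites Szegő's Theorem 8.91.3, and your derivation is the standard one: you deduce the Hermite bound from the Laguerre maxima of Lemma \ref{lemma2.2} through \eqref{eq8}, with the exponent shifts $\lambda/2$ and $(\lambda+1)/2$, the ranges $t\le(4-2\eta)n$ and $t\le(4-\eta)n$, and the Stirling factors $n^{\pm 1/4}$ all handled correctly.

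One slip needs fixing. In the odd case, $\sqrt{2^{2n+1}(2n+1)!}\sim\sqrt{2}\,2^{2n}n!\,(\pi n)^{-1/4}\sqrt{2n}$, not $\sqrt{2}\,2^{n}n!\,(\pi n)^{-1/4}\sqrt{2n}$. With the correct power of $2$, the ratio is $\pi^{1/4}n^{-1/4}$, exactly as you conclude.
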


These results {play a fundamental role} in establishing precise convergence rates and asymptotic estimates {for} Laguerre and Hermite approximations, {particularly for functions exhibiting singularities}.

\section{Some useful {lemmas}}
In this section, we review several asymptotic estimates for integrals involving highly oscillatory Bessel functions, which arise through the Hilb-type formula \eqref{eq9} in connection with Laguerre polynomials. These results extend the classical van der Corput lemma for Fourier transforms to the following Bessel transforms.

\begin{lemma}[\cite{Xiang2007}]\label{lemma3.1}
Let $b>a>0$, $\psi(x)\in C[a,b]$ and $\psi^{\prime}(x)\in L^1[a,b]$. Then as $\omega\to\infty$, it holds that
\begin{align*}
\int_a^{b}  J_v(\omega x) \psi(x) \mathrm{d}x
=\mathcal{O} \Big(\omega^{-3/2}\Big).
\end{align*}
\end{lemma}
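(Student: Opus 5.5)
The plan is to write $J_v$ as a leading oscillatory term plus a remainder and handle each piece separately. Since $b>a>0$, the argument $\omega x\ge \omega a\to\infty$ uniformly on $[a,b]$. We can therefore use the standard large-argument expansion
\begin{align*}
J_v(z)=\sqrt{\frac{2}{\pi z}}\Big(\cos\big(z-\tfrac{v\pi}{2}-\tfrac{\pi}{4}\big)+R_v(z)\Big),\qquad R_v(z)=\mathcal{O}(z^{-1}),
\end{align*}
which holds uniformly for $z\ge \omega a$. Substituting it, the integral splits as
\begin{align*}
\sqrt{\frac{2}{\pi\omega}}\int_a^b \frac{\psi(x)}{\sqrt{x}}\cos\big(\omega x-\theta_v\big)\,\mathrm{d}x+\sqrt{\frac{2}{\pi\omega}}\int_a^b \frac{\psi(x)}{\sqrt{x}}R_v(\omega x)\,\mathrm{d}x,
\end{align*}
where $\theta_v=v\pi/2+\pi/4$.

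The remainder term is immediate. Since $\psi$ is continuous on the compact interval $[a,b]$, we have $|\psi(x)|/\sqrt{x}\le \|\psi\|_\infty a^{-1/2}$. Together with $|R_v(\omega x)|\le C(\omega a)^{-1}$, this bounds the second integral by a constant times $\omega^{-1/2}\cdot\omega^{-1}=\omega^{-3/2}$.

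For the main term, set $\phi(x)=\psi(x)x^{-1/2}$. Since $\psi\in C[a,b]$ and $\psi'\in L^1[a,b]$, the function $\psi$ is absolutely continuous. Because $x^{-1/2}$ is smooth on $[a,b]$, $\phi$ is also absolutely continuous, with $\phi'=\psi'x^{-1/2}-\tfrac12\psi x^{-3/2}\in L^1[a,b]$. Integrating by parts once, which is legitimate for absolutely continuous functions, gives
\begin{align*}
\int_a^b\phi(x)\cos(\omega x-\theta_v)\,\mathrm{d}x=\Big[\frac{\phi(x)\sin(\omega x-\theta_v)}{\omega}\Big]_a^b-\frac{1}{\omega}\int_a^b\phi'(x)\sin(\omega x-\theta_v)\,\mathrm{d}x.
\end{align*}
The right-hand side is bounded by $\omega^{-1}\big(|\phi(a)|+|\phi(b)|+\|\phi'\|_{L^1}\big)=\mathcal{O}(\omega^{-1})$. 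Multiplying by the prefactor $\omega^{-1/2}$ yields $\mathcal{O}(\omega^{-3/2})$, and combining this with the remainder estimate proves the lemma.

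The only delicate point is the uniformity of the Bessel remainder bound, that is, the existence of a constant $C$ with $|R_v(z)|\le C/z$ for all $z\ge z_0>0$. This follows from Hankel's asymptotic expansion with an explicit error bound, or equivalently from writing $J_v$ via the Hankel functions $H_v^{(1,2)}$ and their integral representations. Because $a>0$ keeps us away from the origin, no small-argument behaviour of $J_v$ ever enters the argument. If one prefers to avoid explicit remainder estimates, an alternative route is to use $\frac{\mathrm{d}}{\mathrm{d}x}\big[x^{-v}\ldots\big]$-type Bessel recurrences to integrate by parts directly. However, the asymptotic-expansion route above is the simplest, and I would follow it.
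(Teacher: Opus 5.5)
Your proof is correct. The paper does not prove Lemma~\ref{lemma3.1} itself; it quotes the result from \cite{Xiang2007}, so there is no in-paper argument to compare against step by step.

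Each of your steps checks out. For real $v$, $R_v$ is continuous on $(0,\infty)$ and $\mathcal{O}(z^{-1})$ at infinity. Hence $\sup_{z\ge z_0} z|R_v(z)|<\infty$ for any fixed $z_0>0$, which is all the uniformity you need once $\omega a\ge z_0$. The remainder piece is then $\mathcal{O}(\omega^{-1/2}\cdot\omega^{-1})$, and the main piece is the standard Fourier-type integration by parts. Reading $\psi'\in L^1[a,b]$ as absolute continuity of $\psi$ is the intended hypothesis. For a merely a.e.\ differentiable $\psi$, such as a Cantor-type function, that inference would fail.

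The route you mention at the end is shorter and uses less Bessel asymptotics. From $\frac{\mathrm{d}}{\mathrm{d}x}\big[x^{v+1}J_{v+1}(\omega x)\big]=\omega x^{v+1}J_v(\omega x)$, one integration by parts gives
\begin{align*}
\int_a^b J_v(\omega x)\psi(x)\,\mathrm{d}x=\frac{1}{\omega}\Big[\psi(x)J_{v+1}(\omega x)\Big]_a^b-\frac{1}{\omega}\int_a^b x^{v+1}J_{v+1}(\omega x)\big(x^{-v-1}\psi(x)\big)'\,\mathrm{d}x .
\end{align*}
After that, only the leading-order bound $|J_{v+1}(z)|\le Cz^{-1/2}$ for $z\ge\omega a$ is needed, not the $\mathcal{O}(z^{-1})$ correction inside the Hankel bracket. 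This version isolates the two sources of decay: a factor $\omega^{-1}$ from integrating by parts, and a factor $\omega^{-1/2}$ from the size of $J_{v+1}$ away from the origin. It also uses exactly the kind of bound the paper itself invokes later, namely $J_v(\omega z)=\mathcal{O}(\omega^{-1/2})$ in the proof of Lemma~\ref{lemma3.3}.

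Your version instead reduces the problem to the classical Fourier case, with the phase $\omega x-\theta_v$ explicit. That is the more natural starting point if you also want the leading boundary contribution rather than just an upper bound.
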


\begin{lemma}[\cite{Xiang}]\label{lemma3.2}
Let $\alpha+v> -1$, $\beta>-1$ and $\mu\in\mathbb{N}$. Suppose that $\psi(x)\in C[0,b]$ and $\psi'(x)\in L^1[0,b]$ for a constant $b>0$. Then, as $\omega\to\infty$, the following asymptotic estimates hold
\begin{align}\label{eq12}
\int_0^{b} \ln^{\mu}(x) x^{\alpha} (b-x)^{\beta}J_v(\omega x)  \psi(x)\mathrm{d}x=
\mathcal{O}\left(\max \left\{ \frac{\ln^{\mu}\omega}{\omega^{\alpha+1}},\frac{1}{\omega^{\min \left\{\beta+3/2,3/2 \right\}}} \right\}\right),
\end{align}
and
\begin{align}\label{eq13}
\int_0^{b} \ln^{\mu}(b-x) x^{\alpha}(b-x)^{\beta}J_v(\omega x)  \psi(x)\mathrm{d}x=
\mathcal{O}\left(\max \left\{ \frac{1}{\omega^{\alpha+1}},\frac{\ln^{\mu}\omega}{\omega^{\min \left\{\beta+3/2,3/2 \right\}}} \right\}\right).
\end{align}
\end{lemma}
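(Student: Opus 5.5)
The plan is to split $[0,b]$ into a neighbourhood of the left endpoint, a neighbourhood of the right endpoint, and a middle piece. Each piece is handled by a different mechanism: near $x=0$ the Bessel argument $\omega x$ is small or moderate and the algebraic--logarithmic singularity is explicit; near $x=b$ we use the oscillatory large-argument behaviour $J_v(z)=\sqrt{2/(\pi z)}\,\cos(z-v\pi/2-\pi/4)+\mathcal{O}(z^{-3/2})$ together with a van der Corput/Fourier-type estimate for the endpoint singularity $(b-x)^\beta$. The middle piece is covered directly by Lemma~\ref{lemma3.1}.

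\textbf{Splitting.} Fix a smooth partition of unity $1=\chi_0+\chi_1$ on $[0,b]$, with $\chi_0\equiv 1$ on $[0,b/3]$ and $\chi_0\equiv0$ on $[2b/3,b]$. Write the integral as $I_0+I_1$. In $I_0$ the factor $(b-x)^\beta\chi_0$ is smooth, so it can be absorbed into $\psi$. In $I_1$ the factor $\ln^\mu(x)\,x^\alpha\chi_1$ is smooth.

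\textbf{Left piece $I_0$.} Substitute $t=\omega x$, so that
\[
I_0=\omega^{-\alpha-1}\int_0^{b\omega}\ln^\mu(t/\omega)\,t^\alpha J_v(t)\,\tilde\psi(t/\omega)\,\mathrm{d}t .
\]
Expanding $\ln^\mu(t/\omega)=\sum_k\binom{\mu}{k}\ln^k t\,(-\ln\omega)^{\mu-k}$ isolates the leading factor $\ln^\mu\omega/\omega^{\alpha+1}$. The remaining work is to show that each integral $\int_0^{b\omega}\ln^k t\,t^\alpha J_v(t)\,\tilde\psi(t/\omega)\,\mathrm{d}t$ is $\mathcal{O}(1)$, or at worst grows slowly enough to be absorbed into the maximum.
\begin{itemize}
\item On $[0,1]$ we use $J_v(t)=\mathcal{O}(t^v)$. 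Together with $\alpha+v>-1$, this makes the integral converge.
\item On $[1,b\omega]$ we use the asymptotic form of $J_v$ and integrate the oscillatory term $t^{\alpha-1/2}\ln^k t\cos(t-\cdot)\tilde\psi(t/\omega)$ by parts, using $\psi'\in L^1$.
\end{itemize}
When $\alpha$ is large this growth is harmless only after rescaling back. A cleaner route is to split $I_0$ at $x=1/\omega$: the part on $[0,1/\omega]$ gives exactly the $\ln^\mu\omega/\omega^{\alpha+1}$ term. On $[1/\omega,b]$ we apply the second mean value theorem to the monotone non-oscillatory factor, bounding $\int J_v(\omega x)\cos$-type kernels by $\mathcal{O}(\omega^{-3/2}x^{-1/2})$ as in Lemma~\ref{lemma3.1}. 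This yields $\mathcal{O}(\max\{\ln^\mu\omega\,\omega^{-\alpha-1},\omega^{-3/2}\})$.

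\textbf{Right piece $I_1$.} For $x\ge b/3$ we insert the asymptotic expansion of $J_v(\omega x)$. The remainder contributes $\mathcal{O}(\omega^{-3/2})$, since $(b-x)^\beta$ is integrable. The main term is $\omega^{-1/2}\int(b-x)^\beta\phi(x)\cos(\omega x-c)\,\mathrm{d}x$ with $\phi$ of bounded variation. The classical Fourier endpoint estimate (Erdélyi / van der Corput) gives $\mathcal{O}(\omega^{-\beta-1})$ when $-1<\beta<0$ and $\mathcal{O}(\omega^{-1})$ when $\beta\ge0$, via integration by parts with $\phi'\in L^1$. Multiplying by $\omega^{-1/2}$ gives $\mathcal{O}(\omega^{-\min\{\beta+3/2,3/2\}})$.

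For \eqref{eq13} we swap the roles: the logarithm now sits at $x=b$. The Fourier endpoint lemma with $\ln^\mu(b-x)(b-x)^\beta$ yields an extra factor $\ln^\mu\omega$, obtained by the same substitution $s=\omega(b-x)$ and binomial expansion.

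\textbf{Main obstacle.} The delicate step is the uniform treatment near $x=0$: the transition region $x\sim1/\omega$, where neither the small-argument nor the large-argument form of $J_v$ applies cleanly, together with $\psi$ being only $C$ with $L^1$ derivative. I would handle it with the second mean value theorem for integrals, combined with the uniform bound $|\int_{c}^{d}J_v(t)\,t^{\alpha}\ln^k t\,\mathrm{d}t|$ obtained from the known antiderivative behaviour of $t^{\alpha}J_v(t)$. The boundary case $\beta\ge0$, where $(b-x)^\beta$ is not singular and the bound saturates at $\omega^{-3/2}$, also needs a separate check.
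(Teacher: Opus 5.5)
Your overall architecture is sound, and only the left piece $I_0$ on $[1/\omega,2b/3]$ has a real gap. (The paper gives no proof of this lemma; it imports it from the cited reference.) What works:
\begin{itemize}
\item the smooth partition of unity cleanly separates the two endpoint singularities;
\item the bound on $[0,1/\omega]$ via $|J_v(z)|\le Cz^v$ and $\alpha+v>-1$ is correct;
\item the right piece $I_1$ is fine: Hankel's expansion, then the Fourier endpoint bound for $(b-x)^\beta$ against an amplitude in $C$ with $L^1$ derivative, with the extra $\ln^\mu\omega$ for \eqref{eq13}.
\end{itemize}

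\textbf{The gap.} You assert the estimate for $I_0$ on $[1/\omega,2b/3]$ rather than derive it, and you flag it yourself as the main obstacle. Write $\tilde\psi=(b-x)^\beta\chi_0\psi$ and take the amplitude $A(x)=x^{\alpha}\ln^{\mu}(x)\tilde\psi(x)$ against the kernel $J_v(\omega x)$, with primitive bound $|\int_\xi^\eta J_v(\omega x)\,\mathrm{d}x|\le C\omega^{-3/2}\xi^{-1/2}$. With this normalization the second mean value theorem does not give the claimed rate.
\begin{itemize}
\item $A$ is not monotone, since $\tilde\psi$ is only of bounded variation.
\item Even on monotone pieces, where $|x^\alpha\ln^\mu x|$ increases away from $0$ (any $\alpha>0$), a Bonnet step gives $|A(\eta)|\,\omega^{-3/2}\xi^{-1/2}$ with $\xi$ possibly of size $1/\omega$. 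That is only $\mathcal{O}(\omega^{-1})$.
\item Repairing this by Abel summation against $G(x)=\int_x^{2b/3}J_v(\omega t)\,\mathrm{d}t$ (a dyadic decomposition behaves the same way) forces you to bound $\omega^{-3/2}\int_{1/\omega}^{2b/3}|A'(x)|\,x^{-1/2}\,\mathrm{d}x$. This contains $\omega^{-3/2}\int_{1/\omega}^{2b/3}x^{\alpha-3/2}|\ln x|^{\mu}\,\mathrm{d}x$.
\end{itemize}
For $\alpha\neq 1/2$ that last term is harmless. At $\alpha=1/2$ it is of order $\omega^{-3/2}\ln^{\mu+1}\omega$, one logarithm more than \eqref{eq12} allows when $\beta\ge 0$. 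For example, with $\mu=0$, $\alpha=1/2$, $\beta\ge0$ the lemma asserts $\mathcal{O}(\omega^{-3/2})$, but you would only get $\mathcal{O}(\omega^{-3/2}\ln\omega)$. The same loss occurs in the left piece of \eqref{eq13} when $\mu=0$. Since $\omega^{-3/2}\xi^{-1/2}$ is the true size of the primitive $G$, the loss comes from discarding its oscillation, not from a poor constant.

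\textbf{The repair: put $x^{1/2}$ into the kernel.} Let $\theta=s-\tfrac{v\pi}{2}-\tfrac{\pi}{4}$. The two-term Hankel expansion $s^{1/2}J_v(s)=\sqrt{2/\pi}\,\bigl[\cos\theta-\tfrac{4v^2-1}{8s}\sin\theta\bigr]+\mathcal{O}(s^{-2})$, valid for $s\ge1$, shows that $K(x)=\int_{1/\omega}^{x}t^{1/2}J_v(\omega t)\,\mathrm{d}t=\omega^{-3/2}\int_{1}^{\omega x}s^{1/2}J_v(s)\,\mathrm{d}s=\mathcal{O}(\omega^{-3/2})$ uniformly in $x$. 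The middle term is only conditionally integrable, which is exactly where absolute-value bounds lose the logarithm.

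Now set $B(x)=x^{\alpha-1/2}\ln^{\mu}(x)\tilde\psi(x)$, which vanishes at $2b/3$. Since also $K(1/\omega)=0$, one integration by parts gives $|\int_{1/\omega}^{2b/3}B(x)\,x^{1/2}J_v(\omega x)\,\mathrm{d}x|\le\sup|K|\cdot\mathrm{Var}_{[1/\omega,2b/3]}(B)$.
\begin{itemize}
\item The factor $x^{\alpha-1/2}\ln^{\mu}x$ is piecewise monotone with at most three pieces. So its variation is bounded by a multiple of its supremum.
\item That supremum on $[1/\omega,2b/3]$ is $\mathcal{O}(\omega^{1/2-\alpha}\ln^{\mu}\omega)$, $\mathcal{O}(\ln^{\mu}\omega)$, or $\mathcal{O}(1)$ for $\alpha<1/2$, $\alpha=1/2$, $\alpha>1/2$ respectively.
\item The $\tilde\psi'$ contribution is that same supremum times $\|\tilde\psi'\|_{L^1}$.
\end{itemize}
Multiplying by $\sup|K|=\mathcal{O}(\omega^{-3/2})$ yields $\mathcal{O}(\max\{\omega^{-\alpha-1}\ln^{\mu}\omega,\omega^{-3/2}\})$ for every admissible $\alpha$, including $\alpha=1/2$, and completes $I_0$. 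This is the same van der Corput mechanism the paper uses in Lemmas~\ref{lemma3.3}--\ref{lemma3.4}: bound a primitive uniformly in its upper limit, then integrate by parts once against a factor in $C$ with $L^1$ derivative.
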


{
\begin{lemma}\label{lemma3.3}
Let $\beta>-1$ and $\mu\in { \mathbb{N}}$. For any fixed constants $a$ and $b$ satisfying $0<a<b<\infty$, the following estimates hold uniformly for $t\in [a,b]$
\begin{align}\label{eq14}
\int_a^{t} \ln^{\mu}(x-a) (x-a)^{\beta} J_v(\omega x) \mathrm{d}x=
\mathcal{O}\left(\max \left\{\omega^{-\beta-3/2}\ln^{\mu}\omega, \omega^{-3/2} \right\}\right),
\end{align}
and
\begin{align}\label{eq15}
\int_t^{b} \ln^{\mu}(b-x) (b-x)^{\beta}J_v(\omega x) \mathrm{d}x=
\mathcal{O}\left(\max \left\{\omega^{-\beta-3/2}\ln^{\mu}\omega, \omega^{-3/2} \right\}\right).
\end{align}
as $\omega\to\infty$.
\end{lemma}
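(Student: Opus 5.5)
The plan is to prove \eqref{eq14} and obtain \eqref{eq15} by the symmetric argument. Substituting $x=a+s$ turns the integral into $\int_0^{t-a}\ln^{\mu}(s)\,s^{\beta}J_v(\omega(a+s))\,\mathrm{d}s$. Since $x\ge a>0$, the Bessel function is evaluated only at large arguments. So I would insert the standard large-argument expansion
\begin{align*}
J_v(z)=\sqrt{\tfrac{2}{\pi z}}\cos\Big(z-\tfrac{v\pi}{2}-\tfrac{\pi}{4}\Big)+R_v(z),\qquad R_v(z)=\mathcal{O}\big(z^{-3/2}\big),
\end{align*}
which holds uniformly for $z\ge \omega a$. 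The remainder part is bounded at once by $\omega^{-3/2}\int_0^{b-a}|\ln s|^{\mu}s^{\beta}\,\mathrm{d}s=\mathcal{O}(\omega^{-3/2})$, uniformly in $t$, because $\beta>-1$.

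It then remains to treat
\begin{align*}
\omega^{-1/2}\int_0^{t-a}\ln^{\mu}(s)\,s^{\beta}(a+s)^{-1/2}\,\mathrm{e}^{\pm\mathrm{i}\omega s}\,\mathrm{d}s,
\end{align*}
where the constant phase $\mathrm{e}^{\pm\mathrm{i}(\omega a-v\pi/2-\pi/4)}$ has been factored out. The target is to show that this Fourier-type integral is $\mathcal{O}(\max\{\omega^{-\beta-1}\ln^{\mu}\omega,\omega^{-1}\})$ uniformly in $t\in[a,b]$. I would split it at $s=1/\omega$; if $t-a<1/\omega$, only the first piece is present.

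On $[0,1/\omega]$ I would bound the integrand in absolute value. The integral is then at most $\int_0^{1/\omega}|\ln s|^{\mu}s^{\beta}\,\mathrm{d}s=\mathcal{O}(\omega^{-\beta-1}\ln^{\mu}\omega)$.

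On $[1/\omega,t-a]$ I would integrate by parts once, writing $\mathrm{e}^{\pm\mathrm{i}\omega s}=(\pm\mathrm{i}\omega)^{-1}\mathrm{d}\mathrm{e}^{\pm\mathrm{i}\omega s}$. The boundary terms are $\omega^{-1}$ times the amplitude $\phi(s)=\ln^{\mu}(s)s^{\beta}(a+s)^{-1/2}$ evaluated at $s=1/\omega$ and at $s=t-a$. The first gives $\mathcal{O}(\omega^{-\beta-1}\ln^{\mu}\omega)$. The second is the delicate one, since $t-a$ can itself be small. Here I would use that $|\phi(s)|\lesssim \max\{s^{\beta}|\ln s|^{\mu},1\}$, and that on $s\ge1/\omega$ this is at most $\max\{\omega^{-\beta}\ln^{\mu}\omega,1\}$ up to a constant. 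For $\beta<0$ this follows from monotonicity of $s^{\beta}|\ln s|^{\mu}$ near $0$. For $\beta\ge0$ the function is bounded on $[0,b-a]$.

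The remaining integral $\omega^{-1}\int_{1/\omega}^{t-a}|\phi'(s)|\,\mathrm{d}s$ is controlled by $|\phi'(s)|\lesssim s^{\beta-1}(1+|\ln s|)^{\mu}$ on $(0,b-a]$. Integrating from $1/\omega$ gives $\mathcal{O}(\omega^{-\beta}\ln^{\mu}\omega)$ when $\beta<0$. When $\beta=0$ it gives $\mathcal{O}(\ln^{\mu+1}\omega)$. When $\beta>0$ it gives $\mathcal{O}(1)$.

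The main obstacle is the borderline case $\beta=0$, and more generally the extra logarithm produced by this crude $L^1$ estimate of $\phi'$. For $\beta=0,\mu\ge1$, the crude bound yields $\omega^{-3/2}\ln^{\mu+1}\omega$, which is one logarithm too many. To avoid this I would not integrate $|\phi'|$ directly. Instead I would use that $\phi$ is piecewise monotone on $[1/\omega,b-a]$, with finitely many monotonicity intervals independent of $\omega$, since $\ln^{\mu}(s)s^{\beta}(a+s)^{-1/2}$ has finitely many critical points there. On each interval I would apply the second mean value theorem, or the van der Corput lemma with a monotone amplitude. This bounds each piece by $\omega^{-1}\sup|\phi|$, which is $\lesssim\omega^{-1}\max\{\omega^{-\beta}\ln^{\mu}\omega,1\}$ by the bound above.

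Multiplying by the prefactor $\omega^{-1/2}$ then gives $\mathcal{O}(\max\{\omega^{-\beta-3/2}\ln^{\mu}\omega,\omega^{-3/2}\})$, uniformly in $t$. This is \eqref{eq14}. For \eqref{eq15} I would substitute $x=b-s$ and run the same argument with the phase $\mathrm{e}^{\mp\mathrm{i}\omega s}$.
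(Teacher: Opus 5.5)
Your argument is correct and has the same skeleton as the paper's. Both split at distance $1/\omega$ from the singular endpoint and bound the short piece crudely using $|J_v(\omega x)|=\mathcal{O}(\omega^{-1/2})$ for $x\ge a$.

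The two proofs differ on the long piece $[a+1/\omega,t]$. The paper simply cites Lemma~\ref{lemma3.1} and records $\mathcal{O}(\omega^{-3/2})$. You instead replace $J_v$ by its large-argument expansion, absorb the $\mathcal{O}(z^{-3/2})$ remainder globally, and treat the resulting Fourier integral by hand, tracking how $\sup|\phi|$ and $\int|\phi'|$ over $[1/\omega,t-a]$ depend on $\omega$.

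That extra work is not just cosmetic. Lemma~\ref{lemma3.1} concerns a fixed interval and a fixed $\psi$, with an implicit constant governed by $\|\psi\|_\infty+\|\psi'\|_{L^1}$. For $\psi(x)=\ln^{\mu}(x-a)(x-a)^{\beta}$ on $[a+1/\omega,t]$, these quantities are of size $\max\{\omega^{-\beta}\ln^{\mu}\omega,1\}$ when $\beta\le 0$. In fact, for $-1<\beta<0$ the long piece is in general of exact order $\omega^{-\beta-3/2}$ (rescale $x=a+u/\omega$). So the paper's intermediate $\mathcal{O}(\omega^{-3/2})$ fails there. The lemma survives only because your corrected bound $\omega^{-3/2}\max\{\omega^{-\beta}\ln^{\mu}\omega,1\}$ still fits inside the claimed maximum. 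Your route also makes the uniformity in $t$ explicit, which the paper leaves implicit.

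One small correction: the $\beta=0$ difficulty you flag is an artifact of your crude bound $|\phi'(s)|\lesssim s^{\beta-1}(1+|\ln s|)^{\mu}$. Differentiating exactly, the term $\beta s^{\beta-1}\ln^{\mu}(s)(a+s)^{-1/2}$ vanishes at $\beta=0$. What remains is $\mu s^{-1}\ln^{\mu-1}(s)(a+s)^{-1/2}-\tfrac12\ln^{\mu}(s)(a+s)^{-3/2}$, whose integral over $[1/\omega,b-a]$ is $\mathcal{O}(\ln^{\mu}\omega)$. So a single integration by parts already suffices in every case. The second-mean-value detour is valid but not needed. If you keep it, justify that there are finitely many monotonicity intervals: $\phi'$ is real-analytic on $(0,b-a]$ and does not vanish near $s=0$.
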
}
\begin{proof}
Recall that for $z\in[a,a+1/\omega]$, the Bessel function satisfies $J_v(\omega z)=\mathcal{O}\big(\omega^{-1/2}\big)$ (see, e.g., \cite[(1.71.7)]{Szego}). Thus, for $t\in [a,a+1/\omega]$, it follows that
\begin{align*}
\aligned
&\int_a^t \ln^{\mu}(x-a)(x-a)^{\beta} J_v(\omega x) \mathrm{d}x
\\&=\mathcal{O}\big(\omega^{-1/2}\big) \int_a^{t} \ln^{\mu}(x-a)(x-a)^{\beta}  \mathrm{d}x
\\&=\mathcal{O}\big(\omega^{-1/2}\big)\left(\frac{\ln^{\mu}(x-a)  (x-a)^{\beta+1}}{\beta+1}\Big|_{a}^{t}
-\frac{\mu}{\beta+1} \int_a^{t}  \ln^{\mu-1}(x-a)  (x-a)^{\beta} \mathrm{d}x\right)
\\&=\mathcal{O}\big(\omega^{-\beta-3/2}\ln^{\mu}\omega\big)
+\mathcal{O}\big(\omega^{-\beta-3/2}\ln^{\mu-1}\omega\big)+\cdots+\mathcal{O}\big( \omega^{-\beta-3/2}\big)
\\&=\mathcal{O}\big(\omega^{-\beta-3/2}\ln^{\mu}\omega\big).
\endaligned
\end{align*}
For $t\in (a+1/\omega,b]$, we split the integral and derive that
\begin{align*}
\aligned
&\int_a^t \ln^{\mu}(x-a)(x-a)^{\beta} J_v(\omega x) \mathrm{d}x
\\&=\int_a^{a+1/\omega} \ln^{\mu}(x-a)(x-a)^{\beta} J_v(\omega x) \mathrm{d}
+\int_{a+1/\omega}^t \ln^{\mu}(x-a)(x-a)^{\beta} J_v(\omega x) \mathrm{d}x
\\&=\mathcal{O}\big(\omega^{-\beta-3/2}\ln^{\mu}\omega \big)+O\big(\omega^{-3/2}\big)
\\&=\mathcal{O}\left(\max \left\{\omega^{-\beta-3/2}\ln^{\mu}\omega , \omega^{-3/2} \right\}\right),
\endaligned
\end{align*}
where the estimate of the second integral follows from Lemma \ref{lemma3.1}. Then we get Equation \eqref{eq14}. A similar argument applies to the second integral leads to the estimate \eqref{eq15}, by the symmetry of the analysis.
\end{proof}

\begin{lemma}\label{lemma3.4}
Suppose $\beta>-1$, $0<a<b<\infty$, $\mu\in\mathbb{N}$. Suppose that $\psi(x)\in C[a,b]$ and $\psi'(x)\in L^1[a,b]$. Then the following asymptotic results hold as $\omega\to\infty$
\begin{align}\label{eq16}
\quad\int_a^{b} \ln^{\mu}(x-a) (x-a)^{\beta}\psi(x) J_v(\omega x) \mathrm{d}x
=\mathcal{O}\left(\max \left\{\omega^{-\beta-3/2}\ln^{\mu}\omega , \omega^{-3/2} \right\}\right)
\end{align}
and
\begin{align}\label{eq17}
\quad\int_a^{b} \ln^{\mu}(b-x) (b-x)^{\beta}\psi(x) J_v(\omega x) \mathrm{d}x=
\mathcal{O}\left(\max \left\{\omega^{-\beta-3/2}\ln^{\mu}\omega , \omega^{-3/2} \right\}\right).
\end{align}
\end{lemma}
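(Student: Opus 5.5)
We reduce Lemma \ref{lemma3.4} to the uniform estimates of Lemma \ref{lemma3.3} by integrating by parts, moving the derivative onto $\psi$. We treat \eqref{eq16}; \eqref{eq17} is handled the same way by symmetry.

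Define the auxiliary function
\begin{align*}
F(t)=\int_a^{t} \ln^{\mu}(x-a)(x-a)^{\beta} J_v(\omega x)\,\mathrm{d}x,\qquad t\in[a,b].
\end{align*}
Since $\beta>-1$, the integrand is absolutely integrable near $x=a$. Hence $F$ is absolutely continuous on $[a,b]$, with $F(a)=0$ and $F'(t)=\ln^{\mu}(t-a)(t-a)^{\beta}J_v(\omega t)$ almost everywhere. By \eqref{eq14}, uniformly in $t\in[a,b]$,
\begin{align*}
F(t)=\mathcal{O}\left(\max\left\{\omega^{-\beta-3/2}\ln^{\mu}\omega,\ \omega^{-3/2}\right\}\right).
\end{align*}

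Now integrate by parts. Since $\psi\in C[a,b]$ with $\psi'\in L^1[a,b]$, $\psi$ is absolutely continuous, so integration by parts for absolutely continuous functions applies and gives
\begin{align*}
\int_a^{b} \ln^{\mu}(x-a)(x-a)^{\beta}\psi(x)J_v(\omega x)\,\mathrm{d}x
= F(b)\psi(b)-\int_a^{b} F(x)\psi'(x)\,\mathrm{d}x.
\end{align*}
The boundary term is bounded by $|\psi(b)|\sup|F|$. The remaining integral is bounded by $\|\psi'\|_{L^1[a,b]}\sup_{t\in[a,b]}|F(t)|$. Both quantities are of the order stated in \eqref{eq16}, which proves \eqref{eq16}.

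For \eqref{eq17}, set instead
\begin{align*}
G(t)=\int_t^{b} \ln^{\mu}(b-x)(b-x)^{\beta}J_v(\omega x)\,\mathrm{d}x.
\end{align*}
Then $G(b)=0$ and $G'(t)=-\ln^{\mu}(b-t)(b-t)^{\beta}J_v(\omega t)$ almost everywhere. Integrating by parts gives
\begin{align*}
\int_a^{b} \ln^{\mu}(b-x)(b-x)^{\beta}\psi(x)J_v(\omega x)\,\mathrm{d}x=G(a)\psi(a)+\int_a^b G(x)\psi'(x)\,\mathrm{d}x.
\end{align*}
The uniform bound \eqref{eq15} on $G$ then finishes the proof in the same way.

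The essential ingredient is the uniformity in $t$ in Lemma \ref{lemma3.3}; once that is available, the argument above is routine. The only technical point is justifying integration by parts with an endpoint singularity in the weight. This is handled by the absolute continuity of $F$ and $G$, which follows from $\beta>-1$ and the boundedness of $J_v$.
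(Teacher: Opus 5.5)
Your proposal is correct and is essentially the paper's own proof. The paper writes the integrand as $\psi(x)F'(x)$ with $F(t)=\int_a^t \ln^{\mu}(u-a)(u-a)^{\beta}J_v(\omega u)\,\mathrm{d}u$ and integrates by parts. It then bounds both $\psi(b)F(b)$ and $\int_a^b \psi'(x)F(x)\,\mathrm{d}x$ by $\bigl(|\psi(b)|+\|\psi'\|_{L^1[a,b]}\bigr)\sup_{t\in[a,b]}|F(t)|$ via the uniform estimate \eqref{eq14}, and handles \eqref{eq17} the same way via \eqref{eq15}.

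Your explicit absolute-continuity justification is a small but welcome addition of rigor. It reads $\psi'\in L^1[a,b]$ as meaning $\psi$ is absolutely continuous, which the paper tacitly assumes as well.
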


\begin{proof}
According to Equation \eqref{eq14}, we have
\begin{align*}
\aligned
&\int_a^{b} \ln^{\mu}(x-a)  (x-a)^{\beta} \psi(x)J_v(\omega x) \mathrm{d}x
\\&=\int_a^{b} \psi(x)\left[\int_a^x \ln^{\mu}(u-a) (u-a)^{\beta} J_v(\omega u) \mathrm{d}u\right]' \mathrm{d}x
\\&=\psi(b) \int_a^{b} \ln^{\mu}(x-a) (x-a)^{\beta} J_v(\omega x) \mathrm{d}x
-\int_a^{b} \psi'(x)\left[\int_a^x \ln^{\mu}(u-a)(u-a)^{\beta} J_v(\omega u) \mathrm{d}u\right] \mathrm{d}x
\\&\leq C_0 \left( |\psi(b)| +\int_a^{b} |\psi'(x)| \mathrm{d}x \right)
\cdot  \max \left\{\omega^{-\beta-3/2}\ln^{\mu}\omega , {\omega^{-3/2}}\right\}
\\&\leq C  \cdot \max \left\{\omega^{-\beta-3/2}\ln^{\mu}\omega , {\omega^{-3/2}}\right\},
\endaligned
\end{align*}
where $C_0, C$ are some constants independent of $x$.

The proof of \eqref{eq17} follows analogously by applying the same argument to Equation \eqref{eq15}.
\end{proof}

\begin{lemma}\label{lemma3.5}
{Let $a$ and $b$ be fixed finite constants, $\mu\in\mathbb{N}$, $\psi(x)\in C[a,b]$ and $\psi'(x)\in L^1[a,b]$. Then the following asymptotic results hold as $\omega\to\infty$:}

\begin{itemize}
\item Suppose the following conditions are satisfied to ensure the integrability of the integral: (i) If $0 < a < b$, assume $\beta > -1$. (ii) If $0 = a < b$, assume $\alpha + \delta + \beta + v > -1$. (iii) If $a < 0 \leq b$, assume $\alpha + \delta + v > -1$ and $\beta > -1$. (iv) If $a < b < 0$, assume $\beta > -1$. Then we have
      \begin{align}\label{eq18}
      \aligned
      & \int_{a}^{b} x^{\alpha}|x|^{\delta} \ln^{\mu}(x-a)  (x-a)^{\beta} \psi(x) J_v(\omega |x|) \mathrm{d}x
      \\&=\left\{
      \aligned
      &\mathcal{O}\left(\max \left\{\omega^{-\beta-3/2}\ln^{\mu}\omega , \omega^{-3/2} \right\}\right), &0<a<b,
      \\&\mathcal{O}\left(\max \left\{\omega^{-\alpha-\delta-\beta-1}\ln^{\mu}\omega ,\omega^{-3/2} \right\}\right), &0=a<b,
      \\&\mathcal{O}\left(\max \left\{ \frac{1}{\omega^{\alpha+\delta+1}},\frac{\ln^{\mu}\omega}{\omega^{\min \left\{\beta+3/2,3/2\right\}}} \right\}\right), &a<0\leq b,
      \\&\mathcal{O}\left(\max \left\{\omega^{-\beta-3/2}\ln^{\mu}\omega , \omega^{-3/2} \right\}\right), &a<b<0.
      \endaligned\right.
      \endaligned
      \end{align}

\item Suppose the following conditions hold: (i) {If $0<a<b$, assume $\beta>-1$}. (ii) {If $a\leq0<b$, assume $\alpha+\delta+v>-1$ and $\beta>-1$}. (iii) If $a<b=0$, assume $\alpha+\delta+\beta+v>-1$. (iv) If $a<b<0$, assume $\beta>-1$. Then we have
    \begin{align}\label{eq19}
    \aligned
    & \int_{a}^{b} x^{\alpha}|x|^{\delta} \ln^{\mu}(b-x)  (b-x)^{\beta} \psi(x) J_v(\omega |x|) \mathrm{d}x
    \\&=\left\{
    \aligned
    & \mathcal{O}\left(\max \left\{\omega^{-\beta-3/2}\ln^{\mu}\omega, \omega^{-3/2} \right\}\right), &{0<a<b},
    \\&\mathcal{O}\left(\max \left\{ \frac{1}{\omega^{\alpha+\delta+1}},\frac{\ln^{\mu}\omega}{\omega^{\min \left\{\beta+3/2,3/2 \right\}}} \right\}\right), &{a\leq0<b},
    \\&\mathcal{O}\left(\max \left\{\omega^{-\alpha-\delta-\beta-1}\ln^{\mu}\omega ,\omega^{-3/2} \right\}\right), &a<b=0,
    \\& \mathcal{O}\left(\max \left\{\omega^{-\beta-3/2}\ln^{\mu}\omega, \omega^{-3/2} \right\}\right), &a<b<0.
    \endaligned
    \right.
    \endaligned
    \end{align}
\end{itemize}
\end{lemma}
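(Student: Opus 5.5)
The plan is a case analysis on where the interval $[a,b]$ sits relative to the origin. In each case we absorb the smooth factors into a new amplitude and then apply Lemma \ref{lemma3.4} or Lemma \ref{lemma3.2}. I treat \eqref{eq18}; \eqref{eq19} is symmetric. Its proof uses \eqref{eq17} and \eqref{eq13} in place of \eqref{eq16} and \eqref{eq12}, together with the reflection $x\mapsto -x$ when $b\le 0$.

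\textbf{Case $0<a<b$.} Here $x^{\alpha}|x|^{\delta}$ is smooth and bounded away from zero on $[a,b]$. We therefore set $\tilde\psi(x)=x^{\alpha}|x|^{\delta}\psi(x)$. Then $\tilde\psi\in C[a,b]$ and $\tilde\psi'\in L^1[a,b]$, and \eqref{eq16} gives the result directly.

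\textbf{Case $a<b<0$.} We substitute $x=-y$, so that $J_v(\omega|x|)=J_v(\omega y)$ with $y\in[-b,-a]\subset(0,\infty)$. The factor $x^{\alpha}$ becomes a constant times $y^{\alpha}$, interpreted as $(-1)^{\alpha}$ or via the paper's convention for real powers of negative numbers. The weight $(x-a)^{\beta}\ln^{\mu}(x-a)$ becomes $(-a-y)^{\beta}\ln^{\mu}(-a-y)$, which is singular at the right endpoint $-a$. So \eqref{eq17} applies on $[-b,-a]$ with amplitude $y^{\alpha+\delta}\psi(-y)$.

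\textbf{Case $0=a<b$.} The integrand is $x^{\alpha+\delta+\beta}\ln^{\mu}x\,\psi(x)J_v(\omega x)$ on $[0,b]$. Applying \eqref{eq12} with exponent $\alpha+\delta+\beta$ and no $(b-x)$ factor (that is, $\beta=0$ in that lemma) gives $\max\{\omega^{-\alpha-\delta-\beta-1}\ln^{\mu}\omega,\omega^{-3/2}\}$. The hypothesis $\alpha+\delta+\beta+v>-1$ is exactly the condition needed there.

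\textbf{Case $a<0\le b$.} I split the interval at $0$. On $[0,b]$, the factor $(x-a)^{\beta}\ln^{\mu}(x-a)$ is smooth because $x-a\ge -a>0$. It can be absorbed into $\psi$, and \eqref{eq12} with exponent $\alpha+\delta$ yields $\mathcal{O}(\max\{\omega^{-\alpha-\delta-1},\omega^{-3/2}\})$. On $[a,0]$ we reflect $x=-y$ to obtain
$$\int_0^{-a} y^{\alpha+\delta}(-a-y)^{\beta}\ln^{\mu}(-a-y)\,\tilde\psi(y)J_v(\omega y)\,\mathrm{d}y,$$
up to a constant. This is exactly of the form \eqref{eq13} with endpoint $-a$, giving $\max\{\omega^{-\alpha-\delta-1},\ln^{\mu}\omega\,\omega^{-\min\{\beta+3/2,3/2\}}\}$. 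Combining the two pieces gives the stated bound, since $\omega^{-3/2}$ is dominated by the second term.

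The main obstacle is the bookkeeping in these last two cases. The amplitude produced after absorbing factors must satisfy $\tilde\psi\in C$ and $\tilde\psi'\in L^1$. This holds because the absorbed factors, namely $(x-a)^{\beta}\ln^{\mu}(x-a)$ away from $a$, or $|x|^{\delta}$ combined with $x^{\alpha}$ into a single power, are smooth on the relevant closed subinterval. The second point to check is that the integrability hypotheses match those of Lemma \ref{lemma3.2} after merging powers. When $b=0$ in the case $a<0\le b$, the piece on $[0,b]$ is empty, and the bound remains valid.
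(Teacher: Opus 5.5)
Your proposal is correct and follows the paper's proof essentially step for step. It uses the same four-way case split and absorbs the smooth factors into the amplitude. It applies \eqref{eq16}/\eqref{eq17} away from the origin (reflecting when $b<0$), \eqref{eq12} when $a=0$, and, when $a<0\le b$, splits at $0$, reflects, and uses \eqref{eq13} together with \eqref{eq12}. One small bookkeeping point: for \eqref{eq19}, the reflection $x\mapsto -x$ moves the factor $(b-x)^{\beta}\ln^{\mu}(b-x)$ to the left endpoint of the new interval, so the cases $a<b<0$ and $a<b=0$ actually invoke \eqref{eq16} and \eqref{eq12}, not \eqref{eq17} and \eqref{eq13}; this does not affect the argument.
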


\begin{proof}
The proof is divided into {four} cases:

\noindent \textbf{Case (i): $0<a<b$.} According to Equation \eqref{eq16}, we have
\begin{align*}
\int_a^{b} x^{\alpha+\delta}\ln^{\mu}(x-a) (x-a)^{\beta}\psi(x) J_v(\omega x) \mathrm{d}x
=\mathcal{O} \left(\max \left\{\omega^{-\beta-3/2}\ln^{\mu}\omega, \omega^{-3/2} \right\}\right),
\end{align*}
{where we used the fact that $x^{\alpha+\delta}\psi(x)\in C[a,b]$ and $(x^{\alpha+\delta}\psi(x))^{\prime}\in L^{1}[a,b]$}.

\noindent \textbf{Case (ii): $0=a<b$.} Applying Equation \eqref{eq12}, we obtain
\begin{align*}
\int_0^{b} x^{\alpha+\delta+\beta}\ln^{\mu}(x) \psi(x)J_v(\omega x) \mathrm{d}x=
\mathcal{O} \left(\max \left\{\omega^{-\alpha-\delta-\beta-1}\ln^{\mu}\omega,\omega^{-3/2} \right\}\right).
\end{align*}
{
\noindent \textbf{Case (iii): $a<0\leq b$.} By Equation \eqref{eq13}, we estimate
\begin{equation*}
\aligned
& \int_{a}^b x^{\alpha}|x|^{\delta}  \ln^{\mu}(x-a) (x-a)^{\beta}\psi(x) J_v(\omega |x|) \mathrm{d}x
\\&= (-1)^\delta\int_{a}^0 x^{\alpha+\delta}\ln^{\mu}(x-a) (x-a)^{\beta}\psi(x) J_v(-\omega x) \mathrm{d}x\\
&\quad+\int_{0}^b x^{\alpha+\delta} \ln^{\mu}(x-a) (x-a)^{\beta}\psi(x) J_v(\omega x) \mathrm{d}x
\\&=(-1)^{\alpha}\int_{0}^{-a} x^{\alpha+\delta} \ln^{\mu}(-a-x) (-a-x)^{\beta}\psi(-x) J_v(\omega x) \mathrm{d}x
+\mathcal{O}\left(\max \left\{\omega^{-\alpha-\delta-1},\omega^{-3/2}\right\}\right)
\\&=\mathcal{O}\left(\max \left\{ \frac{1}{\omega^{\alpha+\delta+1}},\frac{\ln^{\mu}(\omega)}{\omega^{\min \left\{\beta+3/2,3/2 \right\}}} \right\}\right)
+\mathcal{O}\left(\max \left\{\omega^{-\alpha-\delta-1},\omega^{-3/2}\right\}\right)
\\&=\mathcal{O}\left(\max \left\{ \frac{1}{\omega^{\alpha+\delta+1}},\frac{\ln^{\mu}\omega}{\omega^{\min \left\{\beta+3/2,3/2 \right\}}} \right\}\right),
\endaligned
\end{equation*}
where we used the fact that $\ln^{\mu}(x-a)(x-a)^{\beta}\psi(x)\in C[0,b]$ and $(\ln^{\mu}(x-a)(x-a)^{\beta}\psi(x))^{\prime}\in L^{1}[0,b]$ for $a<0$ in the second equality. In particular, if $b=0$, the same estimate holds:
\begin{equation*}
\aligned
& \int_{a}^{0} x^{\alpha}|x|^{\delta} \ln^{\mu}(x-a) (x-a)^{\beta}\psi(x) J_v(\omega |x|) \mathrm{d}x
\\&=(-1)^{\alpha} \int_{0}^{-a} x^{\alpha+\delta} \ln^{\mu}(-a-x) (-a-x)^{\beta}\psi(-x) J_v(\omega x) \mathrm{d}x
\\&=\mathcal{O} \left(\max \left\{ \frac{1}{\omega^{\alpha+\delta+1}},\frac{\ln^{\mu}(\omega)}{\omega^{\min \left\{\beta+3/2,3/2 \right\}}} \right\}\right).
\endaligned
\end{equation*}
}


\noindent \textbf{Case (iv): $a< b<0$.} Using Equation~\eqref{eq17}, we get
\begin{equation*}
\aligned
& \int_{a}^b x^{\alpha}|x|^{\delta} \ln^{\mu}(x-a) (x-a)^{\beta}\psi(x) J_v(\omega |x|) \mathrm{d}x
\\&=(-1)^{\delta} \int_{a}^b x^{\alpha+\delta} \ln^{\mu}(x-a) (x-a)^{\beta}\psi(x) J_v(-\omega x) \mathrm{d}x
\\&=(-1)^{\alpha} \int_{-b}^{-a} x^{\alpha+\delta} \ln^{\mu}(-a-x) (-a-x)^{\beta}\psi(-x) J_v(\omega x) \mathrm{d}x
\\&=\mathcal{O} \left(\max \left\{\omega^{-\beta-3/2}\ln^{\mu}(\omega), \omega^{-3/2} \right\}\right).
\endaligned
\end{equation*}

The result in Equation \eqref{eq19} follows analogously by symmetry.
\end{proof}

With Lemma \ref{lemma3.2}, Lemma \ref{lemma3.4} and Lemma \ref{lemma3.5} at disposal, we now establish a connection between the Bessel function and the Laguerre or Hermite polynomial by using the Hilb-type formula in \eqref{eq9}. This connection enables us to derive the following asymptotic estimates presented in {Theorems} \ref{lemma3.6} and \ref{lemma3.7}.

\begin{theorem}\label{lemma3.6}
Let $\mu\in\mathbb{N}$, $\psi(x)\in C[a,b]$, $\psi'(x)\in L^1[a,b]$ and $\alpha,\beta,\tau$ are selected such that the following integrals are integrable. Then the following asymptotic results hold for $n\to\infty$.

\noindent(i) If $0=a<b<\infty$, we have
\begin{align}\label{eq20}
\int_0^{b} \ln^{\mu}(x) x^{\tau} (b-x)^{\beta}  {\rm e}^{-x} L_{n}^{(\alpha)}(x) \psi(x)\mathrm{d}x
=\mathcal{O}\left(\max \left\{{\ln^{\mu}(2\sqrt{n}) }{n^{\alpha-\tau-1}}
,{n^{\max \left\{\frac{\alpha-\beta}{2}-\frac{3}{4},\frac{\alpha}{2}-\frac{3}{4} \right\}}} \right\}\right),
\end{align}
and
\begin{align}\label{eq21}
\int_0^{b} \ln^{\mu}(b-x) x^{\tau} (b-x)^{\beta}  {\rm e}^{-x} L_{n}^{(\alpha)}(x) \psi(x)\mathrm{d}x
=
\mathcal{O}\left(\max \left\{{n^{\alpha-\tau-1}}
,{\ln^{\mu}(2\sqrt{n}) } {n^{\max \left\{\frac{\alpha-\beta}{2}-\frac{3}{4},\frac{\alpha}{2}-\frac{3}{4} \right\}}} \right\}\right).
\end{align}

\noindent(ii) If $0<a<b<\infty$, we have
\begin{align}\label{eq22}
\int_a^{b} \ln^{\mu}(x-a)  (x-a)^{\beta} \psi(x)  L_{n}^{(\alpha)}(x) \omega_{\alpha}(x)\mathrm{d}x
=
\mathcal{O}\left(\max \left\{{\ln^{\mu}(2\sqrt{n}) } n^{\frac{\alpha-\beta}{2}-\frac{3}{4}}
, {n^{ \frac{\alpha}{2}-\frac{3}{4} }} \right\}\right),
\end{align}
and
\begin{align}\label{eq23}
\int_a^{b} \ln^{\mu}(b-x)  (b-x)^{\beta} \psi(x)  L_{n}^{(\alpha)}(x) \omega_{\alpha}(x)\mathrm{d}x
=
\mathcal{O}\left(\max \left\{{\ln^{\mu}(2\sqrt{n}) } n^{\frac{\alpha-\beta}{2}-\frac{3}{4}}
, {n^{\frac{\alpha}{2}-\frac{3}{4} }} \right\}\right).
\end{align}
\end{theorem}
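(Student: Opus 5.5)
The plan is to convert each Laguerre integral into a Bessel transform using the Hilb-type formula of Lemma~\ref{lemma2.1}, and then apply the van der Corput-type estimates of Lemma~\ref{lemma3.2} and Lemma~\ref{lemma3.4}.

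\textbf{Rewriting the integrand.} I write
\[
{\rm e}^{-x}L_n^{(\alpha)}(x)=x^{-\alpha/2}{\rm e}^{-x/2}\Bigl[{\rm e}^{-x/2}x^{\alpha/2}L_n^{(\alpha)}(x)\Bigr].
\]
By \eqref{eq9}, the bracket equals $\frac{\tilde n^{-\alpha/2}\Gamma(n+\alpha+1)}{n!}J_\alpha(2\sqrt{\tilde n x})$ plus a remainder $R_n(x)$. Since $\Gamma(n+\alpha+1)/n!\sim n^{\alpha}$, the prefactor is of order $n^{\alpha/2}$.

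\textbf{Case (i), main term.} I substitute $x=u^2$, so $\mathrm{d}x=2u\,\mathrm{d}u$ and $J_\alpha(2\sqrt{\tilde n}\,u)$ becomes a Bessel transform with $\omega=2\sqrt{\tilde n}$ on $[0,\sqrt b]$. The integrand of \eqref{eq20} becomes
\[
2^{\mu+1}\ln^\mu(u)\,u^{2\tau-\alpha+1}(\sqrt b-u)^{\beta}\,\tilde\psi(u)\,J_\alpha(\omega u),
\qquad
\tilde\psi(u)=(\sqrt b+u)^{\beta}{\rm e}^{-u^2/2}\psi(u^2).
\]
The function $\tilde\psi$ satisfies the hypotheses of Lemma~\ref{lemma3.2}. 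Applying \eqref{eq12} with exponent $2\tau-\alpha+1$ gives
\[
\mathcal{O}\Bigl(\max\bigl\{\ln^\mu\omega\,\omega^{-(2\tau-\alpha+2)},\ \omega^{-\min\{\beta+3/2,3/2\}}\bigr\}\Bigr).
\]
Multiplying by $n^{\alpha/2}$ yields the two terms
\[
n^{\alpha/2}\,n^{-\tau+\alpha/2-1}=n^{\alpha-\tau-1},
\qquad
n^{\alpha/2-\min\{\beta/2+3/4,\,3/4\}}=n^{\max\{\frac{\alpha-\beta}{2}-\frac34,\ \frac{\alpha}{2}-\frac34\}},
\]
which are exactly \eqref{eq20}. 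Estimate \eqref{eq21} follows the same way from \eqref{eq13}. The only extra point is that $\ln(b-u^2)=\ln(\sqrt b-u)+\ln(\sqrt b+u)$; expanding the $\mu$-th power binomially produces lower log powers with smooth factors, which are harmless.

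\textbf{Case (ii), main term.} Here $0<a$, so the substitution $x=u^2$ maps $[a,b]$ to $[\sqrt a,\sqrt b]$. Writing $x-a=(u-\sqrt a)(u+\sqrt a)$ splits off a smooth factor, and the log again expands into lower powers times smooth functions. Lemma~\ref{lemma3.4} then gives $\mathcal{O}(\max\{\omega^{-\beta-3/2}\ln^\mu\omega,\ \omega^{-3/2}\})$. Multiplying by $n^{\alpha/2}$ gives \eqref{eq22} and \eqref{eq23}.

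\textbf{Remainder terms (main obstacle).} These must be shown to be no larger than the stated bounds.
\begin{itemize}
\item On $cn^{-1}\le x\le b$, the remainder is $x^{5/4}\mathcal{O}(n^{\alpha/2-3/4})$. Integrated against $x^{\tau-\alpha/2}$ times the integrable singular factors, this gives $\mathcal{O}(n^{\alpha/2-3/4})$, which is already dominated by the second term, provided $\tau-\alpha/2+5/4>-1$. If that fails, the lower limit $cn^{-1}$ contributes a term of order $n^{\alpha/2-3/4}\,n^{-(\tau-\alpha/2+9/4)}$. I must check this is bounded by $n^{\alpha-\tau-1}$; it is, since it equals $n^{\alpha-\tau-3}$.
\item On $0<x\le cn^{-1}$, the remainder $x^{\alpha/2+2}\mathcal{O}(n^{\alpha})$ gives
\[
n^{\alpha}\int_0^{c/n}x^{\tau+2}|\ln x|^\mu\,\mathrm{d}x=\mathcal{O}\bigl(n^{\alpha-\tau-3}\ln^\mu n\bigr),
\]
which is smaller than the first term. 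For $\alpha=0$, the extra factor $\log(1/(xn))$ only adds a harmless log that is absorbed.
\end{itemize}
In case (ii) only the first remainder regime occurs, since $x\ge a>0$, so the remainder there is simply $\mathcal{O}(n^{\alpha/2-3/4})$.

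Throughout, I also use $\ln\omega\asymp\ln(2\sqrt n)$ and $\tilde n\sim n$ to restate the bounds in the form given in the theorem.
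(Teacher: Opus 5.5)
Your proposal is correct and follows essentially the same route as the paper: the Hilb-type formula, the substitution $x=u^2$, Lemma~\ref{lemma3.2} for part (i) and Lemma~\ref{lemma3.4} for part (ii), and then $\sigma_n^{(\alpha)}=\mathcal{O}(n^{\alpha})$. Your treatment of the Hilb remainder is more careful than the paper's, which just writes $+\mathcal{O}(n^{\alpha/2-3/4})$: when $\tau-\alpha/2+9/4\le 0$ the endpoint $c/n$ only gives $\mathcal{O}(n^{\alpha-\tau-3}\ln^{\mu}n)$ (you dropped the $\ln^{\mu}n$ factor, and the borderline case carries one more log), but this is still absorbed by the first term $n^{\alpha-\tau-1}\ln^{\mu}(2\sqrt{n})$.
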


\begin{proof}
We begin by applying the substitution $x=t^2$ with $t>0$, it yields
\begin{align*}
\ln^{\mu}x=\ln^{\mu} t^2=2^{\mu} \ln^{\mu} t.
\end{align*}
By the Hilb-type formula \eqref{eq9}, the integral \eqref{eq20} becomes
\begin{align*}
\aligned
& \int_0^{b} \ln^{\mu}(x)  x^{\tau} (b-x)^{\beta} {\rm e}^{-x} L_{n}^{(\alpha)}(x) \psi(x) \mathrm{d}x
\\&=\frac{\Gamma(n+\alpha+1)}{n! \tilde{n}^{\frac{\alpha}{2}}}
\int_0^{b} \ln^{\mu}(x) x^{\frac{2\tau-\alpha}{2}}(b-x)^{\beta} J_{\alpha}  \big\{2(\tilde{n}x)^{1/2} \big\}
{\rm e}^{-\frac{x}{2}} \psi(x)  \mathrm{d}x +\mathcal{O} \left(n^{\frac{\alpha}{2}-\frac{3}{4}}\right)
\\&=\frac{2^{\mu+1}\Gamma(n+\alpha+1)}{n! \tilde{n}^{\frac{\alpha}{2}}}
\int_0^{\sqrt{b}} \ln^{\mu}(t) t^{2\tau-\alpha+1} \big(\sqrt{b}-t\big)^{\beta}  J_{\alpha}\big (2{\tilde{n}^{1/2}} t\big) \Psi(t) \mathrm{d}t +\mathcal{O} \left(n^{\frac{\alpha}{2}-\frac{3}{4}}\right),
\endaligned
\end{align*}
where $\tilde{n}=n+(\alpha+1)/2$ and $\Psi(t)={\rm e}^{-t^2/2}(\sqrt{b}+x)^{\beta}\psi(t^2)$. It is easy to verity that $\Psi(t)\in C \big[0,\sqrt{b}\big]$ and $\Psi(t)\in L^1 \big[0,\sqrt{b}\big]$. Therefore, by Lemma \ref{lemma3.2}, we have
\begin{align*}
\int_0^{\sqrt{b}} \ln^{\mu}(t) t^{2\tau-\alpha+1} \big(\sqrt{b}-t\big)^{\beta} J_{\alpha}\big (2{\tilde{n}^{1/2}} t\big) \Psi(t) \mathrm{d}t
=
\mathcal{O}\left(\max \left\{{\ln^{\mu}(2\sqrt{n}) }{n^{\alpha/2-\tau-1}}
,{n^{\max \left\{-\frac{\beta}{2}-\frac{3}{4},-\frac{3}{4} \right\}}} \right\}\right).
\end{align*}
In view of the asymptotic result \cite[(6.1.2)]{Abramowitz}
\begin{align}\label{eq24}
\sigma_n^{(\alpha)}=\frac{\Gamma(n+\alpha+1)}{n!}=\mathcal{O} \big(n^{\alpha}\big).
\end{align}
Then we have
\begin{align*}
\int_0^{b} \ln^{\mu}(x) x^{\tau} (b-x)^{\beta}  {\rm e}^{-x} L_{n}^{(\alpha)}(x) \psi(x)\mathrm{d}x
=
\mathcal{O}\left(\max \left\{{\ln^{\mu}(2\sqrt{n}) }{n^{\alpha-\tau-1}}
,{n^{\max \left\{\frac{\alpha-\beta}{2}-\frac{3}{4},\frac{\alpha}{2}-\frac{3}{4} \right\}}} \right\}\right).
\end{align*}

By employing similar techniques, along with Lemmas \ref{lemma3.2} and \ref{lemma3.4}, one can derive the remaining estimates \eqref{eq21}, \eqref{eq22} and \eqref{eq23}.

\end{proof}

\begin{theorem}\label{lemma3.7}
Let $\beta>-1$, $\mu\in\mathbb{N}$, $\psi(x)\in C[a,b]$ and $\psi'(x)\in L^1[a,b]$. Then, as $n\to\infty$, we have
\begin{align}\label{eq25}
\aligned
&\int_{a}^{b} \ln^{\mu}(b-x)  (b-x)^{\beta}  {\rm e}^{-x^2}  H_{n}(x) \psi(x)\mathrm{d}x
\\&=\left\{
\aligned
&\mathcal{O} \left(\ln^{\mu}(2\sqrt{n}) 2^n \Big(\frac n 2\Big) !\cdot
n^{\max \left\{-\frac \beta 2-1,-1\right\}} \right), &n \ \mathrm{is\ even}
\\&\mathcal{O} \left(\ln^{\mu}(2\sqrt{n}) 2^n \Big(\frac {n-1} 2\Big) !\cdot
n^{\max \left\{-\frac \beta 2-\frac 12,-\frac 12\right\}} \right), &n \ \mathrm{is\ odd}
\endaligned
\right.
\endaligned
\end{align}
and
\begin{align}\label{eq26}
\aligned
&\int_{a}^{b} \ln^{\mu}(x-a)  (x-a)^{\beta}  {\rm e}^{-x^2}  H_{n}(x) \psi(x)\mathrm{d}x
\\&=\left\{
\aligned
&\mathcal{O} \left(\ln^{\mu}(2\sqrt{n}) 2^n \Big(\frac n 2\Big) !\cdot
n^{\max \left\{-\frac \beta 2-1,-1\right\}} \right), &n \ \mathrm{is\ even}
\\&\mathcal{O} \left(\ln^{\mu}(2\sqrt{n}) 2^n \Big(\frac {n-1} 2\Big) !\cdot
n^{\max \left\{-\frac \beta 2-\frac 12,-\frac 12\right\}} \right), &n \ \mathrm{is\ odd}
\endaligned
\right. .
\endaligned
\end{align}
\end{theorem}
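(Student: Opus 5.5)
The plan is to reduce the Hermite integral to Laguerre integrals through the identities \eqref{eq8}, and then to Bessel transforms through the Hilb-type formula \eqref{eq9}. Once that is done, Lemma \ref{lemma3.5} applies directly, since it is built for integrands $x^{\alpha}|x|^{\delta}J_v(\omega|x|)$ on intervals in any position relative to the origin.

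\textbf{Even case.} Let $n=2m$. By \eqref{eq8}, $H_{2m}(x)=(-1)^m2^{2m}m!\,L_m^{(-1/2)}(x^2)$. Apply \eqref{eq9} with $\alpha=-1/2$ and argument $x^2$, restricting to the bounded range $x^2\le \max\{a^2,b^2\}$:
\begin{align*}
{\rm e}^{-x^2/2}|x|^{-1/2}L_m^{(-1/2)}(x^2)=\frac{\tilde m^{1/4}\Gamma(m+1/2)}{m!}J_{-1/2}\big(2\sqrt{\tilde m}\,|x|\big)+R_m(x),
\end{align*}
where $\tilde m=m+1/4$ and $R_m$ is the error term of \eqref{eq9}. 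Multiply by ${\rm e}^{-x^2/2}|x|^{1/2}$ and insert into \eqref{eq25}. The integrand becomes
\begin{align*}
|x|^{1/2}\ln^{\mu}(b-x)(b-x)^{\beta}J_{-1/2}(\omega|x|)\Psi(x),\qquad \omega=2\sqrt{\tilde m},\quad \Psi(x)={\rm e}^{-x^2/2}\psi(x).
\end{align*}
This matches \eqref{eq19} with $\alpha=0$, $\delta=1/2$, $v=-1/2$, so $\alpha+\delta+v=0>-1$.

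In the generic case the interval stays away from the origin, or $0$ is an interior point. Then \eqref{eq19} gives the bound $\max\{\omega^{-1/2},\omega^{-\beta-3/2}\ln^{\mu}\omega,\omega^{-3/2}\}$. The prefactor satisfies $\Gamma(m+1/2)/m!\cdot \tilde m^{1/4}=\mathcal{O}(m^{-1/4})$ by \eqref{eq24}, and $\omega\sim 2\sqrt{m}$, so the main term is $\mathcal{O}(\ln^{\mu}(2\sqrt n)\,n^{\max\{-\beta/2-1,-1\}})$ when $0\notin[a,b]$.

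The error term contributes $x^{5/4}\mathcal{O}(m^{-1/4-3/4})=\mathcal{O}(m^{-1})$ after integration against the integrable weight $(b-x)^\beta\ln^\mu(b-x)$. Near the origin, the piece $0<x^2\le c/m$ is even smaller. Multiplying everything by $2^{2m}m!=2^n(n/2)!$ yields the claimed even bound.

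\textbf{Odd case.} Let $n=2m+1$. By \eqref{eq8}, $H_{2m+1}(x)=(-1)^m2^{2m+1}m!\,x\,L_m^{(1/2)}(x^2)$. Apply \eqref{eq9} with $\alpha=1/2$. The integrand becomes
\begin{align*}
x|x|^{-1/2}\ln^{\mu}(b-x)(b-x)^{\beta}J_{1/2}(\omega|x|)\Psi(x),
\end{align*}
with prefactor $\tilde m^{-1/4}\Gamma(m+3/2)/m!=\mathcal{O}(m^{1/4})$. Lemma \ref{lemma3.5} and \eqref{eq19} with $\alpha=1$, $\delta=-1/2$, $v=1/2$ give $\max\{\omega^{-\beta-3/2}\ln^{\mu}\omega,\omega^{-3/2}\}$. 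Multiplying by $m^{1/4}$ and $\omega\sim 2\sqrt{m}$ gives $n^{\max\{-\beta/2-1/2,-1/2\}}\ln^{\mu}(2\sqrt n)$.

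The error term $x^{5/4}\mathcal{O}(m^{1/4-3/4})=\mathcal{O}(m^{-1/2})$ is of the same order. Multiplying by $2^{2m+1}m!=2^n((n-1)/2)!$ gives the odd bound.

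The estimate \eqref{eq26} follows identically from \eqref{eq18} instead of \eqref{eq19}.

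\textbf{Main obstacle.} The delicate part is uniformity, for two reasons.

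First, the Hilb formula is stated for $0<x^2\le\omega$ with two error regimes, and these must be integrated across $x=0$ when $a<0<b$. This requires splitting $[a,b]$ at $0$, using $H_n(-x)=(-1)^nH_n(x)$, and treating the small region $|x|\le\sqrt{c/m}$ separately with the bound $x^{\alpha/2+2}\mathcal{O}(m^{\alpha})$.

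Second, the even case with $0\in[a,b]$ produces a boundary contribution of order $\omega^{-\alpha-\delta-1}=\omega^{-3/2}$ from \eqref{eq19}, which is harmless. I would verify this carefully, together with $\delta$-parity sign issues such as $(-1)^{\delta}$ for noninteger $\delta$. Where the lemma's sign conventions are awkward, I would instead fold the negative half back by the parity of $H_n$, so that only integrals over $[0,\cdot]$ appear.
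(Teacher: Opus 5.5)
Your proposal is correct and follows the paper's own route. You reduce via \eqref{eq8} to $L_{n/2}^{(-1/2)}(x^2)$ or $x\,L_{(n-1)/2}^{(1/2)}(x^2)$, apply the Hilb formula \eqref{eq9} (prefactors $\mathcal{O}(n^{-1/4})$ and $\mathcal{O}(n^{1/4})$, remainders $\mathcal{O}(n^{-1})$ and $\mathcal{O}(n^{-1/2})$), and bound the Bessel integrals with \eqref{eq19}, respectively \eqref{eq18}. The one blemish is the stray $\omega^{-1/2}$ in your first maximum: with $\alpha+\delta=1/2$, \eqref{eq19} gives only $\omega^{-3/2}$ when $0\in[a,b]$, as you note later, so the bound holds wherever $[a,b]$ lies.
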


\begin{proof}
Using Equation \eqref{eq8}, the integral can be expressed as follows:
\begin{align*}
\aligned
 &\int_{a}^{b} \ln^{\mu}(b-x)  (b-x)^{\beta}  {\rm e}^{-x^2}  H_{n}(x) \psi(x)\mathrm{d}x
 \\&=
 \left\{
 \aligned
 &(-1)^{\frac n 2} 2^n \Big(\frac n 2\Big) !
 \int_{a}^{b} \ln^{\mu}(b-x)  (b-x)^{\beta}  {\rm e}^{-x^2}   L_{n/2}^{(-1/2)}(x^2)\psi(x) \mathrm{d}x,
 &n \textrm{ is even},
 \\&(-1)^{\frac {n-1}{2}} 2^n   \Big(\frac {n-1} {2}\Big) !
 \int_{a}^{b} \ln^{\mu}(b-x)  (b-x)^{\beta}  {\rm e}^{-x^2} x L_{(n-1)/2}^{(1/2)}(x^2)\psi(x) \mathrm{d}x,
 &n \textrm{ is odd}.
 \endaligned
  \right.
 \endaligned
\end{align*}
For even values of $n$, applying Hilb-type asymptotic formula \eqref{eq9} and Equation \eqref{eq19}, we obtain:
\begin{align*}
\aligned
 &\int_{a}^{b} \ln^{\mu}(b-x)  (b-x)^{\beta}   {\rm e}^{-x^2}   L_{n/2}^{(-1/2)}(x^2)\psi(x) \mathrm{d}x
 \\&=\mathcal{O}(n^{-\frac 1 4})\int_{a}^{b} |x|^{\frac 12} \ln^{\mu}(b-x)  (b-x)^{\beta}  {\rm e}^{-\frac{x^2}{2}} \psi(x) J_{-1/2} \big(2 \bar{n}^{1/2} |x| \big) \mathrm{d}x+\mathcal{O}(n^{-1})
 \\&=\mathcal{O} \left(\ln^{\mu}(2\sqrt{n}) n^{\max \left\{-\frac \beta 2-1,-1\right\}} \right),
 \endaligned
\end{align*}
where $\bar{n}=n/2+1/4$. For odd values of $n$, we similarly obtain:
\begin{align*}
\aligned
 &\int_{a}^{b} \ln^{\mu}(b-x)  (b-x)^{\beta} {\rm e}^{-x^2} x L_{(n-1)/2}^{(1/2)}(x^2)\psi(x) \mathrm{d}x
 \\&=\mathcal{O}(n^{\frac 1 4})\int_{a}^{b}  x|x|^{-\frac 12} \ln^{\mu}(b-x)  (b-x)^{\beta}  {\rm e}^{-\frac{x^2}{2}}\psi(x) J_{1/2} \big(2 \bar{n}^{1/2} |x| \big) \mathrm{d}x+\mathcal{O}(n^{-\frac 12})
  \\&=\mathcal{O} \left(\ln^{\mu}(2\sqrt{n}) n^{\max \left\{-\frac \beta 2-\frac 12,-\frac 12\right\}} \right).
 \endaligned
\end{align*}
Thus, the desired result in \eqref{eq25} is derived. A similar approach leads to the estimate in \eqref{eq26} by formula \eqref{eq9} and Equation \eqref{eq18}.
\end{proof}
\section{Asymptotics on Laguerre coefficients and convergence rates on Laguerre orthogonal projections for functions with algebraic and logarithmic regularities}

\subsection{Function with boundary regularities}

Consider the function
\begin{align}\label{eq27}
f(x)=x^{\delta}\ln^{\mu}(x)g(x),
\end{align}
where $\mu\in\mathbb{N}$ and $g(x)\in C^{\infty}[0,\infty)$, such that $\int_0^{\infty} {\rm e}^{-x} x^{\alpha} f(x) \mathrm{d}x<\infty$ with $\alpha>-1$.

\begin{theorem}\label{theorem4.1}
Let $f(x)$ be defined by \eqref{eq27}. Suppose that $f(x)$ satisfies the following conditions for $i=0,1,\ldots,k$ with $k$ being the smallest integer greater than or equal to $\alpha+2\delta+1/2${\rm :}
\begin{align}\label{eq28}
\lim_{x\to\infty} {\rm e}^{-x/2}x^{\alpha+i+1} f^{(i)}(x)=0,\
\int_b^{\infty} {\rm e}^{-x/2}x^{\alpha+k+1} \left|f^{(k+1)}(x)\right| \mathrm{d}x<\infty,
\end{align}
where $b>0$ is a fixed positive constant. Then, for $\alpha+\delta>-1$ and $n\to\infty$, the Laguerre coefficient $a_{n}(\alpha)$ given by \eqref{eq3} satisfies the following asymptotic estimate
\begin{align}\label{eq29}
| a_n (\alpha)|=\mathcal{O} \left(n^{ -\alpha-\delta-1} \ln^{\mu}(2\sqrt{n})\right).
\end{align}
\end{theorem}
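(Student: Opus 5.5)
The plan is to split the coefficient integral at a fixed point $b>0$ and treat the two pieces differently. Write
\begin{align*}
\sigma_n^{(\alpha)} a_n(\alpha)=\int_0^{b} x^{\alpha+\delta}\ln^{\mu}(x)\,g(x)\,{\rm e}^{-x}L_n^{(\alpha)}(x)\,\mathrm{d}x+\int_b^{\infty} f(x)\,x^{\alpha}{\rm e}^{-x}L_n^{(\alpha)}(x)\,\mathrm{d}x .
\end{align*}
By \eqref{eq24}, $\sigma_n^{(\alpha)}\asymp n^{\alpha}$, so it suffices to show that both pieces are $\mathcal{O}\big(n^{-\delta-1}\ln^{\mu}(2\sqrt n)\big)$.

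\textbf{Local piece.} A sharp cutoff at $b$ would create an artificial endpoint contribution. To avoid this, multiply $g$ by a smooth cutoff $\chi$ with $\chi\equiv1$ on $[0,b/2]$ and $\chi\equiv0$ on $[b,\infty)$, and move $(1-\chi)f$ into the tail piece. The local integral then has the form of \eqref{eq20} with $\tau=\alpha+\delta$, $\beta=0$ and $\psi=g\chi$. One subtlety: in \eqref{eq20} the factor $(b-x)^\beta$ is still absorbed through Lemma \ref{lemma3.2}, which yields the $n^{\alpha/2-3/4}$ term. That bound comes from the smooth vanishing at $b$, and it is too weak when $\delta$ is large. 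So we cannot apply \eqref{eq20} directly.

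Instead, first perform repeated integration by parts via Rodrigues' formula, $\frac{\mathrm{d}}{\mathrm{d}x}\big[x^{\alpha+1}{\rm e}^{-x}L_{n-1}^{(\alpha+1)}(x)\big]=n\,x^{\alpha}{\rm e}^{-x}L_n^{(\alpha)}(x)$. Each step gains a factor $n^{-1}$ and raises the Laguerre parameter by one. After $j$ steps the integrand is $x^{\alpha+j}{\rm e}^{-x}L_{n-j}^{(\alpha+j)}(x)\,(\chi f)^{(j)}(x)$, with $(\chi f)^{(j)}\sim x^{\delta-j}\ln^{\mu}x$ near $0$. Boundary terms at $0$ vanish as long as $\alpha+j+\delta-j+1>0$, i.e.\ $\alpha+\delta>-1$. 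Boundary terms at infinity vanish by \eqref{eq28}, applied to the full $f$ after recombining the pieces.

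Now apply \eqref{eq20} with parameter $\alpha+j$, $\tau=\alpha+\delta$ and $\beta=0$ (the cutoff region contributes a smooth $\psi$). This gives
\begin{align*}
n^{-j}\max\Big\{\ln^{\mu}(2\sqrt n)\,n^{(\alpha+j)-(\alpha+\delta)-1},\; n^{\frac{\alpha+j}{2}-\frac34}\Big\}.
\end{align*}
The first term equals $n^{-\delta-1}\ln^\mu(2\sqrt n)$ for every $j$. The second term is $n^{\alpha/2-j/2-3/4}$, which is dominated by $n^{\alpha-\alpha-\delta-1}\cdot n^{\alpha}$ once $j\ge \alpha+2\delta+1/2$. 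This is exactly why $k=\lceil\alpha+2\delta+1/2\rceil$ appears in the hypothesis. Note that the $x^{\tau}$ exponent stays at $\alpha+\delta$ because the $x^{j}$ gained from the weight cancels the $x^{-j}$ lost from differentiation. The logarithmic derivatives only produce lower powers of $\ln$, which are absorbed.

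\textbf{Tail piece.} For $\int_b^\infty$ after $k+1$ integrations by parts, use Lemma \ref{lemma2.2} with parameter $\alpha+k+1$ and $\lambda=(\alpha+k+1)/2$. This gives $\max_{x\ge b}{\rm e}^{-x/2}x^{(\alpha+k+1)/2}|L^{(\alpha+k+1)}_{n-k-1}|=\mathcal{O}(n^{(\alpha+k+1)/2-1/4})$. Combined with the integrability assumption in \eqref{eq28}, after writing ${\rm e}^{-x}x^{\alpha+k+1}={\rm e}^{-x/2}x^{(\alpha+k+1)/2}\cdot{\rm e}^{-x/2}x^{(\alpha+k+1)/2}$ (adjusting exponents so that \eqref{eq28} applies), the tail is $\mathcal{O}(n^{-(k+1)}n^{(\alpha+k+1)/2-1/4})=\mathcal{O}(n^{\alpha/2-k/2-3/4})$. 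By the choice of $k$, this is $\mathcal{O}(n^{-\delta-1})$.

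Dividing by $\sigma_n^{(\alpha)}$ yields \eqref{eq29}.

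\textbf{Main obstacle.} The delicate part is the bookkeeping. One must verify that all boundary terms vanish at $0$, which requires $\alpha+\delta>-1$ together with the log factor. One must also match the exponents in the tail estimate against \eqref{eq28}; the weight split there may require $\lambda$ slightly different from the one chosen above. Finally, the cutoff splitting needs care so that no spurious $n^{\alpha/2-3/4}$ term survives without enough integrations by parts.
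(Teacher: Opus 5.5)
Your proposal is correct and is essentially the paper's argument. The paper likewise integrates by parts $k$ times via Rodrigues' formula \eqref{eq30}, applies \eqref{eq20} with Laguerre parameter $\alpha+k$ and $\tau=\alpha+\delta$ on $[0,b]$ (where $k\ge\alpha+2\delta+\frac12$ is exactly what makes $n^{\frac{\alpha-k}{2}-\frac34}\le n^{-\delta-1}$), does one more integration by parts with Lemma \ref{lemma2.2} on the tail, and normalizes by \eqref{eq24}. Your smooth cutoff is only a cosmetic variant of the paper's sharp split at $b$, whose endpoint terms are of the same harmless order $n^{\frac{\alpha+k}{2}-\frac34}$ before the factor $1/(n)_k$; your explicit check that the boundary terms at $0$ vanish because $\alpha+\delta>-1$, and your uniform treatment of all $\delta$, supply details the paper leaves implicit. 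One slip to fix: the target in your local estimate should read $n^{-\delta-1}$, not $n^{\alpha-\alpha-\delta-1}\cdot n^{\alpha}$.
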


\begin{proof}
Recalling the Rodrigues formula for the generalized Laguerre polynomials (see, e.g., \cite[(5.1.5)]{Szego})
\begin{align}\label{eq30}
{\rm e}^{-x} x^{\alpha}  L_{n}^{(\alpha)}(x)
=\frac{1}{(n)_{k}}\frac{\mathrm{d}^k}{\mathrm{d}x^k}  \Big\{{\rm e}^{-x} x^{\alpha+k}  L_{n-k}^{(\alpha+k)}(x) \Big\},
\end{align}
where $(n)_{k}=n(n-1)\cdots(n-k+1)$ denotes the falling Pochhammer symbol. Then, the Laguerre coefficients can be expressed as
\begin{align}\label{eq31}\begin{aligned}
 a_n (\alpha)=&\frac{1}{\sigma_n^{(\alpha)}n} \int_0^{\infty} f(x) \mathrm{d}\left\{{\rm e}^{-x} x^{\alpha+1}L_{n-1}^{(\alpha+1)}(x)\right\}\\
 =&\frac{1}{\sigma_n^{(\alpha)}n}\left({\rm e}^{-x} x^{\alpha+1}L_{n-1}^{(\alpha+1)}(x)f(x)\Big|_{0}^{\infty}-\int_{0}^{\infty}f^{\prime}(x){\rm e}^{-x} x^{\alpha+1}L_{n-1}^{(\alpha+1)}(x)\mathrm{d}x\right)\\
 =&\cdots\\
 =&\frac{(-1)^{k}}{\sigma_n^{(\alpha)}(n)_{k}}\int_0^{\infty} {\rm e}^{-x} x^{\alpha+k}  L_{n-k}^{(\alpha+k)}(x) f^{(k)}(x) \mathrm{d}x\\
 =&\frac{(-1)^{k}}{\sigma_n^{(\alpha)}(n)_{k}}\left(\int_{0}^{b}{\rm e}^{-x} x^{\alpha+k}  L_{n-k}^{(\alpha+k)}(x) f^{(k)}(x) \mathrm{d}x+\int_{b}^{\infty}{\rm e}^{-x} x^{\alpha+k}  L_{n-k}^{(\alpha+k)}(x) f^{(k)}(x) \mathrm{d}x\right),
\end{aligned}
\end{align}
where repeated integration by parts is applied, and  the maximum estimate of the Laguerre polynomials \eqref{eq10}, together with condition \eqref{eq28} ensures that
\begin{equation*}
  \lim_{x\to+\infty}{\rm e}^{-x} x^{\alpha+i+1}L_{n-i-1}^{(\alpha+i+1)}(x)f^{(i)}(x)=0,\quad i=0,1,\cdots,k.
\end{equation*}

{If $\delta>0$ is non-integer}, applying Leibniz rule and Fa{\`{a}} di Bruno's formula yields
\begin{align*}
\aligned
  f^{(k)}(x)&=\sum_{i=0}^{k}\binom{k}{i}(\delta)_{i}x^{\delta-i}\sum_{j=0}^{k-i}\binom{k-i}{j}
  \Big(\ln^{\mu}x\Big)^{(j)}g^{(k-i-j)}(x),
\endaligned
\end{align*}
which, for simplicity, can be written as
\begin{align*}
  f^{(k)}(x)=x^{\delta-k}\sum_{j=0}^{\min\{\mu,k\}}h_{\mu-j}(x)\ln^{\mu-j}x,
\end{align*}
where $h_{\mu-j}(x)\in C^{\infty}[0,\infty)$ are smooth functions corresponding to the derivatives of $g(x)$.

Using Equation \eqref{eq20}, the finite integral part of $a_{n}(\alpha)$ can be estimated by
\begin{align*}
\aligned
&\int_{0}^{b}{\rm e}^{-x} x^{\alpha+k}  L_{n-k}^{(\alpha+k)}(x)f^{(k)}(x)\mathrm{d}x
\\&=\sum_{j=0}^{\min\{\mu,k\}}\int_0^{b} {\rm e}^{-x} x^{\alpha+\delta}  L_{n-k}^{(\alpha+k)}(x)h_{\mu-j}(x)\ln^{\mu-j}x \mathrm{d}x
\\&=\sum_{j=0}^{\min\{\mu,k\}}\mathcal{O}\left(n^{k-\delta-1} \ln^{\mu-j}(2\sqrt{n})\right)
\\&= \mathcal{O} \left(n^{k-\delta-1} \ln^{\mu}(2\sqrt{n})\right),
\endaligned
\end{align*}
where the estimate follows from the assumption $k-\delta-1\geq\frac{\alpha+k}{2}-\frac{3}{4}$. For the second term in Equation \eqref{eq31}, integrating by parts once more gives
\begin{align}\label{eq34}
\aligned
  &\int_b^{\infty} {\rm e}^{-x} x^{\alpha+k}  L_{n-k}^{(\alpha+k)}(x) f^{(k)}(x) \mathrm{d}x
  \\&=-\frac{1}{n-k} {\rm e}^{-x} x^{\alpha+k+1}  L_{n-k-1}^{(\alpha+k+1)}(x) f^{(k)}(x) \big|_{x=b}
  \\&\quad-\frac{1}{n-k} \int_b^{\infty} {\rm e}^{-x} x^{\alpha+k+1}  L_{n-k-1}^{(\alpha+k+1)}(x) f^{(k+1)}(x) \mathrm{d}x.
  \endaligned
\end{align}
According to the estimate in Lemma \ref{lemma2.2}, the integral in \eqref{eq34} satisfies
\begin{align*}
\int_b^{\infty} {\rm e}^{-x} x^{\alpha+k+1}  L_{n-k-1}^{(\alpha+k+1)}(x) f^{(k+1)}(x) \mathrm{d}x
=\mathcal{O} \left(n^{\frac{\alpha+k}{2}+\frac{1}{4}} \right) .
\end{align*}
Combining this with the asymptotic result \eqref{eq24}, we conclude that
\begin{align*}
 a_n (\alpha)=\mathcal{O} \left(n^{ -\alpha-\delta-1} \ln^{\mu}(2\sqrt{n})\right).
\end{align*}

If $\delta$ is a nonnegative integer, we require an additional condition for $k$, namely $k>\max \{\delta,\mu \}$, then
\begin{align*}
f^{(k)}(x)=\sum_{i=0}^{k} x^{\delta-k+i} \sum_{j=0}^{\mu-1} \ln^{\mu}(x) \phi_{i,j}(x) +x^{\delta} \ln^{\mu}(x)\phi_0 (x),
\end{align*}
where $\phi_0 (x), \phi_{i,j}(x) \in C^{\infty}[0,+\infty)$. By the similar above argument, we get the desired result \eqref{eq29}.

If {$\delta<0$}, by a similar proof as above, without integrating by parts, we can also obtain the desired result \eqref{eq29}.
\end{proof}

\begin{example}\rm{}
Figure \ref{fig:graph1} illustrates the optimal decay of the Laguerre expansion coefficients $| a_n (\alpha)|$ {for} the function $f(x)=x^{\delta} \ln^{\mu}(x)$, with various values of $\alpha$, $\delta$, and $\mu$. As observed in the figure, the asymptotic orders are consistent with the results established in Theorem \ref{theorem4.1}.
\end{example}

\begin{figure}[H]
  \centering
  \includegraphics[scale=0.45]{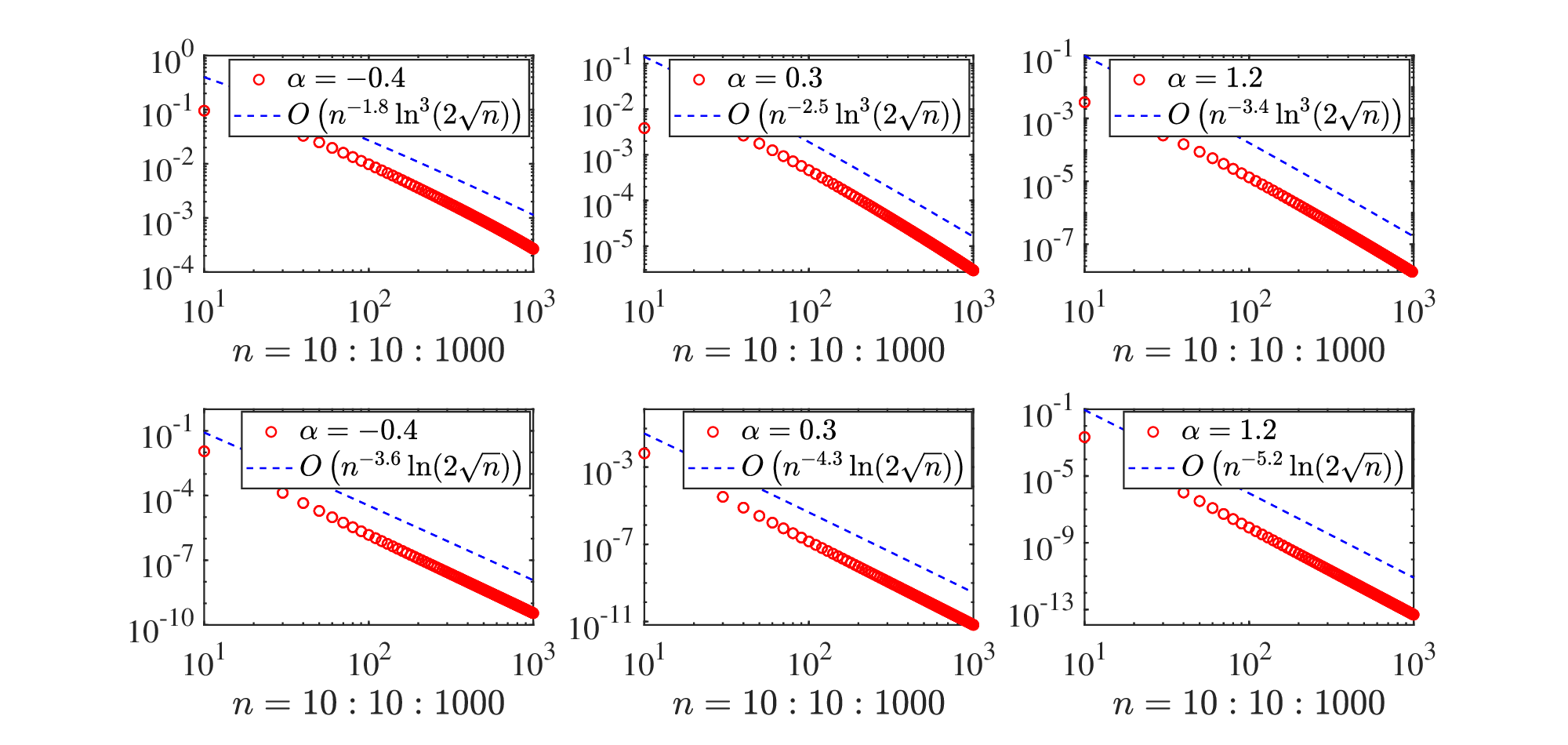}
  \caption{The asymptotic estimates of $| a_n (\alpha)|$ for $f(x)=x^{\delta} \ln^{\mu}(x)$: $\delta=1.2,\ \mu=3$ (first row); $\delta=3,\ \mu=3$ (second row). In all figures, the blue dashed lines represent the asymptotic orders provided in Theorem \ref{theorem4.1}.}
  \label{fig:graph1}
\end{figure}

\subsection{Function with interior regularities}

Consider the function
\begin{align}\label{eq35}
f(x)=|x-x_0|^{\gamma}\ln^{\mu}|x-x_0|g(x),\ |x_0|<\infty,
\end{align}
where $\mu\in\mathbb{N}$, $\gamma>0$, $g(x)\in C^{\infty}[0,\infty)$ such that $f(x)$ satisfies $\int_0^{\infty} {\rm e}^{-x} x^{\alpha} f(x) \mathrm{d}x<\infty$ for all $\alpha>-1$.

\begin{theorem}\label{theorem4.2}
Let $f(x)$ be defined by \eqref{eq35}. Suppose that $f(x)$ satisfies the following conditions for $i=0,1,\ldots,m$ with $k$ being the largest integer less than or equal to $\gamma$
\begin{align*}
\lim_{x\to\infty} {\rm e}^{-x/2}x^{\alpha+i+1} f^{(i)}(x)=0,\
\int_{b}^{\infty} {\rm e}^{-x/2}x^{\alpha+k+1} \left|f^{(k+1)}(x)\right| \mathrm{d}x<\infty,
\end{align*}
where $b>x_{0}$ is a fixed constant. Then, as $n\to\infty$, the Laguerre coefficient \eqref{eq3} satisfies
\begin{align}\label{eq36}
| a_n (\alpha)|=\mathcal{O} \left(n^{-\frac{\alpha+\gamma}{2}-\frac{3}{4}} \ln^{\mu}(2\sqrt{n})\right).
\end{align}
\end{theorem}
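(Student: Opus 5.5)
We follow the route of Theorem \ref{theorem4.1}, with the endpoint analysis replaced by an interior-point analysis that uses \eqref{eq22}--\eqref{eq23}. Here $x_0$ is taken in $(0,\infty)$, since otherwise the singularity is not interior. With $k=\lfloor\gamma\rfloor$, we apply the Rodrigues formula \eqref{eq30} and integrate by parts $k$ times, exactly as in \eqref{eq31}. This gives
\begin{align*}
a_n(\alpha)=\frac{(-1)^k}{\sigma_n^{(\alpha)}(n)_k}\int_0^\infty {\rm e}^{-x}x^{\alpha+k}L_{n-k}^{(\alpha+k)}(x)f^{(k)}(x)\,\mathrm{d}x .
\end{align*}
The boundary terms vanish for three reasons: at $\infty$ by the decay hypotheses together with Lemma \ref{lemma2.2}; at $0$ because of the factor $x^{\alpha+i+1}$; and at $x_0$ because $f^{(i)}$ is continuous there for $i<k\le\gamma$, since $|x-x_0|^{\gamma-i}\ln^{\mu}|x-x_0|\to0$. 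The prefactor satisfies $1/(\sigma_n^{(\alpha)}(n)_k)=\mathcal{O}(n^{-\alpha-k})$ by \eqref{eq24}.

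Next we fix $0<a<x_0<b$ and split $[0,\infty)=[0,a]\cup[a,x_0]\cup[x_0,b]\cup[b,\infty)$.

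\textbf{The interval $[0,a]$.} There $f^{(k)}$ is smooth, so \eqref{eq20} with $\mu=0$, $\tau=\alpha+k$ and a harmless $\beta$ (or Lemma \ref{lemma3.2} after the Hilb formula) gives a contribution $\mathcal{O}(n^{(\alpha+k)/2-3/4})$.

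\textbf{The interval $[b,\infty)$.} We integrate by parts once more as in \eqref{eq34}, using the integrability hypothesis on $f^{(k+1)}$ and Lemma \ref{lemma2.2}. This gives $\mathcal{O}(n^{-1}n^{(\alpha+k)/2+1/4})=\mathcal{O}(n^{(\alpha+k)/2-3/4})$.

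\textbf{The intervals $[a,x_0]$ and $[x_0,b]$.} By Leibniz's rule,
\begin{align*}
f^{(k)}(x)=|x-x_0|^{\gamma-k}\sum_{j=0}^{\min\{\mu,k\}}h_{j}^{\pm}(x)\ln^{\mu-j}|x-x_0|,
\end{align*}
with $h_j^{\pm}$ smooth on each side. On $[x_0,b]$ we apply \eqref{eq22} with left endpoint $x_0$, $\beta=\gamma-k\ge0$ and index $\alpha+k$. On $[a,x_0]$ we apply \eqref{eq23} with right endpoint $x_0$. In both cases the bound is
\begin{align*}
\mathcal{O}\Big(\ln^{\mu}(2\sqrt n)\, n^{\frac{\alpha+k-(\gamma-k)}{2}-\frac34}\Big),
\end{align*}
which dominates $n^{(\alpha+k)/2-3/4}$ because $\gamma-k\ge0$.

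\textbf{Combining.} Multiplying by the prefactor $n^{-\alpha-k}$ gives the exponent
\begin{align*}
-\alpha-k+\frac{\alpha+2k-\gamma}{2}-\frac34=-\frac{\alpha+\gamma}{2}-\frac34 ,
\end{align*}
which is \eqref{eq36}.

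\textbf{Main obstacle.} The delicate point is the integer case $\gamma=k$, where $\beta=0$. If $\mu\ge1$, $f^{(k)}$ then contains the pure $\ln^{\mu}|x-x_0|$ terms, plus lower-order pieces coming from derivatives hitting the logarithm. We must check that \eqref{eq22}--\eqref{eq23} still apply with $\beta=0$ and only a $\ln^{\mu}$ loss. They do, via Lemma \ref{lemma3.4} with $\beta=0$, which yields $\omega^{-3/2}\ln^{\mu}\omega$.

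A second, minor point is that the terms with $j>0$ arise with powers $|x-x_0|^{\gamma-k}$ (times bounded factors) and lower log powers, so they are dominated. We also verify that the hypotheses of Lemma \ref{lemma3.4} ($\psi\in C$, $\psi'\in L^1$) hold for the smooth cofactors after the Hilb substitution $x=t^2$, since $a>0$ keeps $t$ away from $0$. Finally, we must justify the boundary terms at $x_0$ in the last integration by parts, which is exactly why $k$ is chosen as $\lfloor\gamma\rfloor$ rather than larger.
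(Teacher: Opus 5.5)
Your argument works when $\gamma$ is an integer, and there it coincides with the paper's. It fails for non-integer $\gamma$. The faulty step is the claim that the bound from \eqref{eq22}--\eqref{eq23} with $\beta=\gamma-k$ ``dominates $n^{(\alpha+k)/2-3/4}$ because $\gamma-k\ge0$''. The inequality runs the other way: since $\gamma-k\ge0$,
\[
\frac{\alpha+k-(\gamma-k)}{2}-\frac34\le\frac{\alpha+k}{2}-\frac34 ,
\]
so for $k<\gamma<k+1$ the maximum in \eqref{eq22}--\eqref{eq23} is the term $n^{(\alpha+k)/2-3/4}$. These van der Corput-type bounds only use $\psi'\in L^1$ and never beat $\omega^{-3/2}$, so a nonnegative exponent $\beta$ gains nothing. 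After the prefactor $\mathcal{O}(n^{-\alpha-k})$ you therefore get only $\mathcal{O}\big(n^{-\frac{\alpha+k}{2}-\frac34}\ln^{\mu}(2\sqrt n)\big)$, which is weaker than \eqref{eq36} by a factor $n^{(\gamma-k)/2}$. The same $n^{-\frac{\alpha+k}{2}-\frac34}$ also comes out of your $[0,a]$ piece (through the artificial endpoint $a$) and your $[b,\infty)$ piece. So the defect cannot be repaired locally: at level $k$ the whole computation is too coarse.

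The missing idea, which is what the paper does, is to integrate by parts once more when $\gamma\notin\mathbb{N}$. Your closing remark that $k$ must be $\lfloor\gamma\rfloor$ ``rather than larger'' is exactly the misconception.
\begin{itemize}
\item $f^{(k)}$ is continuous at $x_0$, because $|x-x_0|^{\gamma-k}\ln^{\mu}|x-x_0|\to0$. Hence the boundary terms at $x_0$ cancel.
\item $f^{(k+1)}$ behaves like $|x-x_0|^{\gamma-k-1}$ times logarithmic factors, and $\gamma-k-1>-1$, so it is integrable.
\item Now take $\beta=\gamma-k-1\in(-1,0)$ and Laguerre index $\alpha+k+1$. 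The singular term becomes the dominant one, both in \eqref{eq21} (which the paper applies on all of $[0,x_0]$, with no split at $a$) and in \eqref{eq22}. It equals $n^{\frac{\alpha+2k+2-\gamma}{2}-\frac34}\ln^{\mu}(2\sqrt n)$.
\item The prefactor $1/(\sigma_n^{(\alpha)}(n)_{k+1})=\mathcal{O}(n^{-\alpha-k-1})$ turns this into exactly $n^{-\frac{\alpha+\gamma}{2}-\frac34}\ln^{\mu}(2\sqrt n)$.
\item The regular contributions become $\mathcal{O}\big(n^{-\frac{\alpha+k+1}{2}-\frac34}\big)$, which is negligible because $k+1>\gamma$.
\end{itemize}

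One caveat applies at level $k+1$, and the paper's one-line treatment of $[x_0,\infty)$ also glosses over it. On the tail $[b,\infty)$, Lemma \ref{lemma2.2} with only the stated integrability of $f^{(k+1)}$ still gives $\mathcal{O}\big(n^{-\frac{\alpha+k}{2}-\frac34}\big)$ after the prefactor. To make the tail negligible you need the decay and integrability conditions on $[b,\infty)$ one derivative higher, so that one further integration by parts is available there.
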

\begin{proof}
Observe that
\begin{align*}
a_n (\alpha)=\frac{1}{\sigma_n^{(\alpha)}} \left[\int_0^{x_0}+\int_{x_0}^{+\infty}\right] {\rm e}^{-x} x^{\alpha} f(x) L_{n}^{(\alpha)}(x) \mathrm{d}x,
\end{align*}
then we estimate the asymptotic behavior of each integral term separately.

\textbf{Case (i):} For $x\in[0,x_0]$, we have
\begin{align*}
f^{(k)}(x)=(x_0-x)^{\gamma-k}\sum_{j=0}^{\mu}\ln^{\mu-j}(x_{0}-x)e_{\mu-j}(x),
\end{align*}
where $e_{\mu-j}(x) \in C^{\infty}$ and vanish when $j>k$. Applying Rodrigues' formula \eqref{eq30}, we obtain
\begin{align}\label{eq38}
 I_1=\frac{1}{\sigma_n^{(\alpha)}} \int_0^{x_0} {\rm e}^{-x} x^{\alpha} f(x) L_{n}^{(\alpha)}(x) \mathrm{d}x=\frac{(-1)^{k}}
 { \sigma_n^{(\alpha)} (n)_{k}}\int_0^{x_0} {\rm e}^{-x} x^{\alpha+k}  L_{n-k}^{(\alpha+k)}(x) f^{(k)}(x) \mathrm{d}x.
\end{align}

If $\gamma$ is a positive integer, then $k=\gamma$. According to the assumptions, we have $\frac{\alpha+\gamma}{2}-\frac{3}{4}>-1$. By using Equation \eqref{eq21}, we get
\begin{align*}
\aligned
&\int_0^{x_0} {\rm e}^{-x} x^{\alpha+k}  L_{n-k}^{(\alpha+k)}(x)f^{(k)}(x)\mathrm{d}x
\\&=\sum_{j=0}^{\mu}\int_0^{x_0}{\rm e}^{-x} x^{\alpha+k}  L_{n-k}^{(\alpha+k)}(x) (x_0-x)^{\gamma-k}\ln^{\mu-j}(x_{0}-x)e_{\mu-j}(x) \mathrm{d}x
\\&=\sum_{j=0}^{\mu}\mathcal{O} \left(n^{\frac{\alpha+\gamma}{2}-\frac{3}{4}} \ln^{\mu-j}(2\sqrt{n}) \right)
\\&= \mathcal{O} \left(n^{\frac{\alpha+\gamma}{2}-\frac{3}{4}} \ln^{\mu}(2\sqrt{n}) \right).
\endaligned
\end{align*}
Combined with the normalization factor in \eqref{eq24}, this yields
\begin{align}\label{eq39}
I_1=\mathcal{O} \left(n^{-\frac{\alpha+\gamma}{2}-\frac{3}{4}} \ln^{\mu}(2\sqrt{n})\right).
\end{align}

If $\gamma$ is not an integer, then $k<\gamma<k+1$. By assumption, we have
\begin{align*}
  \frac{\alpha+2k-\gamma}{2}-\frac{3}{4}>\frac{\alpha+k}{2}-\frac{3}{4}>-1.
\end{align*}
Applying integration by parts to \eqref{eq38} yields
\begin{align*}
\aligned
 I_1&=\frac{(-1)^{k+1} }
 { \sigma_n^{(\alpha)} (n)_{k+1}}\int_0^{x_0} e^{-x} x^{\alpha+k+1}  L_{n-k-1}^{(\alpha+k+1)}(x) f^{(k+1)}(x) dx,
 \endaligned
\end{align*}
where
\begin{align*}
\aligned
f^{(k+1)}(x)&=(x_0-x)^{\gamma-k-1}\sum_{j=0}^{\mu}\ln^{\mu-j}(x_0-x)\tilde{e}_{\mu-j}(x),
\endaligned
\end{align*}
with $\tilde{e}_{\mu-j}(x) \in C^{\infty}[0,x_0]$. Using estimate \eqref{eq21} again, we obtain
\begin{align*}
\aligned
&\sum_{j=0}^{\mu}\int_0^{x_0} {\rm e}^{-x} x^{\alpha+k+1}  L_{n-k-1}^{(\alpha+k+1)}(x) (x_0-x)^{\gamma-k-1}\ln^{\mu-j}(x_0-x)\tilde{e}_{\mu-j}(x) \mathrm{d}x
\\&=\sum_{j=0}^{\mu}\mathcal{O}\left(n^{\frac{\alpha-\gamma}{2}+k+\frac{1}{4}} \ln^{\mu-j}(2\sqrt{n}) \right)
\\&= \mathcal{O} \left(n^{\frac{\alpha-\gamma}{2}+k+\frac{1}{4}} \ln^{\mu}(2\sqrt{n}) \right).
\endaligned
\end{align*}
{Together with \eqref{eq24}, it implies}
\begin{align}\label{eq41}
I_1=\mathcal{O} \left(n^{-\frac{\alpha+\gamma}{2}-\frac{3}{4}} \ln^{\mu}(2\sqrt{n})\right).
\end{align}

{Combining} \eqref{eq39} and \eqref{eq41}, we conclude that
\begin{align}\label{eq42}
\frac{1}{\sigma_n^{(\alpha)}} \int_0^{x_0} {\rm e}^{-x} x^{\alpha} f(x) L_{n}^{(\alpha)}(x) \mathrm{d}x
=\mathcal{O} \left(n^{-\frac{\alpha+\gamma}{2}-\frac{3}{4}} \ln^{\mu}(2\sqrt{n})\right).
\end{align}

\textbf{Case (ii):} For $x\in[x_0,+\infty)$, we write
\begin{align*}
\aligned
f^{(k)}(x)&=(x-x_0)^{\gamma-k}\sum_{j=0}^{\mu}\ln^{\mu-j}(x-x_0)p_{\mu-j}(x),
\endaligned
\end{align*}
where $p_{\mu-j} \in C^{\infty}[x_0,\infty)$ and vanish when $j>k$. By applying Rodrigues' formula again, we obtain
\begin{align*}
I_2=\frac{1}{\sigma_n^{(\alpha)}} \int_{x_0}^{\infty} {\rm e}^{-x} x^{\alpha} f(x) L_{n}^{(\alpha)}(x) \mathrm{d}x=\frac{(-1)^{k}}{\sigma_n^{(\alpha)} (n)_{k}} \int_{x_0}^{\infty} {\rm e}^{-x} x^{\alpha+k}  L_{n-k}^{(\alpha+k)}(x) f^{(k)}(x) \mathrm{d}x.
\end{align*}
The asymptotic estimate of $I_{2}$ follows by applying the same technique used in the proof of Theorem \ref{theorem4.1}.

Combining this with \eqref{eq42}, we conclude that the asymptotic behavior of $a_{n}(\alpha)$ satisfies
\begin{align*}
  a_{n}(\alpha)=\mathcal{O} \left(n^{-\frac{\alpha+\gamma}{2}-\frac{3}{4}} \ln^{\mu}(2\sqrt{n})\right),
\end{align*}
which is the desired result stated in \eqref{eq36}.
\end{proof}

\begin{remark}\rm{}
For $-1<\gamma \leq 0$ and $\alpha > -1$, a similar argument to that used in Theorem \ref{theorem4.2}, but without integration by parts, yields the following estimate
\begin{align*}
a_n (\alpha)=\mathcal{O} \left(n^{ -\frac{\alpha+\gamma}{2}-\frac{3}{4}} \ln^{\mu}(2\sqrt{n})\right).
\end{align*}
\end{remark}

\begin{example}\rm{}
Figure \ref{fig:graph2} illustrates the optimal decay of the Laguerre expansion coefficients $| a_n (\alpha)|$ of the function $f(x)=|x-0.3|^{\gamma}\ln^{\mu}|x-0.3|$ for different values of $\alpha$, $\gamma$, and $\mu$, respectively. As observed from the graph, the asymptotic behavior of the coefficients agrees with the estimates established in Theorem \ref{theorem4.2}.
\end{example}

\begin{figure}[H]
  \centering
  \includegraphics[scale=0.45]{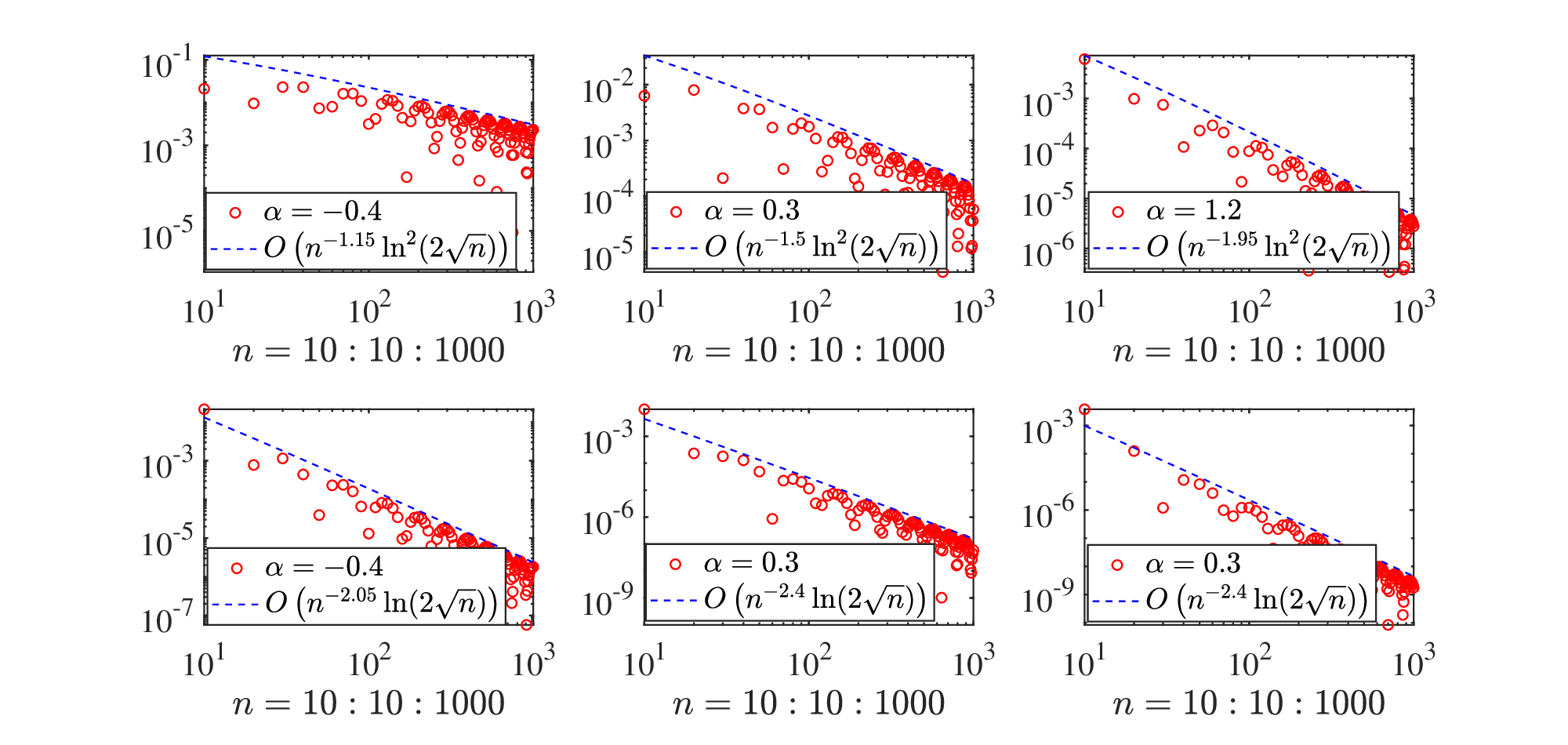}
  \caption{The asymptotic estimates of $| a_n (\alpha)|$ for $f(x)=|x-0.3|^{\gamma}\ln^{\mu}|x-0.3|$ with $\gamma=1.2,\ \mu=2$ (first row) and $\gamma=3,\ \mu=1$ (second row). In all figures, the blue dashed lines represent the asymptotic orders provided in Theorem \ref{theorem4.2}.}
  \label{fig:graph2}
\end{figure}

\subsection{The convergence rates on the Laguerre orthogonal projections}

For the function $f(x)=x^{\delta}\ln^{\mu}(x)g(x)$, it is straightforward to verify that $f\in L_{\omega_{\alpha}}^2 [0,+\infty)$ if $\alpha+2\delta>-1$. Similarly, for the function $f(x)=|x-x_0|^{\gamma}\ln^{\mu}|x-x_0|g(x)$, it can be observed that $f\in L_{\omega_{\alpha}}^2 [0,+\infty)$ if $\gamma>-1/2$. By applying the asymptotic estimates for the Laguerre coefficients of functions with algebraic and logarithmic singularities, as established in Theorems \ref{theorem4.1} and \ref{theorem4.2}, we will obtain the convergence rates of the Laguerre orthogonal projections for these respective functions.
\begin{theorem}\label{theorem4.3}
Let $f(x)$ be defined as in \eqref{eq27} and satisfy the assumptions in Theorem \ref{theorem4.1}.
Then for $\alpha+\delta>-1$, the Laguerre expansion follows as {$N\to\infty$} that
\begin{align*}
\aligned
&\|f(x)-S_N^{(\alpha)}[f](x)\|_{L_{\omega_{\alpha}}^2[0,+\infty)}=
\mathcal{O} \left(N^{ \frac{-\alpha-2\delta-1}{2}} \ln^{\mu}(2\sqrt{N})\right),\quad \alpha+2\delta>-1,
\\& \left\|{\rm e}^{-x/2} x^{\alpha/2} \big(f-S_{N}^{(\alpha)}[f] \big) \right\|_{L^{\infty}(0,\infty)}=\mathcal{O}\big(N^{-\frac{\alpha}{2}-\delta-\frac 14} \ln^{\mu}(2\sqrt{N})\big),\quad
\alpha+2\delta>-\frac 12.
\endaligned
\end{align*}
\end{theorem}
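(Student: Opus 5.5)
The first estimate follows directly from Parseval's identity \eqref{eq4} combined with Theorem \ref{theorem4.1}. Since $\sigma_n^{(\alpha)}=\mathcal{O}(n^{\alpha})$ by \eqref{eq24} and $|a_n(\alpha)|=\mathcal{O}(n^{-\alpha-\delta-1}\ln^{\mu}(2\sqrt{n}))$ by \eqref{eq29}, each term satisfies
\begin{align*}
a_n^2(\alpha)\sigma_n^{(\alpha)}=\mathcal{O}\big(n^{-\alpha-2\delta-2}\ln^{2\mu}(2\sqrt{n})\big).
\end{align*}
The tail sum over $n\ge N+1$ converges exactly when $\alpha+2\delta+2>1$, i.e.\ $\alpha+2\delta>-1$. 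I would compare it with $\int_N^\infty x^{-\alpha-2\delta-2}\ln^{2\mu}(2\sqrt{x})\,\mathrm{d}x$. Because the logarithm is slowly varying, this integral is $\mathcal{O}(N^{-\alpha-2\delta-1}\ln^{2\mu}(2\sqrt N))$; this can be seen from one integration by parts or from monotonicity of $x^{-\epsilon}\ln^{2\mu}$ for large $x$. Taking square roots gives the stated $L^2_{\omega_\alpha}$ rate.

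For the weighted uniform estimate, I would write
\begin{align*}
{\rm e}^{-x/2}x^{\alpha/2}\big(f-S_N^{(\alpha)}[f]\big)=\sum_{n>N}a_n(\alpha)\,{\rm e}^{-x/2}x^{\alpha/2}L_n^{(\alpha)}(x).
\end{align*}
This requires pointwise convergence of the series, which I would justify via the absolute convergence obtained below together with $L^2$ convergence. The plan is to bound $\sup_{x>0}{\rm e}^{-x/2}x^{\alpha/2}|L_n^{(\alpha)}(x)|$ uniformly, splitting into two ranges.

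\emph{Range $x\ge a$.} Lemma \ref{lemma2.2} with $\lambda=\alpha/2$ gives $\mathcal{O}(n^{\max\{\alpha/2-1/3,\alpha/2-1/4\}})=\mathcal{O}(n^{\alpha/2-1/4})$.

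\emph{Range $0<x\le a$.} I would use the Hilb formula \eqref{eq9}. Here $\tilde n^{-\alpha/2}\sigma_n^{(\alpha)}|J_\alpha(2\sqrt{\tilde n x})|$ is bounded by $\mathcal{O}(n^{\alpha/2}\cdot n^{-1/4}x^{-1/4})$ for $x\ge c/n$, since $|J_\alpha(z)|\le Cz^{-1/2}$. For $x\le c/n$ it is bounded by $\mathcal{O}(n^{\alpha/2}\cdot(nx)^{\alpha/2})$, which is $\mathcal{O}(n^{\alpha/2})$ when $\alpha\ge0$.

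Small $x$ is the main obstacle. The crude bound near the origin is $n^{\alpha/2}$ rather than $n^{\alpha/2-1/4}$, and for $-1<\alpha<0$ the factor $x^{\alpha/2}$ blows up. I expect the intended bound nonetheless to be $\max_{x\ge0}{\rm e}^{-x/2}x^{\alpha/2}|L_n^{(\alpha)}(x)|=\mathcal{O}(n^{\alpha/2-1/4})$ in the regime relevant here. I would try to obtain it from the second case of Lemma \ref{lemma2.2} together with the Hilb error terms in \eqref{eq9}; note that on $x\ge c/n$ the bound $n^{\alpha/2-1/4}x^{-1/4}$ becomes $n^{\alpha/2}$ only at $x\sim 1/n$.

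If that uniform estimate fails near $x=0$, I would instead handle the piece $x\le c/N$ separately. Since $f$ is controlled by $x^{\delta}\ln^{\mu}x$ there, and the condition $\alpha+2\delta>-1/2$ gives enough decay of the weight times the function, the piece would be absorbed into the same rate. Granting the uniform bound, the tail is
\begin{align*}
\sum_{n>N}n^{-\alpha-\delta-1}\ln^{\mu}(2\sqrt n)\,n^{\alpha/2-1/4}=\sum_{n>N}n^{-\alpha/2-\delta-5/4}\ln^{\mu}(2\sqrt n).
\end{align*}
This converges iff $\alpha/2+\delta+1/4>0$, i.e.\ $\alpha+2\delta>-1/2$, and by the same integral comparison it is $\mathcal{O}(N^{-\alpha/2-\delta-1/4}\ln^{\mu}(2\sqrt N))$, as claimed.
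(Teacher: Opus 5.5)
Your $L^2_{\omega_\alpha}$ argument is correct and is the paper's own: Parseval \eqref{eq4}, Theorem~\ref{theorem4.1}, $\sigma_n^{(\alpha)}=\mathcal{O}(n^{\alpha})$, and an integral comparison for the tail. The weighted $L^\infty$ part has a genuine gap exactly where you suspected, and neither of your repairs works.

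The bound $\sup_{x>0}{\rm e}^{-x/2}x^{\alpha/2}|L_n^{(\alpha)}(x)|=\mathcal{O}(n^{\alpha/2-1/4})$ is false. Lemma~\ref{lemma2.2} only covers $x\ge a$. For $x\asymp 1/n$, the main term of \eqref{eq9}, $\tilde n^{-\alpha/2}\sigma_n^{(\alpha)}J_\alpha(2\sqrt{\tilde n x})$, is genuinely of size $n^{\alpha/2}$, because $J_\alpha$ is of order one at arguments of order one. For $-1<\alpha<0$ the supremum is even infinite, since $L_n^{(\alpha)}(0)\neq 0$.

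Your fallback fails too. On $x\le c/N$ you can only estimate $x^{\alpha/2}f$, not $x^{\alpha/2}S_N^{(\alpha)}[f]$. Moreover, $x^{\alpha/2}|f(x)|$ at $x=1/N$ is already of size $N^{-\alpha/2-\delta}\ln^{\mu}N$ (when $g(0)\neq 0$), which exceeds the target $N^{-\alpha/2-\delta-1/4}\ln^{\mu}(2\sqrt N)$. You would need $S_N^{(\alpha)}[f]$ to cancel $f$ there to higher order, and it does not.

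In fact the second estimate is false on $(0,\infty)$, so no argument can close this gap. Take $\alpha=0$ and $f(x)=\sqrt{x}$; including a factor $\ln^{\mu}x$ only adds a factor $\ln^{\mu}N$ to both the true error and the claimed rate. Then $a_n(0)=-\Gamma(n-\tfrac12)/(4\,n!)\sim-\tfrac14 n^{-3/2}$, and by the binomial series $\sum_{n\ge 0}a_n(0)=0$. Since $L_n^{(0)}(0)=1$, this gives $(f-S_N^{(0)}[f])(0)=\sum_{n>N}a_n(0)\sim-\tfrac12 N^{-1/2}$, whereas the theorem asserts $\mathcal{O}(N^{-3/4})$. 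For $-1<\alpha<0$ the left-hand side is $+\infty$ whenever $(f-S_N^{(\alpha)}[f])(0)\neq 0$, which is the generic case.

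The paper's proof invokes only Lemma~\ref{lemma2.2}, Theorem~\ref{theorem4.1} and integration by parts, so it has the same hole as $x\to 0^+$. What your argument does prove is the stated rate on $[a,\infty)$ for any fixed $a>0$. On all of $(0,\infty)$, for $\alpha\ge 0$, combining \eqref{eq9} with $|J_\alpha|\le 1$ and Lemma~\ref{lemma2.2} gives $\sup_{x>0}{\rm e}^{-x/2}x^{\alpha/2}|L_n^{(\alpha)}(x)|=\mathcal{O}(n^{\alpha/2})$. This yields the rate $\mathcal{O}(N^{-\alpha/2-\delta}\ln^{\mu}(2\sqrt N))$ for $\alpha+2\delta>0$, and the example above shows that this order is sharp.
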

\begin{proof}
By Equation \eqref{eq4}, we have
\begin{align*}
\|f(x)-S_N^{(\alpha)}[f](x)\|_{L_{\omega_{\alpha}}^2[0,+\infty)}
=\left[{\sum_{n=N+1}^{\infty} a_n^2 (\alpha) \sigma_{n}^{(\alpha)}}\right]^{\frac 12},
\end{align*}
which, together with the estimate of $a_{n}(\alpha)$ in Theorem \ref{theorem4.1}.

Similarly, by substituting \eqref{eq4}, we get
\begin{align*}
{\rm e}^{-x/2} x^{\alpha/2} \left(f-S_{N}^{(\alpha)}[f] \right)
={\sum_{n=N+1}^{\infty} a_n (\alpha) \left({\rm e}^{-x/2} x^{\alpha/2} L_{n}^{(\alpha)}\right)},
\end{align*}
from which the second estimate follows directly by applying Lemma \ref{lemma2.2}, Theorem \ref{theorem4.1}, and integration by parts.
\end{proof}

\begin{example}\rm{}
Figure \ref{fig:graph3} shows the convergence of the Laguerre orthogonal projection $\|f-S_N^{(\alpha)}[f]\|_{\textrm{L}_{w_{\alpha}}^2 [0,+\infty)}$ for the function $f(x)=x^{\delta} \ln^{\mu}(x)$ with different values of $\alpha$, $\delta$, and $\mu$, respectively. As can be seen from the figure, these asymptotic orders are consistent with the results in Theorem \ref{theorem4.3}.
\end{example}

\begin{figure}[H]
  \centering
  \includegraphics[scale=0.45]{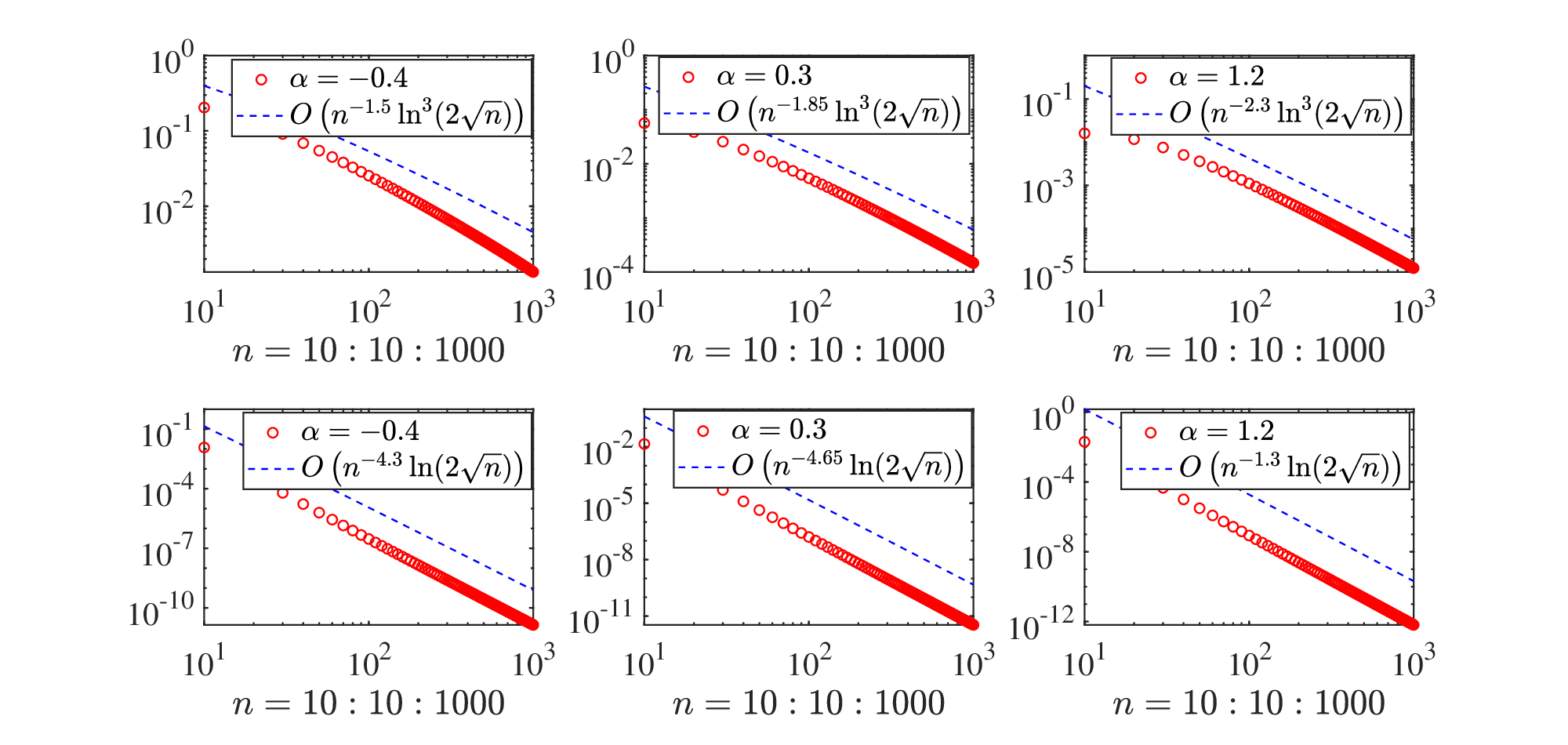}
  \caption{The asymptotic estimates of $\|f-S_N^{(\alpha)}[f]\|_{\textrm{L}_{w_{\alpha}}^2 [0,+\infty)}$ for $f(x)=x^{\delta} \ln^{\mu}(x)$: $\delta=1.2,\ \mu=3$ (first row); $\delta=4,\ \mu=1$ (second row). In all figures, the blue dashed lines represent the asymptotic orders provided in Theorem \ref{theorem4.3}.}
  \label{fig:graph3}
\end{figure}

\begin{theorem}\label{theorem4.4}
Let $f(x)$ be defined by \eqref{eq35} and satisfy the assumptions in Theorem \ref{theorem4.2}. Then, as {$N\to\infty$}, the Laguerre expansion follows that
\begin{align*}
\aligned
&\|f(x)-S_N^{(\alpha)}[f](x)\|_{L_{\omega_{\alpha}}^2[0,+\infty)}=
\mathcal{O} \left(N^{ -\frac{\gamma}{2}-\frac 1 4} \ln^{\mu}(2\sqrt{N})\right),\quad \gamma>-1/2,
\\& \left\|{\rm e}^{-x/2} x^{\alpha/2} \big(f-S_{N}^{(\alpha)}[f] \big) \right\|_{L^{\infty}(0,\infty)}=\mathcal{O}\Big(N^{-\frac{\gamma}{2}} \ln^{\mu}(2\sqrt{N})\Big),\quad
\gamma>0.
\endaligned
\end{align*}
\end{theorem}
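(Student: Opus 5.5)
The plan mirrors the proof of Theorem \ref{theorem4.3}, now with the coefficient bound \eqref{eq36} of Theorem \ref{theorem4.2} in place of \eqref{eq29}.

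\textbf{The $L^2_{\omega_\alpha}$ estimate.} Start from the Parseval-type identity \eqref{eq4}. By \eqref{eq24}, $\sigma_n^{(\alpha)}=\mathcal{O}(n^{\alpha})$. Combined with \eqref{eq36} this gives
\begin{align*}
a_n^2(\alpha)\sigma_n^{(\alpha)}=\mathcal{O}\left(n^{-\alpha-\gamma-\frac32+\alpha}\ln^{2\mu}(2\sqrt{n})\right)=\mathcal{O}\left(n^{-\gamma-\frac32}\ln^{2\mu}(2\sqrt{n})\right).
\end{align*}
The parameter $\alpha$ cancels exactly, which is why the final rate does not depend on it. The tail $\sum_{n>N} n^{-\gamma-3/2}\ln^{2\mu}(2\sqrt n)$ converges precisely when $\gamma>-1/2$. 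Comparing it with $\int_N^\infty x^{-\gamma-3/2}\ln^{2\mu}(2\sqrt x)\,\mathrm{d}x$ (integrate by parts, or note that the logarithm is slowly varying) shows the tail is $\mathcal{O}(N^{-\gamma-1/2}\ln^{2\mu}(2\sqrt N))$. Taking the square root gives the first claim.

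\textbf{The weighted $L^\infty$ estimate.} Write
\begin{align*}
{\rm e}^{-x/2}x^{\alpha/2}\big(f-S_N^{(\alpha)}[f]\big)=\sum_{n=N+1}^\infty a_n(\alpha)\,{\rm e}^{-x/2}x^{\alpha/2}L_n^{(\alpha)}(x)
\end{align*}
and bound each term uniformly in $x$. Split $(0,\infty)$ into two ranges.
\begin{itemize}
\item On $0<x\le a$, use the Hilb formula \eqref{eq9}. The main term is $\mathcal{O}(n^{\alpha/2})|J_\alpha(2\sqrt{\tilde n x})|$, and $|J_\alpha|$ is bounded. The error terms are of the same or smaller order on a bounded interval. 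This gives $\mathcal{O}(n^{\alpha/2})$.
\item On $x\ge a$, use Lemma \ref{lemma2.2} with $\lambda=\alpha/2$. This gives $\mathcal{O}(n^{\max\{\alpha/2-1/3,\alpha/2-1/4\}})=\mathcal{O}(n^{\alpha/2-1/4})$.
\end{itemize}
The naive worst-case uniform bound is therefore $\mathcal{O}(n^{\alpha/2})$. Multiplying by $|a_n|=\mathcal{O}(n^{-\frac{\alpha+\gamma}{2}-\frac34}\ln^\mu)$ gives terms of size $n^{-\gamma/2-3/4}\ln^{\mu}$. Summing from $N+1$ gives $\mathcal{O}(N^{-\gamma/2+1/4}\ln^\mu(2\sqrt N))$, which is worse than the claimed rate by a factor $N^{1/4}$.

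\textbf{The main obstacle.} Gaining this $N^{1/4}$ is the hardest step. The loss comes only from $x$ near $0$, where $J_\alpha$ is not yet oscillating. For $x\ge c/n$ one has $|J_\alpha(2\sqrt{\tilde n x})|\lesssim (nx)^{-1/4}$, which gives $\mathcal{O}(n^{\alpha/2-1/4}x^{-1/4})$.

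My first attempt is to sum by parts in $n$. On a neighborhood of the origin, where $f$ is smooth because $x_0>0$, the functions ${\rm e}^{-x/2}x^{\alpha/2}L_n^{(\alpha)}$ behave like a Bessel/Fourier series in $\sqrt{\tilde n x}$. Use the fact that $a_n$ has an asymptotic form of the type $n^{-\frac{\alpha+\gamma}{2}-\frac34}\ln^\mu(n)\cos(2\sqrt{n x_0}+\phi)$, which is what the van der Corput-type Lemmas \ref{lemma3.3}--\ref{lemma3.5} produce at the interior endpoint. The oscillation of $a_n$ in $\sqrt n$ then gives extra cancellation in the tail sum. This is the "integration by parts" mentioned in the proof of Theorem \ref{theorem4.3}.

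The fallback uses a threshold $x\sim 1/N$.
\begin{itemize}
\item For $x\le 1/N$, use $|J_\alpha(z)|\lesssim z^{\alpha}$ for every $n\le 1/x$, together with the $L^2$ tail bound from the first step.
\item For $x\ge 1/N$, the decaying bound $(nx)^{-1/4}$ handles the terms.
\end{itemize}
I expect this balancing to recover $N^{-\gamma/2}$ only under the restriction $\gamma>0$, which matches the hypothesis in the statement. I expect the precise bookkeeping near $x=0$ to be the delicate part.
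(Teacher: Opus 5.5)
Your $L^2_{\omega_\alpha}$ estimate is exactly the paper's argument. For the weighted $L^\infty$ estimate, the paper proceeds termwise. Lemma~\ref{lemma2.2} with $\lambda=\alpha/2$ gives $\mathcal{O}(n^{\alpha/2-1/4})$, so the terms are $\mathcal{O}(n^{-\gamma/2-1}\ln^{\mu}(2\sqrt n))$, which are summable exactly when $\gamma>0$. That is where the hypothesis $\gamma>0$ comes from, not from any balancing near the origin. The ``integration by parts'' there refers to the sum--integral comparison for the logarithmic factor, not to an oscillation argument.

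You are right that Lemma~\ref{lemma2.2} only covers $x\ge a$, and that near $x=0$ the functions ${\rm e}^{-x/2}x^{\alpha/2}L_n^{(\alpha)}(x)$ are genuinely of size $n^{\alpha/2}$. So the termwise argument only yields the bound on $[a,\infty)$. But your proposal does not close this gap, so the second estimate remains unproved.

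Your summation-by-parts route needs much more than $|a_n|=\mathcal{O}(\cdot)$. It needs an asymptotic formula $a_n=c\,n^{-\frac{\alpha+\gamma}{2}-\frac34}\ln^{\mu}(n)\cos(2\sqrt{nx_0}+\phi)+\cdots$ with an explicit phase, together with its lower-order logarithmic companions. It also needs a remainder of size $\mathcal{O}(n^{-\frac{\alpha+\gamma}{2}-1}\ln^{\mu}n)$, so that the remainder can be summed absolutely against $n^{\alpha/2}$. Lemmas~\ref{lemma3.3}--\ref{lemma3.5} and Theorem~\ref{theorem4.2} are upper bounds only: they produce no leading term and no phase. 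This input is therefore not ``what the lemmas produce''. It would have to be derived separately, via a genuine asymptotic expansion of the Bessel transform at $x_0$, with a sharper remainder than the $\mathcal{O}$-term in \eqref{eq9}.

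Your fallback cannot work in principle. Take $x=c/N$ with $c$ so small that $2\sqrt{\tilde n x}$ stays below the first positive zero of $J_\alpha$ for $N<n\le 2N$. By \eqref{eq9}, $|{\rm e}^{-x/2}x^{\alpha/2}L_n^{(\alpha)}(x)|\asymp n^{\alpha/2}$ on that block. Hence any bound obtained from $\sum_{n>N}|a_n|\,|{\rm e}^{-x/2}x^{\alpha/2}L_n^{(\alpha)}(x)|$, with only \eqref{eq36} available, is at least of order $N^{1/4-\gamma/2}\ln^{\mu}(2\sqrt N)$, whatever threshold you choose.

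Cauchy--Schwarz against the $L^2$ tail is worse. The sum $\sum_{n>N}\big({\rm e}^{-x/2}x^{\alpha/2}L_n^{(\alpha)}(x)\big)^2/\sigma_n^{(\alpha)}\approx\sum_{n>N}J_\alpha^2(2\sqrt{\tilde n x})$ diverges, since its terms are of average size $(nx)^{-1/2}$. So cancellation in $n$ is indispensable near the origin.

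Note also that your claim that $|J_\alpha|$ is bounded requires $\alpha\ge0$. For $-1<\alpha<0$, the factor $x^{\alpha/2}$ makes the weighted sup-norm infinite whenever $(f-S_N^{(\alpha)}[f])(0)\neq0$. So the second estimate needs a restriction such as $\alpha\ge0$ in any case.
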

\begin{proof}
By combining Theorem \ref{theorem4.2}, the results can be obtained using the same proof method as in Theorem \ref{theorem4.3}.
\end{proof}

\begin{example}\rm{}
Figure \ref{fig:graph4} shows the convergence of the Laguerre orthogonal projection $\|f-S_N^{(\alpha)}[f]\|_{\textrm{L}_{w_{\alpha}}^2 [0,+\infty)}$ for the function $f(x)=|x-0.3|^{\gamma}\ln^{\mu}|x-0.3|$ with different values of $\alpha$, $\gamma$, and $\mu$, respectively. As can be seen from the figure, these asymptotic orders are consistent with the results in Theorem \ref{theorem4.4}.
\end{example}

\begin{figure}[H]
  \centering
  \includegraphics[scale=0.45]{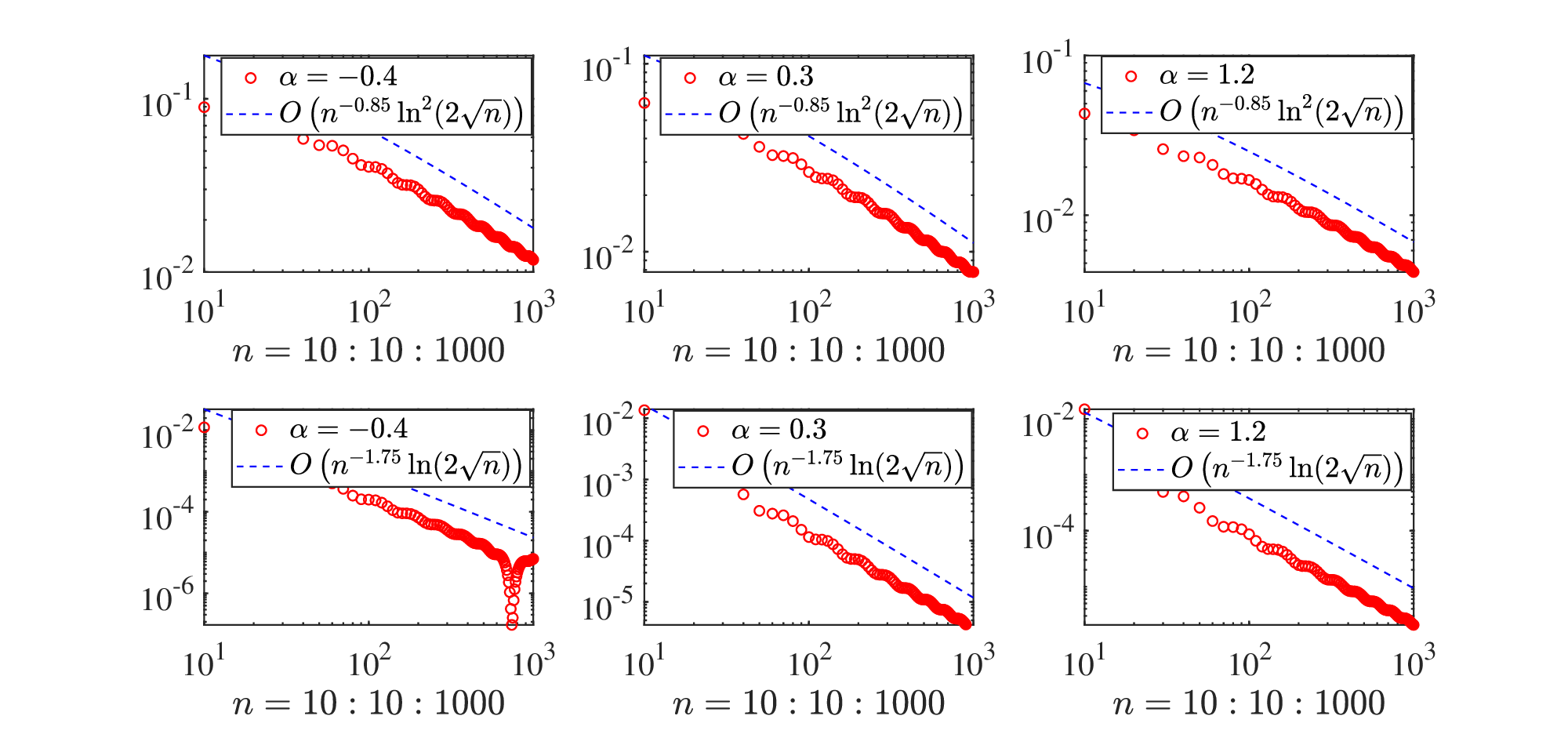}
  \caption{The asymptotic estimates of $\|f-S_N^{(\alpha)}[f]\|_{\textrm{L}_{w_{\alpha}}^2 [0,+\infty)}$ for $f(x)=|x-0.3|^{\gamma}\ln^{\mu}|x-0.3|$: $\gamma=1.2,\ \mu=2$ (first row); $\gamma=3,\ \mu=1$ (second row). In all figures, the blue dashed lines represent the asymptotic orders provided in Theorem \ref{theorem4.4}.}
  \label{fig:graph4}
\end{figure}

Moreover, {for} the non-uniformly Laguerre-weighted Sobolev space $H^{m,\alpha}(\Omega)$ \cite{Guo} with any integer $m\geq 0, \alpha>-1, \Omega=(0,+\infty)$, the weighted norm of $H^{m,\alpha}(\Omega)$ is defined by
\begin{align*}
\|u\|_{H^{m,\alpha}(\Omega)}=\left\{\sum_{q=0}^m \int_{0}^{+\infty}
{\rm e}^{-x}x^{\alpha+q} [u^{(q)}(x)]^2 \mathrm{d}x \right\}^{\frac{1}{2}},
\end{align*}
define
 \begin{align*}
f^{(q)}(x)=\sum_{n=0}^{\infty}a_n^{(q)}(\alpha+q) L_n^{(\alpha+q)}(x),\ q=0,1,...,m
\end{align*}
with
\begin{align*}
 a_n^{(q)} (\alpha+q)=\frac{1}{\sigma_n^{(\alpha+q)}} \int_0^{+\infty} {\rm e}^{-x} x^{\alpha+q} f^{(q)}(x) L_{n}^{(\alpha+q)}(x) \mathrm{d}x.
\end{align*}
From \eqref{eq1} and \eqref{eq31}, we get
\begin{align*}
\aligned
 a_n^{(q)} (\alpha+q)&=\frac{1}{\sigma_n^{(\alpha+q)}} \int_0^{+\infty} {\rm e}^{-x} x^{\alpha+q} f^{(q)}(x) L_{n}^{(\alpha+q)}(x) \mathrm{d}x
 \\&=\frac{ (-1)^{q} }{ \sigma_n^{(\alpha+q)} }
 (n+q)(n+q-1)\cdots(n+1)\int_0^{+\infty} {\rm e}^{-x} x^{\alpha} f(x) L_{n+q}^{(\alpha)}(x)  \mathrm{d}x
  \\&=\frac{(-1)^{q} \sigma_{n+q}^{(\alpha)} }{ \sigma_n^{(\alpha+q)} }
  (n+q)(n+q-1)\cdots(n+1)a_{n+q} (\alpha)
    \\&=a_{n+q} (\alpha) \mathcal{O}(1),
 \endaligned
\end{align*}
which together with Theorems \ref{theorem4.1} and \ref{theorem4.2}, we can obtain the following convergence rates.

For the function $f(x)=x^{\delta}\ln^{\mu}(x)g(x)$, it is easy to get $f\in H^{m,\alpha}(\Omega)$ if $\alpha+2\delta>m-1$. For the function $f(x)=|x-x_0|^{\gamma}\ln^{\mu}|x-x_0|g(x)$, it is realize that $f\in H^{m,\alpha}(\Omega)$ if $\gamma>m-\frac{1}{2}$.

\begin{corollary}
Let $f(x)$ be defined by \eqref{eq27} and satisfy the assumptions in Theorem \ref{theorem4.1}. Then for $\alpha+\delta>-1$ and {$N\gg 1$}, the Laguerre expansion follows that
\begin{align*}
\|f(x)-S_N^{(\alpha)}[f](x)\|_{H^{m,\alpha}(\Omega)}=
\mathcal{O} \left(N^{ \frac{m-\alpha-2\delta-1}{2}} \ln^{\mu}(2\sqrt{N})\right),\ \alpha+2\delta>m-1.
\end{align*}
\end{corollary}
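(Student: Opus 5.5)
We bound the $H^{m,\alpha}(\Omega)$ error term by term in $q$, reducing each derivative to a shifted tail of the original Laguerre coefficients. The norm is
\begin{align*}
\|f-S_N^{(\alpha)}[f]\|_{H^{m,\alpha}(\Omega)}^2=\sum_{q=0}^m \int_0^{+\infty}{\rm e}^{-x}x^{\alpha+q}\Big[\big(f-S_N^{(\alpha)}[f]\big)^{(q)}(x)\Big]^2\mathrm{d}x .
\end{align*}
We differentiate the truncated series using $\frac{\mathrm{d}^q}{\mathrm{d}x^q}L_n^{(\alpha)}=(-1)^qL_{n-q}^{(\alpha+q)}$. This gives $(S_N^{(\alpha)}[f])^{(q)}=\sum_{n=0}^{N-q}a_n^{(q)}(\alpha+q)L_n^{(\alpha+q)}$, which is exactly the truncation $S^{(\alpha+q)}_{N-q}[f^{(q)}]$ of the expansion of $f^{(q)}$ in $L_n^{(\alpha+q)}$. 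The condition $\alpha+2\delta>m-1$ guarantees $f\in H^{m,\alpha}(\Omega)$, so these expansions converge in the weighted norms. Parseval with respect to $\omega_{\alpha+q}$ then yields
\begin{align*}
\int_0^{+\infty}{\rm e}^{-x}x^{\alpha+q}\Big[\big(f-S_N^{(\alpha)}[f]\big)^{(q)}\Big]^2\mathrm{d}x=\sum_{n=N-q+1}^{\infty}\big[a_n^{(q)}(\alpha+q)\big]^2\sigma_n^{(\alpha+q)} .
\end{align*}

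Next we insert the identity derived just before the corollary, but with its constant kept explicit rather than absorbed into $\mathcal{O}(1)$:
\begin{align*}
a_n^{(q)}(\alpha+q)=(-1)^q\frac{\sigma_{n+q}^{(\alpha)}}{\sigma_n^{(\alpha+q)}}(n+q)\cdots(n+1)\,a_{n+q}(\alpha).
\end{align*}
By \eqref{eq24}, $\sigma_{n+q}^{(\alpha)}/\sigma_n^{(\alpha+q)}=\mathcal{O}(n^{-q})$ and $(n+q)\cdots(n+1)=\mathcal{O}(n^q)$, so $a_n^{(q)}(\alpha+q)=\mathcal{O}(1)\,a_{n+q}(\alpha)$. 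Combined with $\sigma_n^{(\alpha+q)}=\mathcal{O}(n^{\alpha+q})$ and Theorem~\ref{theorem4.1}, each summand is
\begin{align*}
\mathcal{O}\big(n^{\alpha+q}\,n^{-2\alpha-2\delta-2}\ln^{2\mu}(2\sqrt n)\big)=\mathcal{O}\big(n^{q-\alpha-2\delta-2}\ln^{2\mu}(2\sqrt n)\big).
\end{align*}

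The exponent $q-\alpha-2\delta-2$ is $<-1$ for all $q\le m$ precisely when $\alpha+2\delta>m-1$. Comparing with the integral $\int_{N-q}^\infty x^{q-\alpha-2\delta-2}\ln^{2\mu}(2\sqrt x)\,\mathrm{d}x$, and noting that the logarithm is slowly varying, the tail sum is $\mathcal{O}\big(N^{q-\alpha-2\delta-1}\ln^{2\mu}(2\sqrt N)\big)$. The shift $N-q$ versus $N$ only changes constants. Summing over $q=0,\dots,m$, the term $q=m$ dominates, and taking square roots gives the claimed rate $N^{\frac{m-\alpha-2\delta-1}{2}}\ln^{\mu}(2\sqrt N)$.

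The main obstacle is justifying that differentiation of the truncated series coincides with truncation of the series of $f^{(q)}$. Equivalently, one must confirm that the coefficient identity relating $a_n^{(q)}(\alpha+q)$ to $a_{n+q}(\alpha)$ holds under the hypotheses. That identity requires the boundary terms from integrating by parts $q$ times to vanish, both at $\infty$ and at $0$. At infinity this follows from \eqref{eq28} together with Lemma~\ref{lemma2.2}, as in the proof of Theorem~\ref{theorem4.1}. At $0$ it needs $x^{\alpha+i+1}f^{(i)}(x)\to0$, which holds since $\alpha+\delta-i+1+i>0$ because $\alpha+\delta>-1$. I would verify this first and then carry out the tail summation as above.
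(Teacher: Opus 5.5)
Your proposal is correct and follows essentially the same route as the paper: the coefficient identity $a_n^{(q)}(\alpha+q)=\mathcal{O}(1)\,a_{n+q}(\alpha)$ derived just before the corollary, Parseval in the weight $\omega_{\alpha+q}$, the bound of Theorem~\ref{theorem4.1}, and a tail summation over $n$ in which the $q=m$ term dominates. You also supply details the paper leaves implicit, namely differentiating $S_N^{(\alpha)}[f]$ term by term and checking that the boundary terms vanish at $0$ and at $\infty$.

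One small sharpening: $\sigma_{n+q}^{(\alpha)}/\sigma_n^{(\alpha+q)}=n!/(n+q)!$ exactly, so $a_n^{(q)}(\alpha+q)=(-1)^q a_{n+q}(\alpha)$ with no $\mathcal{O}(1)$ factor. You therefore do not need to bound a ratio using \eqref{eq24}, which only provides upper bounds.
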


\begin{corollary}
Let $f(x)$ be defined by \eqref{eq35} and satisfy the assumptions in Theorem \ref{theorem4.2}.
Then for {$N\gg 1$}, the Laguerre expansion follows that
\begin{align*}
\|f(x)-S_N^{(\alpha)}[f](x)\|_{H^{m,\alpha}(\Omega)}=
\mathcal{O} \left(N^{ \frac{m-\gamma}{2}-\frac 1 4} \ln^{\mu}(2\sqrt{N})\right),\ \gamma>m-\frac 12.
\end{align*}
\end{corollary}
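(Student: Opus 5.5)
The plan is to obtain the estimate from the coefficient decay of Theorem \ref{theorem4.2} together with the derivative–coefficient relation derived just before the corollaries. First, the $H^{m,\alpha}(\Omega)$ norm of $f-S_N^{(\alpha)}[f]$ is expanded term by term in $q$. Differentiating $L_n^{(\alpha)}$ gives $\frac{\mathrm d^q}{\mathrm dx^q}L_n^{(\alpha)}(x)=(-1)^qL_{n-q}^{(\alpha+q)}(x)$. Hence the $q$-th derivative of the tail is $\sum_{n\ge N+1}a_n(\alpha)(-1)^qL_{n-q}^{(\alpha+q)}$. This is exactly the tail of the expansion of $f^{(q)}$ in $\{L_n^{(\alpha+q)}\}$, with coefficients $a_{n}^{(q)}(\alpha+q)=(-1)^q a_{n+q}(\alpha)$, consistent with the identity displayed above. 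By orthogonality \eqref{eq1} with weight $\omega_{\alpha+q}$,
\begin{align*}
\int_0^{\infty}{\rm e}^{-x}x^{\alpha+q}\Big[\big(f-S_N^{(\alpha)}[f]\big)^{(q)}\Big]^2\mathrm dx=\sum_{n=N+1}^{\infty}a_n^2(\alpha)\,\sigma_{n-q}^{(\alpha+q)} .
\end{align*}
Membership $f\in H^{m,\alpha}(\Omega)$, guaranteed by $\gamma>m-\frac12$, justifies this termwise differentiation in $L^2_{\omega_{\alpha+q}}$.

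Second, insert the bounds. By \eqref{eq24}, $\sigma_{n-q}^{(\alpha+q)}=\mathcal O(n^{\alpha+q})$. By Theorem \ref{theorem4.2}, $a_n^2(\alpha)=\mathcal O\big(n^{-\alpha-\gamma-\frac32}\ln^{2\mu}(2\sqrt n)\big)$. Each summand is therefore $\mathcal O\big(n^{q-\gamma-\frac32}\ln^{2\mu}(2\sqrt n)\big)$. The series converges exactly when $q-\gamma-\frac32<-1$, i.e.\ $\gamma>q-\frac12$, and this holds for all $q\le m$ under the hypothesis. Comparing with an integral (the logarithm is slowly varying, so $\sum_{n>N}n^{-s-1}\ln^{2\mu}n=\mathcal O(N^{-s}\ln^{2\mu}N)$ for $s>0$), the $q$-th term is $\mathcal O\big(N^{q-\gamma-\frac12}\ln^{2\mu}(2\sqrt N)\big)$.

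Third, sum over $q=0,\dots,m$. The $q=m$ term dominates, so the squared norm is $\mathcal O\big(N^{m-\gamma-\frac12}\ln^{2\mu}(2\sqrt N)\big)$. Taking square roots gives the exponent $\frac{m-\gamma}{2}-\frac14$ with the factor $\ln^{\mu}(2\sqrt N)$, as claimed.

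The main obstacle is the first step: identifying the derivative of the truncation error with a tail of an $\alpha+q$ Laguerre series and justifying termwise differentiation. I would handle this through the displayed relation $a_n^{(q)}(\alpha+q)=a_{n+q}(\alpha)\mathcal O(1)$, which was obtained by repeated integration by parts using the Rodrigues formula \eqref{eq30}. Combined with Parseval in $L^2_{\omega_{\alpha+q}}$, this expresses the error in terms of the coefficients $a_{n+q}(\alpha)$, $n\ge N+1-q$. The finite index shift by $q$ does not change the order of the bound.
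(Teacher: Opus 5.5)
Your proposal is correct and follows the paper's route. The paper derives $a_n^{(q)}(\alpha+q)=a_{n+q}(\alpha)\mathcal{O}(1)$ (in fact exactly $(-1)^q a_{n+q}(\alpha)$, as you observe) from Rodrigues' formula, and then combines it with Parseval in $L^2_{\omega_{\alpha+q}}$, $\sigma_n^{(\alpha+q)}=\mathcal{O}(n^{\alpha+q})$ and the coefficient bound of Theorem~\ref{theorem4.2}, which is precisely your computation with the $q=m$ term dominating. As a small tidy-up, your first step needs no termwise differentiation of an infinite series: $(S_N^{(\alpha)}[f])^{(q)}$ is a finite sum equal to $S_{N-q}^{(\alpha+q)}[f^{(q)}]$, so Parseval applies directly to $f^{(q)}-S_{N-q}^{(\alpha+q)}[f^{(q)}]$ once $f^{(q)}\in L^2_{\omega_{\alpha+q}}$.
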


\section{Asymptotics on Hermite coefficients and convergence rates on Hermite orthogonal projections for the functions with algebraic and logarithmic regularities}
\subsection{Function with interior regularities}

Consider the function
\begin{align}\label{eq44}
f(x)=|x-z_0|^{s}\ln^{\mu}|x-z_0|g(x),\ z_0\in(-\infty,+\infty),
\end{align}
where $\mu$ is a positive integer, $s>0$, $g(x)\in C^{\infty}(-\infty,+\infty)$ such that $\int_{-\infty}^{+\infty} {\rm e}^{-x^2} f(x) \mathrm{d}x<\infty$.

\begin{theorem}\label{theorem5.1}
Let $f(x)$ be defined by \eqref{eq44}. Suppose that $f(x)$ satisfies the following conditions for $i=0,1,\ldots,k$, with $k$ being largest integer less than or equal to $s$:
\begin{align*}
\aligned
&  \lim_{x\to-\infty} {\rm e}^{-\frac{x^2}{2}} f^{(i)}(x)=0,\
\lim_{x\to+\infty} {\rm e}^{-\frac{x^2}{2}} f^{(i)}(x)=0,
\\& \int_{-\infty}^{-b} {\rm e}^{-\frac{x^2}{2}} |f^{(k+1)}(x) |\mathrm{d}x<\infty,\
\int_{b}^{\infty} {\rm e}^{-\frac{x^2}{2}} |f^{(k+1)}(x) |\mathrm{d}x<\infty,
\endaligned
\end{align*}
where $b$ is a positive constant and $b>|z_0|$. Then, as $n\to\infty$, the Hermite coefficients in \eqref{eq6} satisfy
\begin{align}\label{eq45}
|h_n|=\mathcal{O}\left(n^{-\frac{n+s}{2}-1} \ln^{\mu}(2\sqrt{n})\right).
\end{align}
\end{theorem}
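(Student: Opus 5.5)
The plan is to mirror the proof of Theorem \ref{theorem4.2}, replacing the Laguerre Rodrigues formula \eqref{eq30} by its Hermite analogue. Since $\frac{\mathrm{d}}{\mathrm{d}x}\{{\rm e}^{-x^2}H_{n-1}(x)\}=-{\rm e}^{-x^2}H_n(x)$, iterating gives
\begin{align*}
{\rm e}^{-x^2}H_n(x)=(-1)^k\frac{\mathrm{d}^k}{\mathrm{d}x^k}\Big\{{\rm e}^{-x^2}H_{n-k}(x)\Big\}.
\end{align*}

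First I write $\gamma_n h_n=\big[\int_{-\infty}^{z_0}+\int_{z_0}^{\infty}\big]{\rm e}^{-x^2}H_n(x)f(x)\,\mathrm{d}x$ and integrate by parts $k$ times on each half-line, with $k=\lfloor s\rfloor$. The boundary terms at $\pm\infty$ vanish by the hypotheses ${\rm e}^{-x^2/2}f^{(i)}(x)\to0$ together with the growth bound of Lemma \ref{lemma2.3}. The boundary terms at $z_0$ vanish because $f^{(i)}(z_0)=0$ for $i<s$. This leaves
\begin{align*}
\gamma_n h_n=(-1)^k\int_{-\infty}^{\infty}{\rm e}^{-x^2}H_{n-k}(x)f^{(k)}(x)\,\mathrm{d}x .
\end{align*}
Here, as in Theorem \ref{theorem4.2}, on each side of $z_0$ we have $f^{(k)}(x)=|x-z_0|^{s-k}\sum_{j=0}^{\mu}\ln^{\mu-j}|x-z_0|\,e_{\mu-j}(x)$ with smooth $e_{\mu-j}$. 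If $s$ is not an integer, I integrate by parts once more, as in the non-integer case of Theorem \ref{theorem4.2}. This yields the exponent $s-k-1\in(-1,0)$ and the factor $H_{n-k-1}$.

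Next I split each half-line integral at $\pm b$. The pieces over $[-b,z_0]$ and $[z_0,b]$ are handled by Theorem \ref{lemma3.7}: use \eqref{eq25} on $[-b,z_0]$, where the singularity is at the right endpoint $b=z_0$, and \eqref{eq26} on $[z_0,b]$. In both cases $\beta=s-k$ (or $s-k-1$) and the degree is $n-k$ (or $n-k-1$). This bounds each piece by $\ln^{\mu}(2\sqrt n)\,2^{n-k}\big(\lfloor\tfrac{n-k}{2}\rfloor\big)!\,n^{\max\{-\beta/2-1,-1\}}$, with the exponent shifted by $\tfrac12$ in the odd case. The tails over $|x|\ge b$ are treated as in \eqref{eq34}: one further integration by parts, then Lemma \ref{lemma2.3} combined with the integrability of ${\rm e}^{-x^2/2}|f^{(k+1)}|$. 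This gives a bound of the form $\sqrt{2^{n}n!}\,n^{\mathcal{O}(1)}$ divided by an extra factor of order $n$, which is of lower order than the local contribution.

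Finally I divide by $\gamma_n=\sqrt\pi\,2^n n!$ and compare $2^{n-k}(\lfloor (n-k)/2\rfloor)!/(2^n n!)$ using Stirling's formula. I expect this step to be the main obstacle. One has to track the factorial ratio (of size roughly $n^{-n/2}$ times powers of $n$) together with the power $n^{\max\{-(s-k)/2-1,-1\}}$ and the separate even and odd cases. The aim is to obtain the unified bound $n^{-\frac{n+s}{2}-1}\ln^{\mu}(2\sqrt n)$ in \eqref{eq45}, checking that the $k$-dependence cancels as it did in Theorem \ref{theorem4.2}. A second point needing care is verifying that the tail and $\sqrt{2^n n!}$-type terms are genuinely dominated after this normalization. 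If the exponents do not match exactly, I would first recheck the parity bookkeeping, in particular the shift from $(n-k)/2$ to $(n-k-1)/2$.
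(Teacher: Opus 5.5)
\textbf{Verdict: there is a genuine gap. The step you deferred (the Stirling normalization) cannot be completed, because the claimed bound \eqref{eq45} is false.} Apart from that, your route is the paper's: split at $z_0$, apply the Hermite Rodrigues formula with $k$ or $k+1$ integrations by parts, use Theorem~\ref{lemma3.7} on $[-b,z_0]$ and $[z_0,b]$, Lemma~\ref{lemma2.3} on the tails, and Stirling at the end. The paper's own proof fails at the same step.

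Stirling's formula gives, for the degree $N\in\{n-k,\,n-k-1\}$,
\begin{align*}
\frac{2^{N}\lfloor N/2\rfloor!}{2^{n}\,n!}\asymp\Big(\frac{\rm e}{2}\Big)^{n/2}n^{\lfloor N/2\rfloor-n},
\end{align*}
which is not the pure power $n^{\lfloor N/2\rfloor-n}$. Combined with $n^{\max\{-\beta/2-1,-1\}}$ (even $N$) or $n^{\max\{-\beta/2-1/2,-1/2\}}$ (odd $N$), the powers of $n$ do reduce to $n^{-\frac{n+s}{2}-1}$ in all four parity/integrality cases. So your expectation that $k$ cancels is correct. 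The factor $({\rm e}/2)^{n/2}$, however, grows exponentially, and no parity bookkeeping removes it.

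At best the argument yields $|h_n|=\mathcal{O}\big(({\rm e}/2)^{n/2}n^{-\frac{n+s}{2}-1}\ln^{\mu}(2\sqrt{n})\big)$. Equivalently, $\sqrt{\gamma_n}\,|h_n|=\mathcal{O}\big(n^{-\frac{s}{2}-\frac34}\ln^{\mu}(2\sqrt{n})\big)$, which is the form the $L^2_\omega$ rate in Theorem~\ref{theorem5.2} actually needs.

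\eqref{eq45} itself is false:
\begin{itemize}
\item For $f(x)=|x|$, applying $\frac{\mathrm{d}}{\mathrm{d}x}\{{\rm e}^{-x^2}H_{n-1}\}=-{\rm e}^{-x^2}H_n$ twice gives $\int_{-\infty}^{\infty}|x|\,{\rm e}^{-x^2}H_{2m}(x)\,\mathrm{d}x=2H_{2m-2}(0)$. Hence, for $n=2m$,
\begin{align*}
|h_n|=\frac{2}{\sqrt{\pi}\,2^{n}\,n(n-1)\,(\frac n2-1)!}\sim\frac{1}{\pi}\Big(\frac{\rm e}{2}\Big)^{n/2}n^{-\frac{n+1}{2}-1}.
\end{align*}
\item For $\mu\ge 1$, take $f(x)=|x|\ln|x|$, which satisfies all hypotheses with $s=k=1$. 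Then \eqref{eq45} would give $\sqrt{\gamma_n}\,|h_n|=\mathcal{O}\big((2/{\rm e})^{n/2}n^{-5/4}\ln n\big)$. The orthonormal Hermite functions and their derivatives grow only polynomially in $n$, so ${\rm e}^{-x^2/2}f$ would then be $C^\infty$, which it is not. It would also make the error in Theorem~\ref{theorem5.2} decay geometrically.
\end{itemize}
The factor is already visible in the paper's intermediate bounds: for example, $\mathcal{O}\big((2/{\rm e})^{n/2}n^{\frac{n-k}{2}-\frac12}\big)$ divided by $\gamma_n$ is of order $({\rm e}/2)^{n/2}n^{-\frac{n+k}{2}-1}$. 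The paper simply drops it at ``Using Stirling's formula''.

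Your tail step also needs repair.
\begin{itemize}
\item Unlike \eqref{eq30}, the Hermite Rodrigues formula produces no factor $1/(n-k)$. Lowering the degree by one gains only $\sqrt{2^{m-1}(m-1)!/(2^m m!)}\asymp n^{-1/2}$, so there is no ``extra factor of order $n$''.
\item For integer $s$, the boundary term at $x=\pm b$ is then of the same order as the local contribution, not lower order.
\item The remaining tail integral, bounded with the global estimate of Lemma~\ref{lemma2.3} (exponent $-1/12$), exceeds the target by $n^{1/6}$.
\item For non-integer $s$, ``one further integration by parts'' needs $f^{(k+2)}$, which the hypotheses do not control. Without it, the tail bound exceeds the target by at least $n^{(s-k)/2}$.
\end{itemize}
So even the corrected bound needs a finer tail argument. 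One option is the $n^{-1/4}$ bound on $b\le|x|\le\sqrt{(2-\eta)n}$, together with additional decay assumptions on $f^{(k+1)}$ beyond the turning point.
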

\begin{proof}
We begin by splitting $h_{n}$ as follows
\begin{align*}
h_n =\frac{1}{\gamma_n} \left[\int_{-\infty}^{z_0}+\int_{z_0}^{+\infty}\right] {\rm e}^{-x^2}  f(x) H_{n}(x) \mathrm{d}x=:I_{1}+I_{2}.
\end{align*}
We now estimate the asymptotic behavior of these two integrals. 

\textbf{Case (i):} For $x\in(-\infty,z_0]$, the $k$-th derivative of $f(x)$ can be written as
\begin{align*}
f^{(k)}(x)=(z_0-x)^{s-k}\sum_{j=0}^{\mu}\ln^{\mu-j}(z_0-x)q_{\mu-j}(x),
\end{align*}
where $q_{\mu-j}(x)\in C^{\infty}(-\infty,z_0]$. Recalling the Rodrigues' formula for Hermite polynomials
\begin{align*}
  \mathrm{e}^{-x^2}H_{n}(x)=(-1)^{k}\frac{\mathrm{d}^{k}}{\mathrm{d}x^{k}}\Big\{\mathrm{e}^{-x^2}H_{n-k}(x)\Big\},
\end{align*}
and performing repeated integration by parts, we obtain
\begin{align}\label{eq46}
\aligned
 I_1&=\frac{1}{\gamma_n} \int_{-\infty}^{z_0} f(x){\rm e}^{-x^2}  H_{n}(x) \mathrm{d}x
 \\&=\frac{1}{\gamma_n} \int_{-\infty}^{z_0} f^{\prime}(x){\rm e}^{-x^2}H_{n-1}(x)\mathrm{d}x
 \\&=\cdots
 \\&=\frac{1}{\gamma_n} \int_{-\infty}^{z_0} f^{(k)}(x){\rm e}^{-x^2}  H_{n-k}(x) \mathrm{d}x.
 \endaligned
\end{align}

If $s\in\mathbb{N}^{+}$, then $k=s$. The above integral can be split as
\begin{align}\label{eq47}
  \int_{-\infty}^{z_0} f^{(k)}(x){\rm e}^{-x^2}  H_{n-k}(x) \mathrm{d}x = \int_{-b}^{z_0} f^{(k)}(x)  {\rm e}^{-x^2}  H_{n-k}(x) \mathrm{d}x +\int_{-\infty}^{-b} f^{(k)}(x)  {\rm e}^{-x^2}  H_{n-k}(x)  \mathrm{d}x,
\end{align}
where $b>|z_{0}|$ is a constant. Substituting the expression for $f^{(k)}(x)$, the first integral in Equation \eqref{eq47} can be estimated as
\begin{align*}
\aligned
 &\int_{-b}^{z_0} f^{(k)}(x)  {\rm e}^{-x^2}  H_{n-k}(x) \mathrm{d}x
 \\&=\sum_{j=0}^{\mu}\int_{-b}^{z_0} \ln^{\mu-j}(z_0-x) {\rm e}^{-x^2}  H_{n-k}(x)q_{\mu-j}(x) \mathrm{d}x
 \\&=
 \left\{
 \aligned
 &\mathcal{O} \left(2^{n} \Big(\frac {n-k}{ 2}\Big) !n^{-1} \ln^{\mu}(2\sqrt{n})\right),
 &n-k\ \textrm{is even},
 \\&\mathcal{O} \left(2^{n} \Big(\frac {n-k-1}{ 2}\Big) !n^{-\frac 1 2} \ln^{\mu}(2\sqrt{n})\right),
 &n-k\ \textrm{is odd}.
 \endaligned
  \right.
 \endaligned
\end{align*}
For the second integral in \eqref{eq47}, we apply integration by parts and obtain
\begin{align}\label{eq48}
\aligned
&\int_{-\infty}^{-b} f^{(k)}(x)  {\rm e}^{-x^2}  H_{n-k}(x) \mathrm{d}x
\\&=-f^{(k)}(x) {\rm e}^{-x^2} H_{n-k-1}(x) \big |_{x=-b}
+\int_{-\infty}^{-b} f^{(k+1)}(x) {\rm e}^{-x^2} H_{n-k-1}(x) \mathrm{d}x.
\endaligned
\end{align}
According to Equation \eqref{eq11}, the two terms on the right-hand side of Equation \eqref{eq48} can be estimated as
\begin{align*}
  \Big|f^{(k)}(b) {\rm e}^{-b^2} H_{n-k-1}(b)\Big|=\mathcal{O}\left( \left(\frac {2}{{\rm e}} \right)^{\frac n2} n^{\frac{n-k}{2}-\frac{1}{2}} \right),
\end{align*}
and
\begin{align*}
 \int_{-\infty}^{-b} f^{(k+1)}(x)  {\rm e}^{-x^2}  H_{n-k-1}(x) \mathrm{d}x
=\mathcal{O}\left( \left(\frac {2}{{\rm e}} \right)^{\frac n2} n^{\frac{n-k}{2}-\frac{1}{2}} \right).
\end{align*}
Using Stirling's formula, and applying Equation \eqref{eq6}, we obtain
\begin{align*}
 I_1=\mathcal{O}\left( n^{-\frac{n+s}{2}-1} \ln^{\mu}(2\sqrt{n})\right).
\end{align*}

If $s>0$ is not an integer, we apply integration by parts again to $I_{1}$, yielding
\begin{align*}
 I_1=\frac{1}{\gamma_n} \int_{-\infty}^{z_0} f^{(k+1)}(x)  {\rm e}^{-x^2}  H_{n-k-1}(x) \mathrm{d}x.
\end{align*}
Here, $f^{(k+1)}(x)$ can be expressed as
\begin{align*}
f^{(k+1)}(x)=(z_0-x)^{s-k-1}\sum_{j=0}^{\mu}\ln^{\mu-j}(z_0-x)\tilde{q}_{\mu-j}(x),
\end{align*}
where $\tilde{q}_{\mu-j} \in C^{\infty}(-\infty,z_0]$. The integral can then be split again as
\begin{align}\label{eq49}
\aligned
 &\int_{-\infty}^{z_0} f^{(k+1)}(x)  {\rm e}^{-x^2}  H_{n-k-1}(x) \mathrm{d}x
\\&=\int_{-b}^{z_0} f^{(k+1)}(x)  {\rm e}^{-x^2}  H_{n-k-1}(x) \mathrm{d}x
+\int_{-\infty}^{-b} f^{(k+1)}(x)  {\rm e}^{-x^2}  H_{n-k-1}(x) \mathrm{d}x.
 \endaligned
\end{align}
Substituting the expression for $f^{(k+1)}(x)$, we obtain
\begin{align*}
\aligned
 &\int_{-b}^{z_0} f^{(k+1)}(x)  {\rm e}^{-x^2}  H_{n-k-1}(x) \mathrm{d}x
 \\&=\sum_{j=0}^{\mu}\int_{-b}^{z_0} (z_0-x)^{s-k-1} \ln^{\mu-j}(z_0-x) {\rm e}^{-x^2}  H_{n-k-1}(x)\tilde{q}_{\mu-j}(x) \mathrm{d}x
 \\&=
 \left\{
 \aligned
 &\mathcal{O}\left(2^{n} \Big(\frac {n-k-1}{ 2}\Big) !n^{\frac{k-s-1}{2}} \ln^{\mu}(2\sqrt{n})\right),
 &n-k-1\ \textrm{is even},
 \\&\mathcal{O}\left(2^{n} \Big(\frac {n-k-2}{ 2}\Big) !n^{\frac{k-s}{2}} \ln^{\mu}(2\sqrt{n})\right),
 &n-k-1\ \textrm{is odd}.
 \endaligned
  \right.
 \endaligned
\end{align*}
Similar to Equation \eqref{eq48}, the second integral in \eqref{eq49} can be estimated as
\begin{align*}
\int_{-\infty}^{-b} f^{(k+1)}(x)  {\rm e}^{-x^2}  H_{n-k-1}(x) \mathrm{d}x
=\mathcal{O}\left( \left(\frac {2}{{\rm e}}\right)^{\frac n 2} n^{\frac{n-k}{2}-1} \right).
\end{align*}
Using the Stirling formula and Equation \eqref{eq6}, we conclude that
\begin{align*}
 I_1=\mathcal{O}\left( n^{-\frac{n+s}{2}-1} \ln^{\mu}(2\sqrt{n})\right).
\end{align*}

\textbf{Case (ii):} For $x\in[z_0,+\infty)$, then
\begin{align*}
f^{(k)}(x)=(x-z_0)^{s-k}\sum_{j=0}^{\mu}\ln^{\mu-j}(x-z_0)r_{\mu-j}(x),
\end{align*}
where $r_{\mu-j}(x)\in C^{\infty}[z_0,\infty)$. By Rodrigues' formula, we get
\begin{align*}
 I_2=\frac{1}{\gamma_n} \int_{z_0}^{\infty} f^{(k)}(x) {\rm e}^{-x^2}  H_{n-k}(x) \mathrm{d}x.
\end{align*}
Following a similar approach to Case (i), we derive the desired result in Equation \eqref{eq45}.
\end{proof}

\begin{example}\rm{}
Figure \ref{fig:graph5} illustrates the decay rates of $\log_{n}| h_n|$ for the function $f(x)=|x-z_0|^{s}\ln^{\mu}|x-z_0|$ with various values of $s$ and $\mu$. As shown in the graph, the observed asymptotic behavior aligns with the predictions made in Theorem \ref{theorem5.1}.
\end{example}

\begin{figure}[H]
  \centering
  \includegraphics[scale=0.45]{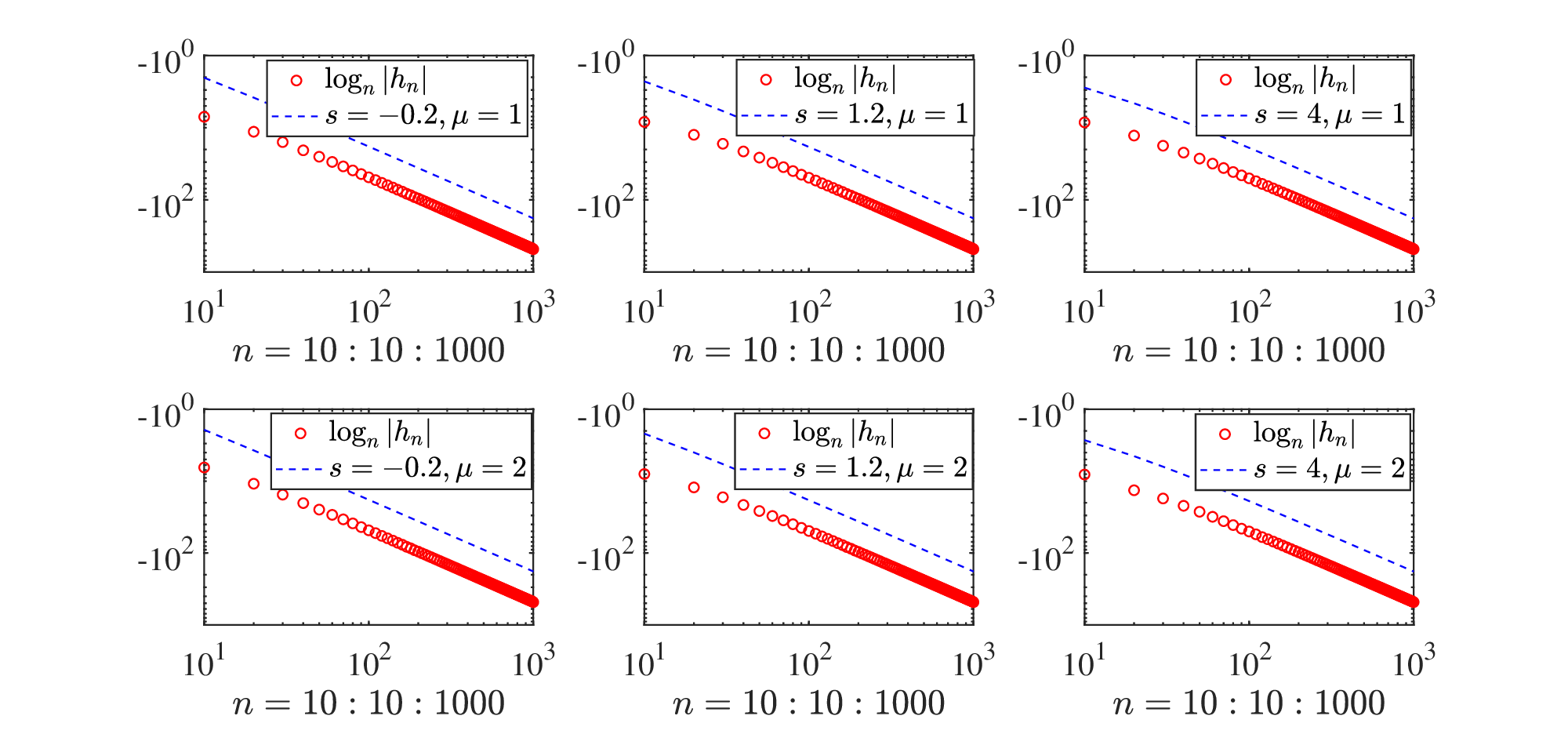}
  \caption{The asymptotic estimates of $\log_n | h_n|$ for $f(x)=|x-3|^{s}\ln^{\mu}|x-3|$. In all figures, the blue dashed lines represent the asymptotic orders provided in Theorem \ref{theorem5.1}.}
  \label{fig:graph5}
\end{figure}

\subsection{The convergence rates on the Hermite orthogonal projections}
The asymptotic behavior of the Hermite spectral expansion coefficients for functions with algebraic and logarithmic regularities at the interior, as described in Theorem \ref{theorem5.1}, enables us to determine the convergence rate of the Hermite orthogonal projection for a given function.

\begin{theorem}\label{theorem5.2}
Let $f(x)$ be defined by \eqref{eq44} and satisfies the assumptions given in Theorem \ref{theorem5.1}. Then, as {$N\gg 1$}, the Hermite expansion satisfies that
\begin{align*}
\aligned
&\|f(x)-S_N[f](x)\|_{L_{\omega}^2(R)}=
\mathcal{O} \left( N^{ -\frac{s}{2}-\frac 1 4} \ln^{\mu}(2\sqrt{N})\right).
\\& \left\|{\rm e}^{-x^2/2} \left(f-S_{N}[f] \right) \right\|_{L^{\infty}(R)}=\mathcal{O} \left( N^{ -\frac{s}{2}} \ln^{\mu}(2\sqrt{N})\right).
\endaligned
\end{align*}
\end{theorem}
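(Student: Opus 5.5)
The plan is to reduce both estimates to a bound on the \emph{normalized} Hermite coefficients. I would use the expansion $f-S_N[f]=\sum_{n>N}h_nH_n$, the Parseval identity \eqref{eq7} for the $L^2_\omega$ norm, and the weighted maximum bounds of Lemma \ref{lemma2.3} for the $L^\infty$ norm.

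\textbf{Step 1 (normalized coefficient bound).} First I would return to the two cases in the proof of Theorem \ref{theorem5.1} and combine them with Stirling's formula. The relevant ratios are
\begin{align*}
\Big(\tfrac{n-k}{2}\Big)!\big/\sqrt{\gamma_n}\sim 2^{-n/2}n^{1/4-k/2}\quad\text{and}\quad 2^n\big/\sqrt{\gamma_n}\cdot 2^{-n/2}=\mathcal{O}\big(1/\sqrt{n!}\big).
\end{align*}
Together with the integral bounds obtained there (the factors $n^{-1}$ and $n^{(k-s-1)/2}$ from Theorem \ref{lemma3.7}, and the exponentially small tails on $|x|\ge b$), these give
\begin{align*}
|h_n|\sqrt{\gamma_n}=\mathcal{O}\Big(n^{-\frac s2-\frac34}\ln^{\mu}(2\sqrt n)\Big).
\end{align*}
This is the form of \eqref{eq45} that is actually needed; the stated \eqref{eq45} is shorthand for it after Stirling. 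I would check the parity cases (even/odd $n-k$) separately, since they differ only by a factor of $n^{\pm1/2}$ against $((n-k-1)/2)!$ versus $((n-k)/2)!$, and these cancel.

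\textbf{Step 2 ($L^2_\omega$ rate).} By \eqref{eq7},
\begin{align*}
\|f-S_N[f]\|^2_{L^2_\omega}=\sum_{n>N}h_n^2\gamma_n=\mathcal{O}\Big(\sum_{n>N}n^{-s-\frac32}\ln^{2\mu}(2\sqrt n)\Big)=\mathcal{O}\Big(N^{-s-\frac12}\ln^{2\mu}(2\sqrt N)\Big).
\end{align*}
The last step compares the sum with an integral, which converges because $s>0$. Taking square roots gives the first claim.

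\textbf{Step 3 ($L^\infty$ rate), the main obstacle.} I would write
\begin{align*}
{\rm e}^{-x^2/2}(f-S_N[f])=\sum_{n>N}h_n\sqrt{\gamma_n}\cdot\frac{{\rm e}^{-x^2/2}H_n(x)}{\sqrt{\gamma_n}}.
\end{align*}
In the oscillatory region $|x|\le\sqrt{(2-\eta)n}$, Lemma \ref{lemma2.3} with $\lambda=0$ (together with the Hilb-type formula \eqref{eq9} via \eqref{eq8} for $|x|\le a$) gives $\frac{{\rm e}^{-x^2/2}|H_n(x)|}{\sqrt{\gamma_n}}=\mathcal{O}(n^{-1/4})$. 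Summing $n^{-s/2-1}\ln^\mu(2\sqrt n)$ over $n>N$ then yields $N^{-s/2}\ln^\mu(2\sqrt N)$. The difficulty is that the global bound in Lemma \ref{lemma2.3} is only $n^{-1/12}$, and using it for all $n$ would lose $N^{1/6}$. To get around this, I would fix $x$ and split the $n$-sum into three ranges:
\begin{itemize}
\item $n\ge x^2/(2-\eta)$, where $x$ lies in the oscillatory region of $H_n$ and the $n^{-1/4}$ bound applies;
\item $n<x^2/2-Cx^{2/3}$, where $x$ lies beyond the turning point and ${\rm e}^{-x^2/2}H_n/\sqrt{\gamma_n}$ is exponentially small;
\item the transition window $|n-x^2/2|\lesssim x^{2/3}$.
\end{itemize}
The window contains $\mathcal{O}(n^{1/3})$ indices of size $n\sim x^2/2\ge N$. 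I would first try the Plancherel--Rotach (Airy-type) refinement ${\rm e}^{-x^2/2}|H_n|/\sqrt{\gamma_n}\lesssim n^{-1/12}(1+n^{1/6}|\sqrt{2n}-|x||)^{-1/4}$. Summed over the window, this gives at most $\mathcal{O}(n^{-s/2-3/4}\cdot n^{-1/12}\cdot n^{1/3}\cdot n^{-1/12})=\mathcal{O}(n^{-s/2-7/12})$ with $n\gtrsim N$, which is $o(N^{-s/2})$. Hence the window does not spoil the rate, and combining the three ranges gives the second estimate.
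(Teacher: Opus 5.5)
Your Steps 1--2 follow the paper's route: Parseval \eqref{eq7}, Theorem \ref{theorem5.1}, and comparison of $\sum_{n>N}$ with an integral. Recasting Theorem \ref{theorem5.1} as $|h_n|\sqrt{\gamma_n}=\mathcal{O}(n^{-s/2-3/4}\ln^{\mu}(2\sqrt{n}))$ is exactly what is needed. However, \eqref{eq45} is not shorthand for this bound but a misprint: it lacks a factor $({\rm e}/2)^{n/2}$, and read literally it would force exponential decay of $h_n\sqrt{\gamma_n}$. Two slips in Step 1 leave its conclusion intact. First, your Stirling ratio is correct with $\sqrt{n!}$ in the denominator; with $\sqrt{\gamma_n}$ it is off by a factor $2^{n/2}$. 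Second, the pieces over $|x|\ge b$ are not exponentially small. Relative to $\sqrt{\gamma_{n-k}}$ they are only polynomially small, because they carry the artificial endpoint contribution at $\pm b$, of relative size $n^{-3/4}$. That is the same order as the main part when $s$ is an integer.

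The genuine gap is in Step 3. You are right that the paper's bare appeal to Lemma \ref{lemma2.3} at best gives $N^{-s/2+1/6}$, so a transition-region estimate beyond the paper is required. But your three ranges do not exhaust $n>N$. The indices $x^2/2+Cx^{2/3}<n<x^2/(2-\eta)$, i.e.\ those with $\sqrt{(2-\eta)n}<|x|<\sqrt{2n}-c\,n^{-1/6}$, belong to none of them. There are $\asymp\eta x^2$ such indices, all of size $\asymp x^2$. This range, not the $x^{2/3}$-window, is where the $N^{1/6}$ loss actually sits. The fixed-$\eta$ bulk bound does not apply there, and the global bound $n^{-1/12}$ only gives $x^2\cdot|x|^{-s-3/2}\cdot|x|^{-1/6}=|x|^{-s+1/3}$, i.e.\ $N^{-s/2+1/6}$ when $x^2\asymp N$.

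Your own Airy-type bound closes the gap. Write $n=x^2/2+m$; then $\sqrt{2n}-|x|\asymp m/|x|$ and $n^{1/6}\asymp|x|^{1/3}$, so ${\rm e}^{-x^2/2}|H_n(x)|/\sqrt{\gamma_n}\lesssim m^{-1/4}$. Hence $\sum_{m\lesssim x^2}|x|^{-s-3/2}m^{-1/4}\ln^{\mu}(2|x|)\lesssim|x|^{-s}\ln^{\mu}(2|x|)\lesssim N^{-s/2}\ln^{\mu}(2\sqrt{N})$, since this range meets $n>N$ only if $x^2\gtrsim N$. This contribution is of the same order as the main term, so it must be estimated, not dismissed.

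A cleaner route is the uniform Askey--Wainger bound ${\rm e}^{-x^2/2}|H_n(x)|/\sqrt{\gamma_n}\lesssim\big((2n+1)^{1/3}+|x^2-2n-1|\big)^{-1/4}$, valid for all real $x$. Then split only according to whether $|x^2-2n-1|\ge n$ (bound $n^{-1/4}$) or $|x^2-2n-1|<n$ (then $n\asymp x^2$ and the sum is $\lesssim|x|^{-s}\ln^{\mu}(2|x|)$).

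Two minor points remain. On the window the factor $(1+n^{1/6}|\sqrt{2n}-|x||)^{-1/4}$ is $\asymp1$, so the count gives $n^{-s/2-3/4}\cdot n^{-1/12}\cdot n^{1/3}=n^{-s/2-1/2}$; your extra $n^{-1/12}$ is spurious but harmless. Near the edge of your second range the normalized Hermite function is of size $n^{-1/12}$, not exponentially small. That range therefore also needs the Airy decay in the scaled variable before summing.
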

\begin{proof}
By Equation \eqref{eq7}, we have
{
\begin{align*}
\|f(x)-S_N[f](x)\|_{L_{\omega}^2(R)}
&=\left[{\sum_{n=N+1}^{\infty} h_n^2  \gamma_{n}}\right]^{\frac 12},
\end{align*}}
which directly leads to the desired result by Theorem \ref{theorem5.1} and the method of integration by parts.

Similarly, from \eqref{eq7}, we obtain
\begin{align*}
{\rm e}^{-x^2/2}  \left(f-S_{N}[f] \right)
={\sum_{n=N+1}^{\infty} h_n  \left({\rm e}^{-x^2/2}  H_{n}\right)},
\end{align*}
which again leads to the desired result by applying Lemma \ref{lemma2.3}, Theorem \ref{theorem5.1}, and integration by parts.
\end{proof}

\begin{example}\rm{}
Figure \ref{fig:graph6} shows the convergence rates of the weighted Hermite truncation errors $\|f(x)-S_N[f](x)\|_{L_{\omega}^2(R)}$ for the function $f(x)=|x-z_0|^{s}\ln^{\mu}|x-z_0|$ with various values of $s$ and $\mu$. As can be seen from the graph, these asymptotic orders are consistent with the results of Theorem \ref{theorem5.2}.
\end{example}

\begin{figure}[H]
  \centering
  \includegraphics[scale=0.45]{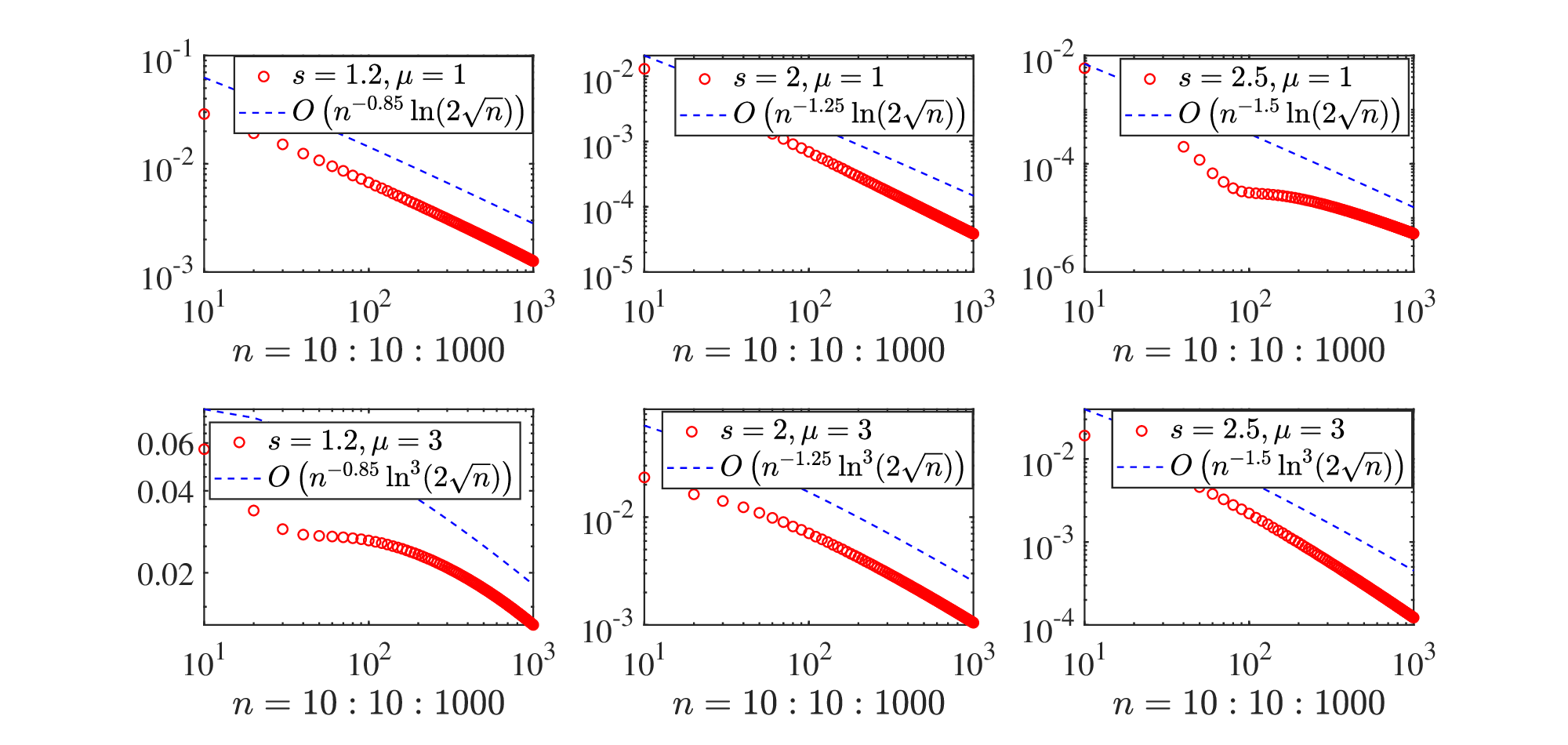}
  \caption{The asymptotic estimates of $\|f(x)-S_N[f](x)\|_{L_{\omega}^2(R)}$ for $f(x)=|x-3|^{s}\ln^{\mu}|x-3|$. In all figures, the blue dashed lines represent the asymptotic orders provided in Theorem \ref{theorem5.2}.}
  \label{fig:graph6}
\end{figure}

Moreover, {for} the Hermite-weighted Sobolev space $W^{m}(R)$ \cite{brezis2011functional} with any integer $m\geq 0$, the weighted norm of $W^{m}(R)$ is defined by
\begin{align*}
\|v\|_{W^{m}(R)}=\left\{\sum_{p=0}^m \int_{-\infty}^{+\infty}
{\rm e}^{-x^2} [v^{(p)}(x)]^2 \mathrm{d}x \right\}^{\frac{1}{2}},
\end{align*}
define
 \begin{align*}
f^{(p)}(x)=\sum_{n=0}^{\infty}h_n^{(p)} H_n(x),\ p=0,1,...,m
\end{align*}
with
\begin{align*}
 h_n^{(p)} =\frac{1}{\gamma_n} \int_{-\infty}^{+\infty}  f^{(p)}(x) H_{n}(x){\rm e}^{-x^2} \mathrm{d}x.
\end{align*}
From \eqref{eq6} and \eqref{eq46}, we get
\begin{align*}
\aligned
 h_n^{(p)} &=\frac{1}{\gamma_n} \int_{-\infty}^{+\infty}  f^{(p)}(x) {\rm e}^{-x^2} H_{n}(x) \mathrm{d}x
 \\&=\frac{1 }{ \gamma_n }
 \int_{-\infty}^{+\infty}  f(x) {\rm e}^{-x^2} H_{n+p}(x)  \mathrm{d}x
    \\&=h_{n+p} \mathcal{O}(1),
 \endaligned
\end{align*}
which together with Theorems \ref{theorem5.1} and \ref{theorem5.2}, we can obtain the following convergence rates.

\begin{corollary}
Let $f(x)$ be defined by \eqref{eq44} and satisfy the assumptions in Theorem \ref{theorem5.1}. Then for {$N\gg 1$}, the Hermite expansion follows that
\begin{align*}
\|f(x)-S_N[f](x)\|_{W^{m}(R)}=
\mathcal{O} \left(N^{ \frac{m-s}{2}-\frac 14} \ln^{\mu}(2\sqrt{N})\right).
\end{align*}
\end{corollary}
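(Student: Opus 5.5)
The Sobolev-norm estimate will follow by reducing it to the $L^2_\omega$ tail sums of the Hermite coefficients of each derivative. I would follow the same pattern as the Laguerre corollaries. The first step is to check that differentiation commutes with truncation in the Hermite basis. Since $H_n'(x)=2nH_{n-1}(x)$, we have
\begin{align*}
\frac{\mathrm{d}^p}{\mathrm{d}x^p}S_N[f](x)=\sum_{n=p}^{N} h_n \frac{(2n)!!}{(2n-2p)!!}H_{n-p}(x)=S_{N-p}[f^{(p)}](x).
\end{align*}
The last equality uses the identity $h_n^{(p)}=\frac{\gamma_{n+p}}{\gamma_n}h_{n+p}$, up to the sign conventions fixed in \eqref{eq46}, which follows from the displayed computation before the corollary. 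Therefore
\begin{align*}
\|f-S_N[f]\|_{W^m(R)}^2=\sum_{p=0}^{m}\big\|f^{(p)}-S_{N-p}[f^{(p)}]\big\|^2_{L^2_\omega(R)}
=\sum_{p=0}^{m}\sum_{n=N-p+1}^{\infty}\big(h_n^{(p)}\big)^2\gamma_n,
\end{align*}
by Parseval \eqref{eq7} applied to each $f^{(p)}$. This step requires $f^{(p)}\in L^2_\omega$ for $p\le m$, which holds when $s>m-\tfrac12$. I would add this as the implicit hypothesis, analogous to $\gamma>m-\tfrac12$ in the Laguerre corollary.

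Next I would substitute $h_n^{(p)}=\frac{\gamma_{n+p}}{\gamma_n}h_{n+p}$. Each tail then becomes
\begin{align*}
\sum_{n>N-p}\frac{\gamma_{n+p}^2}{\gamma_n}h_{n+p}^2=\sum_{l>N}\frac{\gamma_l}{\gamma_{l-p}}\,h_l^2\gamma_l,\qquad \frac{\gamma_l}{\gamma_{l-p}}=2^p\frac{l!}{(l-p)!}=\mathcal{O}(l^{p}).
\end{align*}
So the $p$-th term is a tail sum of $h_l^2\gamma_l$ weighted by $l^p$. The dominant contribution comes from $p=m$.

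The key input is the size of $h_l^2\gamma_l$, and here the stated formula \eqref{eq45} is suspicious: the exponent $-\frac{n+s}{2}$ is clearly a typo. What Theorem \ref{theorem5.1} actually yields, after the Stirling step, is $h_l^2\gamma_l=\mathcal{O}(l^{-s-3/2}\ln^{2\mu}(2\sqrt l))$. This is exactly the rate consistent with Theorem \ref{theorem5.2}, since $\sum_{l>N}l^{-s-3/2}\sim N^{-s-1/2}$. I would extract this normalized bound directly from the proof of Theorem \ref{theorem5.1}, which shows $\sqrt{\gamma_l}\,|h_l|=\mathcal{O}(l^{-s/2-3/4}\ln^\mu)$.

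With that bound the rest is routine. The $p$-th term is
\begin{align*}
\sum_{l>N}l^{p-s-3/2}\ln^{2\mu}(2\sqrt l)=\mathcal{O}\big(N^{p-s-1/2}\ln^{2\mu}(2\sqrt N)\big),
\end{align*}
by comparison with an integral, which converges because $p-s-\tfrac32<-1$. Summing over $p\le m$ and taking square roots gives $\mathcal{O}(N^{\frac{m-s}{2}-\frac14}\ln^\mu(2\sqrt N))$.

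The main obstacle is this normalization step. The argument must track the factorial and power-of-two factors from \eqref{eq25}–\eqref{eq26} against $\gamma_n=\sqrt\pi 2^n n!$ through Stirling's formula, separately for the two parities, so that the per-coefficient bound uniformly has exponent $-s/2-3/4$. I would handle this first, reusing the even/odd bounds from Theorem \ref{theorem5.1}.
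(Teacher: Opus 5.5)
Your proposal is correct and follows the paper's route: write $h_n^{(p)}=\frac{1}{\gamma_n}\int_{-\infty}^{+\infty} f(x)\,{\rm e}^{-x^2}H_{n+p}(x)\,\mathrm{d}x$ and sum the tails using Theorem~\ref{theorem5.1}, and your bookkeeping is more accurate than the printed argument, since the factor is $\gamma_{n+p}/\gamma_n=2^p(n+p)!/n!=\mathcal{O}(n^p)$ rather than the paper's $\mathcal{O}(1)$, and this growth is exactly what produces the loss $N^{m/2}$ in the stated rate. One correction to your commentary: the power $n^{-\frac{n+s}{2}-1}$ in \eqref{eq45} is what Stirling's formula gives, and what the printed bound actually drops is the factor $({\rm e}/2)^{n/2}$; with it restored, \eqref{eq45} is equivalent to your normalized bound $\sqrt{\gamma_n}\,|h_n|=\mathcal{O}\big(n^{-s/2-3/4}\ln^{\mu}(2\sqrt{n})\big)$.
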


\section{Conclusions}

This paper investigates the optimal asymptotic behavior of the Laguerre and Hermite spectral expansion coefficients for functions with algebraic and logarithmic regularities. By applying the Hilb-type formula and deriving precise asymptotic estimates for integrals involving Laguerre and Hermite polynomials, we establish the optimal decay rates of the coefficients. These results are then used to characterize the decay of the corresponding Laguerre and Hermite spectral orthogonal projections. Numerical experiments are performed to validate the theoretical findings, demonstrating their accuracy and applicability. These results provide valuable insights into the spectral approximation of functions with regularities, paving the way for more efficient numerical methods in computational mathematics.


\vspace{2mm}

\section*{Conflict of Interest}

The authors declare that they have no conflict of interest.

\section*{Data Availability}

The code used in this work will be made available upon request to the authors.

\section*{References}


\end{document}